\documentclass[11pt,twoside]{article}

\usepackage[english]{babel}

\usepackage{amsmath}
\usepackage{amssymb}

\usepackage{mathrsfs}
\usepackage{amsthm}

\usepackage{indentfirst}
\usepackage{color}
\usepackage{txfonts}

\usepackage{anysize}

\usepackage[colorlinks=true,
  linkcolor=blue,
  citecolor=red,
  urlcolor=magenta,
  ]{hyperref}

\allowdisplaybreaks

\newtheorem{theorem}{Theorem}[section]
\newtheorem{lemma}[theorem]{Lemma}
\newtheorem{corollary}[theorem]{Corollary}
\newtheorem{proposition}[theorem]{Proposition}

\theoremstyle{definition}
\newtheorem{remark}[theorem]{Remark}
\newtheorem{definition}[theorem]{Definition}

\numberwithin{equation}{section}

\begin{document}

\title{\bf\Large On Two Questions by Brezis et al Concerning
the Critical Difference Quotient Characterization of First-Order Sobolev Spaces\footnotetext{\hspace{-0.35cm} 2020 {\it
Mathematics Subject Classification}. Primary 46E35; Secondary 26D10,
42B25, 26A33, 42B35. \endgraf
{\it Key words and phrases}.
Brezis--Seeger--Van~Schaftingen--Yung formula, Hardy--Sobolev space,
atom, embedding, normability.
\endgraf
This project is partially supported by the National Natural Science Foundation of China (Grant Nos. 12431006 and
12371093), the Beijing Natural Science Foundation (Grant No. 1262011), and the Fundamental Research Funds for the
Central Universities (Grant No. 2253200028).
}}
\date{}
\author{Yiqun Chen, Dachun Yang, Wen Yuan and
Yangyang Zhang\footnote{Corresponding author,
E-mail: \texttt{yangyzhang@mail.bnu.edu.cn}/{\color{red}\today}/Final version.}}
\maketitle

\vspace{-0.8cm}

\begin{center}
\begin{minipage}{13cm}
{\small {\bf Abstract:}\quad
Let $N\in\mathbb N$ and $\gamma\in[-1,0)$.
In [Anal. PDE 17 (2024)], Brezis, Seeger,
Van~Schaftingen, and Yung asked how, in the exceptional range
$\gamma\in[-1,0)$, $\dot{\mathrm{BV}}(\gamma)$ and $\dot W^{1,1}(\gamma)$
on ${\mathbb R}^N$ are related to other
function spaces, especially to Hardy--Sobolev spaces,
and whether these spaces are normable. In this article,
we prove that the homogeneous Hardy--Sobolev space is
strictly embedded, respectively, into $\dot{\mathrm{BV}}(\gamma)$
and $\dot W^{1,1}(\gamma)$, and
neither $\dot{W}^{1,1}(\gamma)/\mathbb R$ nor $\dot{BV}(\gamma)/\mathbb R$
is normable, which answers the above two questions.
}
\end{minipage}
\end{center}




\section{Introduction}

Classical and fractional Sobolev spaces play a central role in harmonic
analysis and partial differential equations (see, for example,
\cite{BBD,BM,Evans10,KLV,KT,L17,L23,SW}). In this field, a question of concern
is how to represent gradients via difference quotients;
see, for example,
\cite{BourgainBrezisMironescu01,Brezis02,BSSVY-revisited,BVSY,
DominguezSeegerStreetVSY,KMX,MX,ZhuYangYuanExt}.

One surprising answer to this question was given by Brezis et al.
\cite{BVSY}, in which they showed that the $L^p$ norm of the
gradient can be recovered from
difference quotients by using a suitable weak $L^p$ quasi-norm on
$\mathbb R^N\times\mathbb R^N$.
This was further developed by Brezis, Seeger, Van~Schaftingen and Yung
\cite{BSSVY}, who introduced a one-parameter family of formulae
characterizing $\|\nabla f\|_{L^p}$ through the level sets of suitable
difference quotients of $f$; see also \cite{BSSVY-revisited}.
More precisely, for $p\in(1,\infty)$ and
$\gamma\in\mathbb R\setminus\{0\}$ or for $p=1$ and
$\gamma\in\mathbb R\setminus[-1,0]$, Brezis et al. \cite{BSSVY}
proved that, for any $f\in\dot W^{1,p}(\mathbb R^N)$,
\begin{align}\label{bsvy}
\|\nabla f\|_{L^p(\mathbb R^N)}
\sim
\sup_{\lambda\in (0,\infty)}
\lambda
\left[
\iint\limits_{\genfrac{}{}{0pt}{}{x,y\in\mathbb R^N,\ x\ne y}
{|f(x)-f(y)|>\lambda |x-y|^{1+{\gamma}/{p}}}}
|x-y|^{\gamma-N}\,dx\,dy
\right]^{\frac1p},
\end{align}
where the positive equivalence constants are independent of
$f$. These formulae are simply called the BSVY formulae and the quantity
of the right-hand side of \eqref{bsvy} is called the
\emph{BSVY quasi-norm} of $f$.
Further developments of BSVY formulae and related difference-quotient
methods have been given in several directions. Variants related to
Bourgain--Brezis--Mironescu type formulae, fractional Sobolev spaces,
and interpolation methods can be found in
\cite{DominguezMilmanBBM,Milman05,Mohanta24}.
Related extensions and variants, including results in metric measure spaces
and in ball Banach function spaces, were obtained in
\cite{DaiGrafakosPanYangYuanZhang24,dlyyz,dlyyz2,DaiLinYangYuanZhang22,DaiLinYangYuanZhang,zlyyz,ZhuYangYuanExt}.
For further developments in this area, we refer to \cite{DominguezMilman,lyyzzs,Ludwig14,N25,pssy,Poliakovsky22}.

In \cite[Subsection~7A]{BSSVY}, Brezis et al.
wrote ``It is natural to ask how in
the range $-1\le \gamma<0$ the proper subspaces
$\dot{\mathrm{BV}}(\gamma)$ and $\dot W^{1,1}(\gamma)$ relate to other
families of function spaces, in particular to the Hardy--Sobolev space
$\dot F^1_{1,2}$, another subspace of $\dot W^{1,1}$.''. They also wrote
``Are these spaces normable in the range $\gamma\in[-1,0)$?''.

The present article answer the above both questions. Specifically,
for every $N\in\mathbb N$ and every $\gamma\in[-1,0)$, we prove that
the Hardy--Sobolev space is strictly embedded, respectively,
into $\dot{\mathrm{BV}}(\gamma)$ and $\dot W^{1,1}(\gamma)$,
and neither $\dot{W}^{1,1}(\gamma)/\mathbb R$ nor
$\dot{BV}(\gamma)/\mathbb R$ is normable.

To precisely describe the main theorems,
we first introduce some notation.
Let $\mathbb N:=\{1,2,\ldots\}$ and $N\in\mathbb N$.
We denote by the \emph{symbol $\mathscr M(\mathbb R^N)$}
the set of all measurable functions on $\mathbb R^N$.
For any $b\in\mathbb R$, $x,y\in\mathbb R^N$ with $x\ne y$,
and $u\in\mathscr M(\mathbb R^N)$, let
$$
Q^{(N)}_b u(x,y)
:=
\frac{u(x)-u(y)}{|x-y|^{1+b}},
$$
and, for any $\lambda\in(0,\infty)$,
$$
E^{(N)}_{\lambda,b}[u]
:=
\left\{
(x,y)\in\mathbb R^N\times\mathbb R^N:x\ne y,\
|Q^{(N)}_b u(x,y)|>\lambda
\right\}.
$$
For any $\gamma\in\mathbb R$ and any measurable set
$E\subset\mathbb R^N\times\mathbb R^N$, we define
$$
\nu_{N,\gamma}(E)
:=
\iint_E \frac{1}{|x-y|^{N-\gamma}}\,dx\,dy
$$
and, for any $u\in\mathscr M(\mathbb R^N)$,
$$
[u]_{\dot W_{N,\gamma}}
:=
\sup_{\lambda\in (0,\infty)}
\lambda\,\nu_{N,\gamma}(E^{(N)}_{\lambda,\gamma}[u]).
$$
The \emph{homogeneous Sobolev space $\dot W^{1,1}(\mathbb R^N)$}
is defined to be the set of all
$u\in L^1_{\rm loc}(\mathbb R^N)$ such that
$\nabla u\in L^1(\mathbb R^N)$ and
$$
\|u\|_{\dot W^{1,1}}
:=
\|\nabla u\|_{L^1(\mathbb R^N)}<\infty.
$$
Let the \emph{symbol $\mathcal M$} denote the space of all $\mathbb R^N$-valued
bounded Borel measures $\mu$ on $\mathbb R^N$ equipped with the norm
$\|\mu\|_{\mathcal M}:=|\mu|(\mathbb R^N)$.
The \emph{space $\dot{\mathrm{BV}}(\mathbb R^N)$
of functions of bounded variations}
consists of all functions $u\in L^1_{\mathrm{loc}}(\mathbb R^N)$ whose
gradient $\nabla u\in\mathcal M$, and we let
$\|u\|_{\dot{\mathrm{BV}}}:=\|\nabla u\|_{\mathcal M}$.
For any $\gamma\in\mathbb R$, let
$$
\dot{W}^{1,1}_N(\gamma)
:=
\left\{
u\in\dot{W}^{1,1}(\mathbb R^N):[u]_{\dot W_{N,\gamma}}<\infty
\right\}$$
and
$$\dot{BV}_N(\gamma)
:=
\left\{
u\in\dot{BV}(\mathbb R^N):[u]_{\dot W_{N,\gamma}}<\infty
\right\},
$$
respectively, with the quasi-seminorms
$$
\|u\|_{\dot{W}^{1,1}_N(\gamma)}
:=
\|\nabla u\|_{L^1(\mathbb R^N;\mathbb R^N)}+[u]_{\dot W_{N,\gamma}}$$
and
$$\|u\|_{\dot{BV}_N(\gamma)}
:=
\|\nabla u\|_{\mathcal M(\mathbb R^N;\mathbb R^N)}+[u]_{\dot W_{N,\gamma}}.
$$
Since these quasi-seminorms vanish on constants and are invariant under the
addition of constants, it is natural to define the quotient spaces
$\dot{W}^{1,1}_N(\gamma)/\mathbb R$ and $\dot{BV}_N(\gamma)/\mathbb R$.
Throughout this article, unless otherwise specified, we always
assume $N\in\mathbb N$. In what follows, to simplify
the presentation, we suppress the
dimensional index $N$ in the notation. For
example, we write $Q_\gamma$, $E_{\lambda,\gamma}$, $\nu_\gamma$,
$[u]_{\dot W_\gamma}$, $\dot{W}^{1,1}(\gamma)$, and $\dot{BV}(\gamma)$,
respectively, instead of $Q^{(N)}_\gamma$, $E^{(N)}_{\lambda,\gamma}$,
$\nu_{N,\gamma}$, $[u]_{\dot W_{N,\gamma}}$, $\dot{W}^{1,1}_N(\gamma)$,
and $\dot{BV}_N(\gamma)$.

To state our first main result, we need to recall the concept of
homogeneous Hardy--Sobolev spaces.
Let the \emph{symbol $C_{\mathrm c}^\infty(\mathbb R^N)$} denote
the set of all infinitely differentiable functions
on $\mathbb R^N$ with compact support.
Fix $\varphi\in C_{\mathrm c}^\infty(\mathbb R^N)$ satisfying
$\int_{\mathbb R^N}\varphi=1$, and for any $t\in(0,\infty)$, let
$\varphi_t:=t^{-N}\varphi(\frac{\cdot}{t})$.
We denote by $\mathcal S(\mathbb R^N)$
the set of all Schwartz functions on $\mathbb R^N$
equipped with the well-known topology determined by
a countable family of norms and by $\mathcal S'(\mathbb R^N)$
the set of all tempered distributions equipped
with the weak-$\ast$ topology.
For any $f\in\mathcal S'(\mathbb R^N)$, define its \emph{radial maximal
function $\mathcal M_\varphi f$} by setting, for any $x\in \mathbb R^N$,
$$
\mathcal M_\varphi f(x)
:=
\sup_{t\in (0,\infty)}|f*\varphi_t(x)|.
$$
For any $p\in(0,1]$, the \emph{Hardy space} $H^p(\mathbb R^N)$ is defined to be the set of all $f\in\mathcal S'(\mathbb R^N)$
such that
$$
\|f\|_{H^p(\mathbb R^N)}
:=
\|\mathcal M_\varphi f\|_{L^p(\mathbb R^N)}
<\infty.
$$

\begin{definition}\label{def:H11}
Let $p\in(0,1]$.
The \emph{homogeneous Hardy--Sobolev space $\dot H^{1,p}(\mathbb R^N)$} is defined to be
the set of all $u\in\mathcal S'(\mathbb R^N)$ such that
$$
\|u\|_{\dot H^{1,p}(\mathbb R^N)}
:=
\sum_{j=1}^N
\|\partial_j u\|_{H^p(\mathbb R^N)}
<\infty,
$$
where $\partial_ju$ is distributional derivative of $u$.
\end{definition}

From \cite[p.\,109 and Theorem D3]{C05}, we infer that the
homogeneous Triebel--Lizorkin space $\dot{F}^1_{1,2}(\mathbb R^N)$
coincides with the Hardy--Sobolev space $\dot{H}^{1,1}(\mathbb R^N)$
with equivalent seminorms.
This identification also explains why Brezis et al. refer to
$\dot{F}^1_{1,2}(\mathbb R^N)$ directly as the Hardy--Sobolev space.

Our first main result answers the comparison question in \cite[Subsection~7A]{BSSVY}
on $\dot{W}^{1,1}(\gamma)$ and $\dot{BV}(\gamma)$.

\begin{theorem}\label{thm:main-embedding}
Let $N\in\mathbb N$ and $\gamma\in[-1,0)$. Then
$$
\dot H^{1,1}(\mathbb R^N)\subsetneqq\dot{W}^{1,1}(\gamma)\quad\text{and}\quad
\dot H^{1,1}(\mathbb R^N)\subsetneqq\dot{BV}(\gamma).
$$
\end{theorem}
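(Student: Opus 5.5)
Two things need proving: the continuous inclusion $\dot H^{1,1}(\mathbb R^N)\subset\dot W^{1,1}(\gamma)$, and a function lying in $\dot W^{1,1}(\gamma)$ but not in $\dot H^{1,1}(\mathbb R^N)$. Since $\dot W^{1,1}(\gamma)\subset\dot{BV}(\gamma)$ and $\dot H^{1,1}\subset\dot W^{1,1}$ (because $H^1\subset L^1$), the same two facts give both claims in Theorem~\ref{thm:main-embedding}.

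\textbf{Embedding.} I would use an atomic (molecular) decomposition of $\dot H^{1,1}$. Every $u\in\dot H^{1,1}(\mathbb R^N)$ can be written, modulo constants, as $u=\sum_k\lambda_k a_k$ with $\sum_k|\lambda_k|\lesssim\|u\|_{\dot H^{1,1}}$. Each $a_k$ is a Hardy--Sobolev atom: it is supported in a ball $B_k$ of radius $r_k$, and $\|\nabla a_k\|_{L^\infty}\le|B_k|^{-1}$ (or $\|\nabla a_k\|_{L^2}\le|B_k|^{-1/2}$), so $\|a_k\|_{L^\infty}\lesssim r_k^{1-N}$.

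The first step is a uniform bound $[a]_{\dot W_\gamma}\lesssim1$ for atoms. By scaling, take $B=B(0,1)$. Split $E_{\lambda,\gamma}[a]$ into the region where $|x-y|\le 2$ and at least one point is in $B$, and the region where $|x-y|>2$.
\begin{itemize}
\item Near region: use $|a(x)-a(y)|\le\|\nabla a\|_\infty|x-y|$. The condition $|x-y|^{-\gamma}\gtrsim\lambda$ forces $|x-y|\lesssim\lambda^{-1/|\gamma|}$, which is relevant only when $\lambda\lesssim1$. There the measure is at most $\int_{|h|\le 2}|h|^{\gamma-N}\,dh$, which diverges for $\gamma\le0$. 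So instead I restrict to $|h|\ge c\lambda^{1/|\gamma|}$ using $|a(x)-a(y)|\le2\|a\|_\infty$: we need $|h|^{1+\gamma}<2/\lambda$ and $\lambda\lesssim|h|^{-\gamma}$. I expect this piece to be fine.
\item Far region: $|Q_\gamma a|\le|a(x)|\,|x-y|^{-1-\gamma}$ with $x\in B$. Its $\nu_\gamma$-measure is $\int_B\int_{|h|>1,\ |h|^{1+\gamma}<|a(x)|/\lambda}|h|^{\gamma-N}$.
\end{itemize}
For $\gamma\in(-1,0)$ the far-region integral is $\lesssim\int_B\lambda^{-1}|a|$ after evaluating the power of $|h|$ and using $|a|\lesssim1$. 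The exponent arithmetic gives $\lambda\nu\lesssim\|a\|_{L^1}$ only on the range $\lambda\lesssim\|a\|_\infty$, so I would organize the proof as the level-set bound $\lambda\,\nu_\gamma(E_{\lambda,\gamma}[a])\lesssim1$, uniformly in $\lambda$. The endpoint $\gamma=-1$ has no $|h|$ constraint in the far region, and I would handle it using cancellation in $a(x)-a(y)$ rather than pointwise size.

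The second step is summing atoms, and this is the main obstacle: $[\cdot]_{\dot W_\gamma}$ is a weak-type quasinorm with no countable triangle inequality. The plan is to bound $E_{\lambda,\gamma}[u]$ by distributing $\lambda$ among the atoms with weights $|\lambda_k|/\sum_j|\lambda_j|$, which yields only a logarithmic loss and is insufficient. So I would instead prove a \emph{strong-type} substitute: for each atom, $\nu_\gamma(\{|Q_\gamma a|>\lambda\})\lesssim\min(\lambda^{-1},\lambda^{-q})$-type bounds with two different exponents $q$ on either side of 1. I would then apply the standard argument (as for weak $L^1$ sums of functions with good $L^{p_0}$ and $L^{p_1}$ bounds) to conclude $\lambda\,\nu_\gamma(E_{\lambda,\gamma}[u])\lesssim\sum_k|\lambda_k|$. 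Alternatively I would try to use the BSVY pointwise method: bound $|u(x)-u(y)|$ by $|x-y|\,(M\nabla u(x)+M\nabla u(y))$ together with a grand-maximal control in $L^1$ that $H^1$ provides and $L^1$ does not.

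\textbf{Strictness.} Take $u$ with $\nabla u$ not in $H^1$ but in $\dot W^{1,1}(\gamma)$. The natural candidate is a compactly supported smooth $u$ with $\int\partial_ju\ne0$ in some form. But every $C_c^\infty$ function has $\partial_ju$ with zero integral and good decay, hence lies in $H^1$. So I would take $u$ smooth, with $\nabla u\in L^1$, failing $H^1$ because of a logarithmic local singularity: $\partial_1 u\approx|x|^{-N}\log^{-2}(1/|x|)$ near the origin, since $H^1$ requires $f\log f$-type integrability. Then I would verify $[u]_{\dot W_\gamma}<\infty$ by the pointwise maximal bound, for which only $L^1$ control of a suitably localized maximal function of $\nabla u$ is needed when $\gamma<0$. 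Alternatively, in $\dot{BV}$ an indicator function of a ball works for the second claim: it is not in $\dot H^{1,1}$ since its gradient is a singular measure, and it lies in $\dot{BV}(\gamma)$ by direct computation.
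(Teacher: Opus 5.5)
There are genuine gaps in both halves of your proposal.

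\emph{The embedding.} Your reduction of the $\dot{BV}(\gamma)$ claims to the $\dot W^{1,1}(\gamma)$ ones is fine. So is the single-atom bound, and it is easier than you make it. For an atom with $\|\nabla a\|_{L^\infty}\le|B|^{-1}$, the Lipschitz bound for $|x-y|\le 2r$ and the size bound for $|x-y|>2r$ each give $\lambda\,\nu_\gamma(E_{\lambda,\gamma}[a])\lesssim1$. This holds also at $\gamma=-1$, because $|h|^{\gamma-N}$ is integrable at infinity, so no cancellation is needed.

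The problem is the summation, and your main fix cannot work. A bound placing $Q_\gamma a$ in $L^{q_0,\infty}(\nu_\gamma)\cap L^{q_1,\infty}(\nu_\gamma)$ with $q_0<1<q_1$ would put $Q_\gamma a$ in $L^1(\nu_\gamma)$. But $\int|Q_\gamma a|\,d\nu_\gamma=\iint|a(x)-a(y)|\,|x-y|^{-N-1}\,dx\,dy=\infty$ for every nonconstant $a$. Each atom's level-set profile is of exact order $\lambda^{-1}$ as $\lambda\to0$. So any argument using only the distribution functions of the $Q_\gamma a_k$ reproduces the logarithmic loss you noticed. Indeed, Theorem~\ref{thm:main-non-normability} shows that $[\cdot]_{\dot W_\gamma}$ admits no uniform triangle inequality at all.

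The missing idea is to sum \emph{before} taking level sets, inside a pointwise inequality that is subadditive in the controlling function: $|u(x)-u(y)|\le|x-y|[g(x)+g(y)]$ with $g\in L^1$. With your atoms this is elementary: $g_k:=|B_k|^{-1}\mathbf 1_{3B_k}$ works for $L^\infty$-gradient atoms, and one takes $g:=\sum_k|\lambda_k|g_k$. The paper instead quotes Koskela--Saksman (Lemma~\ref{lem:KS-exact}). Then $E_{\lambda,\gamma}[u]\subset\{\lambda|x-y|^{\gamma}<g(x)+g(y)\}$, and polar coordinates give $\nu_\gamma(E_{\lambda,\gamma}[u])\lesssim\lambda^{-1}\|g\|_{L^1}$ at once. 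Your fallback points this way only if the controlling function comes from the grand maximal function of $\nabla u$. With the Hardy--Littlewood maximal function of $|\nabla u|$ it fails, since $M|\nabla u|\notin L^1$ unless $\nabla u=0$.

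\emph{Strictness.} Your way of certifying $[u]_{\dot W_\gamma}<\infty$ is self-defeating. Any bound $|u(x)-u(y)|\le|x-y|[g(x)+g(y)]$ with $g\in L^1$ puts $u$ into $\dot H^{1,1}$ by Lemma~\ref{lem:KS-exact}. This includes an integrable localized maximal function of $\nabla u$ plus a bounded term for large $|x-y|$. Consistently, by Stein's $L\log L$ theorem, such integrability forces $|\nabla u|\in L\log L_{\mathrm{loc}}$, which is exactly what your non-membership mechanism needs to fail.

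For a counterexample, finiteness must come from a direct level-set computation that exploits the weak-type nature of the functional; this is the core of Lemma~\ref{lem:log-endpoint}. For $u(x)=\log^{-1}(e/x)$ on $(0,1/e]$, concavity gives $u(x+h)-u(x)\le hu'(x)$. So the $x$-section of the level set lies in $\{x\log^2(e/x)<h^{-\gamma}/\lambda\}$, of measure $\lesssim h^{-\gamma}\lambda^{-1}\log^{-2}(e\lambda h^{\gamma})$. Moreover $\int_0^{(\lambda/e)^{1/|\gamma|}}h^{-1}\log^{-2}(e\lambda h^{\gamma})\,dh=\frac{1}{2|\gamma|}$ for every $\lambda$. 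The remaining pieces are easy, using $|u(x+h)-u(x)|\le u(2h)$ near $0$ at $\gamma=-1$.

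Your $L\log L$ heuristic is in the right spirit, but the candidate is problematic as stated. Taken literally (one-signed), $\partial_1u\approx|x|^{-N}\log^{-2}(1/|x|)$ near $0$ is incompatible with $\nabla u\in L^1$ when $N\ge2$. Integrating along $x_1$-lines gives $u(\varepsilon,x')-u(-\varepsilon,x')\gtrsim|x'|^{1-N}\log^{-2}(1/|x'|)$ near $x'=0$. For a.e.\ small $\varepsilon$ this difference must lie in $W^{1,1}_{\mathrm{loc}}(\mathbb R^{N-1})$, which is impossible. If instead $\partial_1u$ changes sign (e.g.\ is odd, as for radial $u$), the $L\log L$ obstruction disappears. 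Already in one dimension, the derivative of the even function $\log^{-1}(e/|x|)$, suitably cut off, lies in $H^1(\mathbb R)$. This is why the paper uses a one-sided singularity in one dimension, proves $\mathcal Hu'\notin L^1(\mathbb R)$ directly, and lifts via $u(x_1)\phi(x')$ and projection of atoms.

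Finally, your $\dot{BV}$ alternative fails at $\gamma=-1$. For $\lambda\in(0,1)$, $E_{\lambda,-1}[\mathbf 1_B]$ consists of all pairs with exactly one point in $B$. Its measure is $2\int_B\int_{\mathbb R^N\setminus B}|x-y|^{-N-1}\,dy\,dx\gtrsim\int_B\operatorname{dist}(x,\partial B)^{-1}\,dx=\infty$.
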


There exist two key ingredients in the proof of
Theorem \ref{thm:main-embedding}. Indeed, the proof of
Theorem \ref{thm:main-embedding} treats the
inclusion and the strictness separately. The inclusion is
proved by applying the Hardy--Sobolev pointwise
characterization of Koskela and Saksman \cite[Theorem 1]{KS}
(see also Lemma \ref{lem:KS-exact}),
which is the first key ingredient.
For the strictness, we
first construct a one-dimensional function in $\dot{W}^{1,1}_1(\gamma)$ whose derivative has
the Hilbert transform outside $L^1(\mathbb R)$,
and hence is not in $H^1(\mathbb R)$. The second key ingredient
is to choose a function \eqref{log-exampleu} involving
the logarithmic function for which the Hilbert
transform of the derivative loses one power of logarithmic decay and
becomes non-integrable, while the original function remains slowly
varying enough to have a finite BSVY quasi-seminorm.
In the end, we lift this example to the higher dimension case.

\begin{remark}
Let $\gamma\in[-1,0)$.
Using \cite[Theorem 1.1(c) and Theorem 1.4(ii)]{BSSVY}, we conclude
$\dot{W}^{1,1}(\gamma)\subset\dot{W}^{1,1}(\mathbb R^N)$
and $\dot{BV}(\gamma)\subset\dot{BV}(\mathbb R^N)$.
Moreover, \cite[Theorem 1.8]{BSSVY} gives a function
$u\in\dot{W}^{1,1}(\mathbb R^N)\subset\dot{BV}(\mathbb R^N)$ such that
$[u]_{\dot{W}_\gamma}=\infty$.
These, together with Theorem \ref{thm:main-embedding},
show that
$$
\dot{F}^1_{1,2}\left(\mathbb R^N\right)=\dot H^{1,1}\left(\mathbb R^N\right)\subsetneqq\dot{W}^{1,1}(\gamma)
\subsetneqq\dot W^{1,1}\left(\mathbb R^N\right)$$
and
$$
\dot{F}^1_{1,2}\left(\mathbb R^N\right)=\dot H^{1,1}\left(\mathbb R^N\right)
\subsetneqq\dot{BV}(\gamma)
\subsetneqq\dot{BV}\left(\mathbb R^N\right).
$$
\end{remark}

Our second main result gives a negative answer to the normability question
in \cite[Subsection~7A]{BSSVY}.
For $\mathcal X_N(\gamma)\in\{\dot{W}^{1,1}_N(\gamma),\dot{BV}_N(\gamma)\}$, we denote by
$\|\cdot\|_{\mathcal X_N(\gamma)/\mathbb R}$ the quotient quasi-norm induced by
$\|\cdot\|_{\mathcal X_N(\gamma)}$, namely
$$
\|u+\mathbb R\|_{\mathcal X_N(\gamma)/\mathbb R}
:=
\|u\|_{\mathcal X_N(\gamma)}.
$$
We say that $\mathcal X_N(\gamma)/\mathbb R$ is \emph{normable} if this quotient
quasi-norm is equivalent to a norm on $\mathcal X_N(\gamma)/\mathbb R$.

\begin{theorem}\label{thm:main-non-normability}
Let $N\in\mathbb N$ and $\gamma\in[-1,0)$. Then neither $\dot{W}^{1,1}(\gamma)/\mathbb R$ nor
$\dot{BV}(\gamma)/\mathbb R$ is normable.
\end{theorem}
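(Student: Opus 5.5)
The plan is to show that the quotient quasi-norm fails the triangle inequality in a quantitative way. If $\|\cdot\|_{\mathcal X(\gamma)/\mathbb R}$ were equivalent to a norm $\|\cdot\|_\ast$, then for all $u_1,\dots,u_m$
$$
\Big\|\sum_{k=1}^m u_k\Big\|_{\mathcal X(\gamma)}\le C\sum_{k=1}^m\|u_k\|_{\mathcal X(\gamma)},
$$
with $C$ independent of $m$. So it suffices to build, for each $m$, functions $u_1,\dots,u_m$ with $\|u_k\|_{\mathcal X(\gamma)}\lesssim 1$ whose sum has quasi-norm much larger than $m$, say $\gtrsim m\log m$. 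We will take all $u_k$ in $C_{\mathrm c}^\infty(\mathbb R^N)$, so that they lie in both $\dot W^{1,1}(\gamma)$ and $\dot{BV}(\gamma)$ and one construction handles both spaces.

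The gradient part $\|\nabla u\|_{L^1}$ is a genuine norm and can only grow linearly. The blow-up must therefore come from $[\,\cdot\,]_{\dot W_\gamma}$. This is a weak-type ($L^{1,\infty}$-like) quantity, and $L^{1,\infty}$ is not normable: translates of a fixed profile, summed, have $L^{1,\infty}$ norm of order $m\log m$. I would mimic this. Take a single bump $u_0\in C_{\mathrm c}^\infty$ with $\|\nabla u_0\|_{L^1}=1$. Let $u_k$ be suitably dilated or translated copies whose level-set contributions to $\nu_\gamma(E_{\lambda,\gamma})$ concentrate at dyadically different scales of $\lambda$.

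The key computation uses the tail of $\nu_\gamma(E_{\lambda,\gamma}[u])$ for large $|x-y|$ when $\gamma<0$. If $x\in\operatorname{supp}u$ and $|y|$ is large, then $|Q_\gamma u(x,y)|\approx |u(x)|\,|x-y|^{-1-\gamma}$. Since $-1-\gamma\in[-1,0)$, the relevant pairs have $|x-y|\lesssim (|u(x)|/\lambda)^{1/(1+\gamma)}$ when $\gamma>-1$. Integrating $|x-y|^{\gamma-N}$ over this range gives a contribution like $\int |u|^{\gamma/(1+\gamma)}\lambda^{-\gamma/(1+\gamma)}$ times powers. For $\gamma=-1$ the threshold becomes $|u(x)|>\lambda$ and the weight $|x-y|^{-N-1}$ is integrable at infinity.

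I would then choose the $u_k=a_k u_0(\cdot/r_k)$ (or separated translates) so that each $\lambda\nu_\gamma(E_{\lambda,\gamma}[u_k])$ peaks at height about $1$ at a distinct $\lambda_k$, with slowly decaying tails. For the sum $\sum_k u_k$, the contributions at a given $\lambda$ add up, giving about $\log m$ at many $\lambda$, or a single $\lambda$ capturing all $m$ tails. In dimension $N=1$ it may be cleaner to use the 1D analysis from the proof of Theorem~\ref{thm:main-embedding} and then lift to $\mathbb R^N$ by the same tensorization.

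The main obstacle is the precise lower bound for $[\sum_k u_k]_{\dot W_\gamma}$. The interaction terms between different $u_k$ (pairs $x\in\operatorname{supp}u_j$, $y\in\operatorname{supp}u_k$) must not cancel the differences, so I would place the supports far apart or use same-sign nested profiles to guarantee $|Q_\gamma(\sum u_k)|\ge |Q_\gamma u_k|$ on the relevant region. At the same time we need the upper bound $[u_k]_{\dot W_\gamma}\lesssim 1$ uniformly. That upper bound follows from scaling, using $[u(\cdot/r)]_{\dot W_\gamma}=r^{N-1}[u]_{\dot W_\gamma}$, which matches $\|\nabla u(\cdot/r)\|_{L^1}$ and so is compatible with keeping both parts of the quasi-norm bounded.
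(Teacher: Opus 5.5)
There is a genuine gap. Your reduction to the finite-sum inequality \eqref{2133}, the dilation identity $[u(\cdot/r)]_{\dot W_\gamma}=r^{N-1}[u]_{\dot W_\gamma}$, and the remark that a $\dot W^{1,1}(\gamma)$ counterexample also settles $\dot{BV}(\gamma)$ are all correct. But the proposal never produces a family $u_1,\dots,u_m$ with a proved lower bound, and the lower-bound mechanism you describe cannot work.

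\begin{itemize}
\item \emph{Separated translates satisfy \eqref{2133}.} Suppose the sets $\{u_k\neq0\}$ are pairwise disjoint. For $u_j(x)\neq0\neq u_k(y)$ with $j\neq k$ one has $u_j(x)-u_k(y)=[u_j(x)-u_j(y)]+[u_k(x)-u_k(y)]$. Hence $E_{\lambda,\gamma}[\sum_k u_k]\subset\bigcup_k E_{\lambda/2,\gamma}[u_k]$, so $[\sum_k u_k]_{\dot W_\gamma}\le 2\sum_k[u_k]_{\dot W_\gamma}$.
\item \emph{Pointwise domination is not enough.} For overlapping summands, $|Q_\gamma(\sum_k u_k)|\ge|Q_\gamma u_k|$ only gives $E_{\lambda,\gamma}[\sum_k u_k]\supset\bigcup_k E_{\lambda,\gamma}[u_k]$. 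But $\lambda\,\nu_\gamma(\bigcup_k E_{\lambda,\gamma}[u_k])\le\sum_k[u_k]_{\dot W_\gamma}$.
\item \emph{Adding level sets caps at $m$.} Adding the level-set contributions of the $u_k$ at a fixed $\lambda$ never exceeds $\sum_k[u_k]_{\dot W_\gamma}\lesssim m$, so it cannot give $m\log m$.
\end{itemize}

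What is needed is genuine superadditivity. You need a set of pairs $(x,y)$ of large $\nu_\gamma$-measure on which every $|Q_\gamma u_k(x,y)|$ is far below $\lambda$, while $|Q_\gamma(\sum_k u_k)(x,y)|>\lambda$. In the $L^{1,\infty}$ model these are the points where every $1/|x-k|$ is small but the sum is $\gtrsim\log m$.

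Your tail heuristic does not locate such pairs, and it is also inaccurate. For $\gamma<0$ the weight $|x-y|^{\gamma-N}$ is non-integrable at the diagonal. So the $\nu_\gamma$-mass of pairs with $u(y)=0$ and $|x-y|\lesssim R$ is governed by $\operatorname{dist}(x,\{u=0\})^{\gamma}$, not by $R^{\gamma}$; the latter is the mass of the complementary region $|x-y|>R$.

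The paper supplies the superposition in two ways.

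For $\gamma\in(-1,0)$ it uses the self-similar staircase of \cite[Lemma~6.2]{BSSVY}, $g_m=2^{-m}\sum_{j=1}^{2^m}g_0((\cdot-a_{m,j})/\rho^m)$ with $\rho^{1+\gamma}=\frac12$.
\begin{itemize}
\item Each step profile has quasi-norm $\|g_0\|_{\dot W^{1,1}_1(\gamma)}$ by Lemma~\ref{lem:affine}, so normability would force $\|g_m\|_{\dot W^{1,1}_1(\gamma)}\lesssim1$.
\item At the single level $\lambda=\frac14$, each of the $m$ scales $\rho^k$ contributes $\gtrsim1$: the jumps of size $2^{-m}$, each individually negligible, add up to $2^{-k}$ across distance $\rho^k$. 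Hence $[g_m]_{\dot W_{1,\gamma}}\gtrsim m$ by \cite[(6-6)]{BSSVY}.
\item This is then lifted to $\mathbb R^N$ via $g\mapsto g(x_1)\eta(x)$.
\end{itemize}

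For $\gamma=-1$ the paper forces $u=0$ on $(-\infty,0]$.
\begin{itemize}
\item The mixed pairs then contribute exactly $2|\{|U|>\lambda\}|$ with $U(t)=u(e^t)$ (Lemma~\ref{lem:log-formula}).
\item Blocks of log-length $L_m\sim H_m$ keep both the $\nu_K$-part $\mathcal L(U)$ and the gradient controlled by $\|c\|_{\ell^{1,\infty}}$ rather than $\|c\|_{\ell^1}$. This gives a linear map $J_m$ with $\|J_mc\|\sim\|c\|_{\ell^{1,\infty}}$.
\item Averaging $J_m$ over the permutations of $(1,\frac12,\dots,\frac1m)$ gives summands of norm $\sim1$ but an average of norm $\gtrsim H_m$.
\end{itemize}

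Your remark that for $\gamma=-1$ the condition reduces to $|u(x)|>\lambda$ is the germ of this second argument. It is not yet a proof without two further ingredients: the half-line zero set, which produces the scale-invariant measure $dy/y$, and control of the remaining $\nu_K$-term.
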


The proof of Theorem~\ref{thm:main-non-normability} is divided into two
cases. For $\gamma\in(-1,0)$, our argument is essentially based on the
Cantor-set construction from the proof of \cite[Lemma~6.2]{BSSVY}.
The key point here is that this construction can be written as a sum
of a sequence of elementary one-dimensional cutoff functions,
each of which is constant outside one interval and has a uniformly
bounded BSVY quasi-seminorm.
The resulting sum, however, fails the finite-sum estimate
\eqref{2133} required by normability.
For the endpoint case $\gamma=-1$, we use a logarithmic change of variables
and introduce the associated measure (see Definition \ref{def:hyperbolic-kernel})
arising from it.
This allows us to extract an ordinary one-variable
level-set term from the two-variable difference
level-set in the BSVY quasi-seminorm,
and hence reveal a hidden weak $L^1$ structure.
Based on this, for every
$m\in\mathbb N$, we construct a linear map
$c=(c_1,\ldots,c_m)\mapsto u_{m,c}$ from $\mathbb R^m$ into $\dot{W}^{1,1}(-1)$ or
$\dot{BV}(-1)$ such that the BSVY quasi-seminorm of $u_{m,c}$ is
uniformly equivalent to the weak $\ell^{1,\infty}_m$ quasi-norm of $c$.
Combining this with the classical non-normability
of the weak $\ell^{1,\infty}$ yields
the desired contradiction to normability.

The organization of the remainder of this article is as follows.

In Section~\ref{sec:embedding}, we prove
Theorem~\ref{thm:main-embedding}. We first recall the pointwise
Hardy--Sobolev characterization of Koskela and Saksman and use it to prove
the continuous embedding
$\dot H^{1,1}(\mathbb R^N)\hookrightarrow\dot{W}^{1,1}(\gamma)$
(see Theorem~\ref{thm:positive}). We then construct a logarithmic
one-dimensional example and use the Hilbert transform characterization of
$H^1(\mathbb R)$ to show that the inclusion is strict in one dimension case
(see Theorem~\ref{thm:1d-strict}). Finally, by a product construction and a
projection argument, we lift this example to the higher dimension case and obtain
Theorem~\ref{thm:Nd-strict}, which completes the proof of
Theorem~\ref{thm:main-embedding}.

In Section~\ref{sec:non-normability}, we prove
Theorem~\ref{thm:main-non-normability}.  We first show an
equivalent criterion of the normability
(see Lemma~\ref{lem:normability-criterion}).
For $\gamma\in(-1,0)$, we combine this
criterion with the construction from \cite[Lemma~6.2]{BSSVY} to prove the
one-dimensional non-normability result, and then lift it to the higher dimension case
(see Theorem~\ref{thm:nonnorm-open}). For the endpoint case $\gamma=-1$, we
construct a sequence of finite-dimensional weak $\ell^{1,\infty}$ blocks and lift them to
the higher dimension case, which then yields Theorem~\ref{thm:nonnorm-endpoint}.
Theorems~\ref{thm:nonnorm-open} and \ref{thm:nonnorm-endpoint} together prove
Theorem~\ref{thm:main-non-normability}.

We end this introduction by making some national conventions.
We always denote by $C$ a positive constant which is
independent of the main parameters involved, but it may vary from line to
line. We use $C_{\alpha,\beta,\ldots}$ to denote a positive constant
depending on the indicated parameters $\alpha,\beta,\ldots$. The notation
$f\lesssim g$ means that $f\le Cg$. If $f\lesssim g$ and $g\lesssim f$, we
write $f\sim g$. For any $x\in\mathbb R^N$ and $r\in(0,\infty)$, denote by
$B_N(x,r):=\{y\in\mathbb R^N:|x-y|<r\}$ the ball with center $x$ and radius $r$.
A cube $Q$ in $\mathbb R^N$ always has finite edge length and edges of
cubes are always assumed to be parallel to the coordinate axes,
but $Q$ is not necessary to be open or closed.
For any set $E$, denote by $\mathbf 1_E$ its characteristic function.

\section{Embedding}\label{sec:embedding}

In this section, we deal with Theorem
\ref{thm:main-embedding}.
The following lemma is the endpoint case appearing in \cite[Theorem~1]{KS}.

\begin{lemma}\label{lem:KS-exact}
A distribution $u\in\mathcal S'(\mathbb R^N)$ belongs to $\dot H^{1,1}(\mathbb R^N)$ if and only if $u$ is locally integrable
and there exist a nonnegative function $g\in L^1(\mathbb R^N)$ and a set $E\subset\mathbb R^N$ of measure zero such that
$$
|u(x)-u(y)|\le |x-y|[g(x)+g(y)],
\qquad x,y\in\mathbb R^N\setminus E.
$$
Moreover,
$$
\|u\|_{\dot H^{1,1}(\mathbb R^N)}\sim \inf\|g\|_{L^1(\mathbb R^N)},
$$
where the infimum is taken over all admissible functions $g$ and
the positive equivalence constants are independent of $u$.
\end{lemma}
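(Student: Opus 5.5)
The plan is to prove the two directions separately. For the direction ``pointwise inequality $\Rightarrow$ $\dot H^{1,1}$'' I would use a Calder\'on--Zygmund/Whitney-type decomposition of $u$. For the converse I would build $g$ from a maximal function adapted to the Hardy space. Since this is the $p=1$ endpoint of \cite[Theorem~1]{KS}, I would follow Koskela--Saksman closely, and I would first record that for $p=1$ one cannot take $g=\mathcal M(\nabla u)$, because the Hardy--Littlewood maximal function does not map $L^1$ into $L^1$.

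\emph{Necessity.} Let $u\in\dot H^{1,1}(\mathbb R^N)$, so each $\partial_j u\in H^1(\mathbb R^N)$.

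First, $u$ is locally integrable, since $H^1\subset L^1$ and hence $u\in\dot W^{1,1}$.

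Next, at Lebesgue points $x,y$ with $|x-y|=r$, I would apply the usual telescoping argument over dyadic balls $B(x,2^{-k}r)$ and $B(y,2^{-k}r)$, together with Poincar\'e's inequality. This gives
$$|u(x)-u(y)|\lesssim |x-y|\left[\sup_{t\le 2r}t^{-N}\left|\int_{B(x,t)}\cdots\right|\right],$$
but I would organize it instead as the grand maximal function of $\nabla u$ evaluated at $x$ and at $y$.

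Concretely, the difference $u_{B(x,t)}-u_{B(x,t/2)}$ can be written as $t\,(\nabla u*\psi_t)(x)$ for a fixed smooth compactly supported vector kernel $\psi$. This holds because $\chi_{B_1}/|B_1|-\chi_{B_{1/2}}/|B_{1/2}|$ has integral zero, hence is the divergence of a compactly supported field. The field is only bounded, not smooth, so I would first mollify. Summing over $t=2^{-k}r$ gives $\lesssim r\,\mathcal M^*(\nabla u)(x)$, where $\mathcal M^*$ is a grand maximal function over a fixed bounded family of kernels. The sum over $k$ does not simply add up, so I would control it by a variant with an extra decay factor; this is the delicate part.

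I would then set $g=C\sum_j \mathcal M^*(\partial_j u)$. The norm estimate $\|g\|_{L^1}\lesssim\sum_j\|\partial_j u\|_{H^1}$ follows from the grand maximal characterization of $H^1$.

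\emph{Sufficiency.} Suppose $|u(x)-u(y)|\le|x-y|[g(x)+g(y)]$ with $g\in L^1$. I would show $\partial_j u\in H^1$ by bounding the radial maximal function $\mathcal M_\varphi(\partial_j u)$ pointwise by an $L^1$-controlled quantity. The natural first attempt is
$$(\partial_j u*\varphi_t)(x)=-t^{-1}\int (u(y)-u(x))(\partial_j\varphi)_t(x-y)\,dy,$$
which is bounded by $\int (g(x)+g(y))|(\partial_j\varphi)_t(x-y)|\,dy$. This in turn is $\lesssim g(x)+\mathcal M g(x)$, and that is not in $L^1$.

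So I would instead use an atomic decomposition argument, which I expect to be the main obstacle. Consider the level sets $\Omega_k=\{\mathcal M g>2^k\}$ and Whitney cubes of $\Omega_k$. On $\{g\le 2^k\}$ the function $u$ is $2^{k+1}$-Lipschitz, so I would extend it by McShane extension to a Lipschitz function $u_k$ with constant $\lesssim2^k$. This gives a Lipschitz truncation chain $u=\lim u_k$ in which $u_{k+1}-u_k$ is supported in $\{g>2^k\}$, or in a slight enlargement of it. Then $\partial_j(u_{k+1}-u_k)$ is a sum of functions that are derivatives of compactly supported functions, hence have mean zero, and are bounded by $2^k$ on sets whose total measure is $\lesssim|\{g>2^{k}\}|$.

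Each such piece, localized via a Whitney partition of unity, is $2^k|Q|$ times an $L^\infty$-atom, and the derivative structure ensures the vanishing moment. Summing gives
$$\|\partial_j u\|_{H^1}\lesssim\sum_k 2^k|\{g>2^k\}|\sim\|g\|_{L^1}.$$
The subtle point is that the sets must be defined through $g$ itself rather than through $\mathcal M g$, to avoid losing integrability. Equivalently, I would need the Lipschitz-on-$\{g\le\lambda\}$ property without passing through $\mathcal M g$, which the pointwise inequality gives directly. Taking the infimum over admissible $g$ then yields the norm equivalence.
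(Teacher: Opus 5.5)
The paper gives no proof of this lemma. It imports it as the endpoint $p=1$ of \cite[Theorem~1]{KS}, so your argument has to stand on its own.

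\textbf{Necessity.} This half is sound, and the step you call delicate is harmless. Take a smooth $\phi$ and a smooth compactly supported field $\Psi$ with $\operatorname{div}\Psi=\phi-\phi_{1/2}$. Then $u*\phi_t-u*\phi_{t/2}=t\,(\nabla u*\Psi_t)$. So the telescoping sum over $t=2^{-k}r$ is at most $\sum_{k\ge0}2^{-k}r\,\mathcal M^*(\nabla u)(x)=2r\,\mathcal M^*(\nabla u)(x)$, which is just a geometric series. The cross term $u*\phi_r(x)-u*\phi_r(y)$ with $|x-y|=r$ is handled the same way. Use $\Theta_e(z):=-e\int_0^1\phi(z+se)\,ds$ with $e:=(y-x)/r$: then $\operatorname{div}\Theta_e=\phi-\phi(\cdot+e)$, and $\{\Theta_e\}_{|e|=1}$ is bounded in $\mathcal S(\mathbb R^N)$.

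\textbf{The gap in sufficiency.} It sits exactly at the obstacle you flag.
\begin{itemize}
\item As written, the Whitney cubes come from $\Omega_k=\{\mathcal M g>2^k\}$, with $\mathcal M$ the Hardy--Littlewood maximal function. That bookkeeping gives $\sum_k 2^k|\Omega_k|\sim\|\mathcal M g\|_{L^1(\mathbb R^N)}$, which is infinite unless $g=0$.
\item You then say the sets should be defined through $g$. But $\{g>2^k\}$ is merely measurable and has no Whitney decomposition, so it is left open which open set carries the atoms.
\item This matters beyond the measure count. With $w_k:=u_{k+1}-u_k$, the localized piece $\partial_j(\phi_Q w_k)$ contains $w_k\,\partial_j\phi_Q$. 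That term is $O(2^k)$ only if $|w_k|\lesssim 2^k\ell(Q)$ on $\operatorname{supp}\phi_Q$, i.e., only if $w_k$ vanishes outside the decomposed open set.
\end{itemize}

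\textbf{The fix.} Decompose $O_k:=\{w_k\neq0\}$ itself.
\begin{itemize}
\item $O_k$ is open because $w_k$ is continuous.
\item $|O_k|\le|\{g>2^k\}|$ because $w_k=0$ on $\{g\le 2^k\}\setminus E$.
\item $|w_k(x)|\le 6\cdot 2^k\operatorname{dist}(x,\mathbb R^N\setminus O_k)$, because $w_k$ is $6\cdot2^k$-Lipschitz and vanishes off $O_k$.
\end{itemize}
Then each $\partial_j(\phi_Q w_k)$ is $C2^k|Q|$ times an $H^1(\mathbb R^N)$-atom supported in a fixed dilate of $Q$. Hence $\sum_{k\in\mathbb Z}\|\partial_j w_k\|_{H^1(\mathbb R^N)}\lesssim\sum_{k\in\mathbb Z}2^k|\{g>2^k\}|\le 2\|g\|_{L^1(\mathbb R^N)}$.

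You must also check that $\partial_j u=\sum_{k\in\mathbb Z}\partial_j w_k$ in the sense of distributions. As $k\to-\infty$, $\|\nabla u_k\|_{L^\infty}\le 2^{k+1}\to0$. As $k\to\infty$, $u_k\to u$ almost everywhere. By Poincar\'e's inequality and the $L^1_{\mathrm{loc}}$ convergence of $\nabla u_k$, this upgrades to convergence in $L^1_{\mathrm{loc}}$.

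With these additions, your Lipschitz-truncation route is a complete proof, giving an explicit atomic decomposition of $\nabla u$ indexed by the level sets of $g$. Alternatively, your discarded first attempt can be rescued. Subtract the mean $u_{B(x,t)}$ instead of $u(x)$, and use the Poincar\'e inequality for Haj\l{}asz gradients with an exponent $s\in(N/(N+1),1)$. This gives $\mathcal M_\varphi(\partial_j u)\lesssim[\mathcal M(g^s)]^{1/s}$, which is integrable because $\mathcal M$ is bounded on $L^{1/s}(\mathbb R^N)$.
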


The following is the definition of the Hilbert transform;
see, for instance, \cite[Definition~5.1.1]{g14c}.

\begin{definition}\label{def:FT-H}
Let $\varphi\in\mathcal S(\mathbb R)$. For any $\varepsilon\in(0,\infty)$,
we define the truncated Hilbert transform by setting, for any $x\in\mathbb R,$
$$
\mathcal H^{(\varepsilon)}\varphi(x)
:=
\frac1\pi\int_{|x-y|\ge\varepsilon}\frac{\varphi(y)}{x-y}\,dy .
$$
The \emph{Hilbert transform} $\mathcal H\varphi$ of $\varphi$ is defined by the principal value limit
$$
\mathcal H\varphi(x)
:=
\lim_{\varepsilon\to0^+}\mathcal H^{(\varepsilon)}\varphi(x)
=
\frac1\pi\,\operatorname{p.v.}\int_{\mathbb R}\frac{\varphi(y)}{x-y}\,dy,
$$
where $\varepsilon\to0^+$ means $\varepsilon\in (0,\infty)$ and
$\varepsilon\to0$.
\end{definition}

\begin{remark}\label{rem:Hilbert-Lp-ae}
Let $p\in[1,\infty)$ and $f\in L^p(\mathbb R)$.
It follows from \cite[Theorem 5.1.12 and Corollary~5.3.6]{g14c} that the limit
$$
\mathcal H f(x):=\lim_{\varepsilon\to0^+}\mathcal H^{(\varepsilon)}f(x)
$$
exists for almost every $x\in\mathbb R$. The existence of this limit implies that the Hilbert transform of $f$ is well defined.
\end{remark}

We first prove the embedding theorem.

\begin{theorem}\label{thm:positive}
Let $\gamma\in[-1,0)$. Then
$\dot H^{1,1}(\mathbb R^N)\hookrightarrow \dot{W}^{1,1}(\gamma)$ and
$\dot H^{1,1}(\mathbb R^N)\hookrightarrow \dot{BV}(\gamma)$; that is,
there exists a positive constant $C$ such that, for any
$u\in\dot H^{1,1}(\mathbb R^N)$,
$$
\|u\|_{\dot{W}^{1,1}(\gamma)}
\le
C\|u\|_{\dot H^{1,1}(\mathbb R^N)}\quad\text{and}\quad
\|u\|_{\dot{BV}(\gamma)}
\le
C\|u\|_{\dot H^{1,1}(\mathbb R^N)}.
$$
\end{theorem}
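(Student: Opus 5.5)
We use the Koskela--Saksman characterization in Lemma~\ref{lem:KS-exact} to bound the level sets of $Q_\gamma u$ by sets on which $g$ is large, and then integrate the kernel explicitly.

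\textbf{Gradient part.} Let $u\in\dot H^{1,1}(\mathbb R^N)$. Each $\partial_j u$ lies in $H^1(\mathbb R^N)\subset L^1(\mathbb R^N)$ with $\|\partial_j u\|_{L^1}\le C\|\partial_j u\|_{H^1}$. Hence $\nabla u\in L^1$, so $u\in\dot W^{1,1}(\mathbb R^N)\subset\dot{BV}(\mathbb R^N)$, and
$$
\|\nabla u\|_{\mathcal M}=\|\nabla u\|_{L^1}\lesssim\|u\|_{\dot H^{1,1}}.
$$
Both stated inequalities therefore reduce to proving $[u]_{\dot W_\gamma}\lesssim\|u\|_{\dot H^{1,1}}$.

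\textbf{Reduction to $g$.} By Lemma~\ref{lem:KS-exact}, take $g\ge0$ with $\|g\|_{L^1}\lesssim\|u\|_{\dot H^{1,1}}$ and a null set $E$ such that
$$
|u(x)-u(y)|\le|x-y|\,[g(x)+g(y)]\quad\text{for } x,y\notin E.
$$
Fix $\lambda>0$. Up to a $\nu_\gamma$-null set, if $(x,y)\in E_{\lambda,\gamma}[u]$ then
$$
\lambda|x-y|^{\gamma}<g(x)+g(y),
$$
so $g(x)>\tfrac{\lambda}{2}|x-y|^\gamma$ or $g(y)>\tfrac{\lambda}{2}|x-y|^\gamma$. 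Since $\nu_\gamma$ is symmetric in $x$ and $y$, it suffices to bound
$$
\nu_\gamma\left(\left\{(x,y): g(x)>\tfrac{\lambda}{2}|x-y|^{\gamma}\right\}\right).
$$

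\textbf{Integrating the kernel.} Because $\gamma<0$, the condition $g(x)>\tfrac{\lambda}{2}|x-y|^{\gamma}$ is equivalent to
$$
|x-y|>R(x):=\left(\frac{2g(x)}{\lambda}\right)^{1/\gamma}=\left(\frac{\lambda}{2g(x)}\right)^{1/|\gamma|},
$$
with $R(x)=\infty$ when $g(x)=0$. For fixed $x$, integrate in $y$ using polar coordinates and $\gamma<0$:
$$
\int_{|x-y|>R(x)}|x-y|^{\gamma-N}\,dy=\frac{|\mathbb S^{N-1}|}{|\gamma|}R(x)^{\gamma}=\frac{|\mathbb S^{N-1}|}{|\gamma|}\cdot\frac{2g(x)}{\lambda}.
$$
Integrating over $x$ gives
$$
\lambda\,\nu_\gamma(E_{\lambda,\gamma}[u])\le\frac{4|\mathbb S^{N-1}|}{|\gamma|}\,\|g\|_{L^1}\lesssim\|u\|_{\dot H^{1,1}},
$$
uniformly in $\lambda$, which is the required bound on $[u]_{\dot W_\gamma}$. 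When $N=1$, $|\mathbb S^0|=2$ and the computation is the same.

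\textbf{Points needing care.} There is no real obstacle; the argument is short and the constant depends only on $N$ and $\gamma$. Three details should be checked:
\begin{itemize}
\item The pointwise inequality holds only off $E\times\mathbb R^N\cup\mathbb R^N\times E$, which is $\nu_\gamma$-null, so it does not affect the level-set measure.
\item Measurability of the level sets is immediate.
\item $u$ must be a genuine locally integrable function representative, which Lemma~\ref{lem:KS-exact} supplies.
\end{itemize}

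Notably, the sign condition $\gamma<0$ is exactly what makes the tail integral converge. It is also why the weak-type bound comes for free from $g\in L^1$, with no maximal-function estimate needed.
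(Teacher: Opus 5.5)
Your proposal is correct and follows the paper's proof essentially step for step. Lemma~\ref{lem:KS-exact} supplies $g$, the level-set condition reduces to $\lambda|x-y|^{\gamma}<g(x)+g(y)$, symmetry and polar coordinates give $\lambda\,\nu_\gamma(E_{\lambda,\gamma}[u])\lesssim\|g\|_{L^1(\mathbb R^N)}$, and the gradient part comes from $\dot H^{1,1}(\mathbb R^N)\hookrightarrow\dot W^{1,1}(\mathbb R^N)\hookrightarrow\dot{BV}(\mathbb R^N)$; your version simply makes the polar-coordinate step explicit, with constant $4|\mathbb S^{N-1}|/|\gamma|$, and correctly checks that the exceptional set is $\nu_\gamma$-null.
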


\begin{proof}
Let $u\in \dot H^{1,1}(\mathbb R^N)$. Lemma~\ref{lem:KS-exact} gives a nonnegative function
$g\in L^1(\mathbb R^N)$ such that
$$|u(x)-u(y)|\le |x-y|\left[g(x)+g(y)\right]$$
for almost every $x,y\in\mathbb R^N$ and $\|g\|_{L^1(\mathbb R^N)}\sim
\|u\|_{\dot{H}^{1,1}(\mathbb R^N)}$.
Fix $\lambda\in (0,\infty)$. If $(x,y)\in E_{\lambda,\gamma}[u]$, then
$|u(x)-u(y)|>\lambda |x-y|^{1+\gamma},$
and hence $\lambda |x-y|^{\gamma}<g(x)+g(y).$
By this, symmetry, and polar coordinates, we find that
$$
\nu_\gamma(E_{\lambda,\gamma}[u])
\lesssim\int_{\mathbb R^N}\int_{\{h\in\mathbb R^N:\ \lambda |h|^{\gamma}\lesssim g(x)\}} |h|^{\gamma-N}\,dh\,dx
\sim\int_{\mathbb R^N}\frac{g(x)}{\lambda}\,dx.
$$
Multiplying by $\lambda$ and using Lemma~\ref{lem:KS-exact},
we conclude that
\begin{align*}
[u]_{\dot W_\gamma}\le C\|u\|_{\dot H^{1,1}(\mathbb R^N)}.
\end{align*}
This, together with the fact that
$\dot{H}^{1,1}(\mathbb R^N)\hookrightarrow \dot{W}^{1,1}(\mathbb R^N)
\hookrightarrow \dot{BV}(\mathbb R^N)$, implies that
$\dot H^{1,1}(\mathbb R^N)\hookrightarrow \dot{W}^{1,1}(\gamma)\hookrightarrow \dot{BV}(\gamma)$.
This finishes the proof of Theorem~\ref{thm:positive}.
\end{proof}

It remains to show the strictness; that is, there exists a function $u$
satisfying
$$u\in\left[\dot{W}^{1,1}(\gamma)\cap\dot{BV}(\gamma)\right]
\setminus\dot H^{1,1}(\mathbb R^N).$$
We define a function $u:\mathbb R\to\mathbb R$ by setting, for any $x\in\mathbb R$,
\begin{align}\label{log-exampleu}
u(x):=
\begin{cases}
\displaystyle\log^{-1}\left(\frac e x\right),&
\displaystyle x\in\left(0, \frac1e\right],\\[3mm]
\displaystyle 1-\frac{ex}{2},& \displaystyle x\in\left(\frac1e,\frac2e\right),\\[3mm]
\displaystyle0,& \text{otherwise}.
\end{cases}
\end{align}
The function $u$ is absolutely continuous and, for any $x\in\mathbb R$,
\begin{align}\label{log-examplef}
f(x):=u'(x)=
\begin{cases}
\displaystyle
\frac{1}{x}\log^{-2}\left(\frac ex\right),
&\displaystyle x\in\left(0,\frac1e\right),\\[2mm]
\displaystyle-\frac e2,
&\displaystyle x\in\left(\frac1e,\frac2e\right),\\[2mm]
\displaystyle0,
&\text{otherwise}.
\end{cases}
\end{align}
In particular, $f\in L^1(\mathbb R)$ and
$\int_{\mathbb R} f(x)\,dx=0.$

This construction is designed to balance two
requirements. On the one hand, $u(x)=\log^{-1}(e/x)$ increases sufficiently
slowly near the origin to have a finite BSVY quasi-seminorm.
On the other hand,
the Hilbert transform exhibits a loss of one
power of logarithmic decay: $u'$ contains the factor $\log^{-2}$,
whereas the non-integrable lower bound for $H(u')$ contains only
$\log^{-1}$.

\begin{lemma}\label{lem:log-endpoint}
Let $\gamma\in[-1,0)$ and $u$ be the same as in \eqref{log-exampleu}. Then $u\in\dot{W}^{1,1}_1(\gamma)$ and $u\in\dot{BV}_1(\gamma)$.
\end{lemma}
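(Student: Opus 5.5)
Since $u$ is absolutely continuous with $u'=f\in L^1(\mathbb R)$ by \eqref{log-examplef}, we have $u\in\dot W^{1,1}(\mathbb R)\subset\dot{BV}(\mathbb R)$ with $\|u'\|_{\mathcal M}=\|f\|_{L^1}$. Both claims therefore reduce to the single estimate
$$
\sup_{\lambda\in(0,\infty)}\lambda\,\nu_\gamma\bigl(E_{\lambda,\gamma}[u]\bigr)<\infty .
$$
By the symmetry $(x,y)\mapsto(y,x)$ it suffices to count pairs $(x,x+h)$ with $h>0$. Writing $t:=\lambda h^{1+\gamma}$ and $A_h:=\{x\in\mathbb R:|u(x+h)-u(x)|>t\}$, the task is to prove
$$
\lambda\int_0^\infty |A_h|\,h^{\gamma-1}\,dh\lesssim 1
$$
uniformly in $\lambda$. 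I will bound $|A_h|$ by splitting $x$ into regions and using three facts about $u$. First, $0\le u\le1$ and $u$ is supported in $[0,2/e]$. Second, $u$ is increasing and concave on $(0,1/e]$, so its modulus of continuity satisfies $\omega(s)\lesssim \log^{-1}(e/s)$ for $s\le1$. Third, $f$ is bounded by $e/2$ on $(1/e,2/e)$, and on $(0,1/e)$ the function $|f|$ is decreasing.

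\emph{Large increments, $h\ge1$.} Here $A_h\subset[-h,2/e]$, but also $|u(x+h)-u(x)|$ vanishes unless $x$ or $x+h$ lies in $[0,2/e]$, so $|A_h|\le 4/e$. Moreover $A_h\neq\emptyset$ forces $t<1$, i.e.\ $\lambda<h^{-(1+\gamma)}\le 1$. Hence this part contributes at most $C\lambda\int_1^\infty h^{\gamma-1}\,dh\lesssim\lambda\le1$; this uses $\gamma<0$.

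\emph{Small increments, $h<1$.} I split $x$ into three regions.

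\emph{(a) Near the origin, $x\in[-h,h]$.} These $x$ contribute measure at most $2h$. Since $|u(x+h)-u(x)|\le\omega(2h)$, they matter only if $\lambda h^{1+\gamma}\log(e/2h)\lesssim1$. For $\gamma\in(-1,0)$ this forces $h\lesssim\lambda^{-1/(1+\gamma)}$, so $\lambda\int h\cdot h^{\gamma-1}\,dh\lesssim\lambda h^{1+\gamma}\lesssim1$ over that range. For $\gamma=-1$ the condition reads $h\ge e^{1-c/\lambda}$, and $\lambda\int_{e^{1-c/\lambda}}^1 h^{-1}\,dh\lesssim1$.

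\emph{(b) The linear piece and its neighbourhood, $x\in(1/e-h,2/e)$.} The increment there is $\lesssim h\log^{-2}(e/h)+eh/2$, and the measure is $\lesssim 1$. This is nonempty only when $\lambda h^{\gamma}\lesssim1$, which gives a bounded contribution exactly as in the $h\ge1$ case; for $\gamma=-1$, with $\lambda h^{-1}\lesssim1$, one gets $\lambda\int_{\lambda}^{1} h^{-2}\,dh\lesssim1$.

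\emph{(c) The logarithmic region, $x\in(h,1/e)$.} By monotonicity of $|f|$ there, the mean value theorem gives $|u(x+h)-u(x)|\le h f(x)=h\,x^{-1}\log^{-2}(e/x)$. So $x\in A_h$ forces $x\log^2(e/x)<h/t$. Since $h/t=h^{-\gamma}/\lambda$, inverting the function $x\mapsto x\log^2(e/x)$ yields $|A_h\cap(h,1/e)|\lesssim (h/t)\log^{-2}(e t/h)$ whenever $h/t$ is small, and $\lesssim1$ otherwise. Inserting this,
$$
\lambda\int (h/t)\log^{-2}(et/h)\,h^{\gamma-1}\,dh=\int\frac{dh}{h\log^{2}\bigl(e\lambda h^{\gamma}\bigr)} .
$$
After the substitution $s=\log(e\lambda h^{\gamma})$, so that $ds=\gamma\,dh/h$ with $|\gamma|\ge$ a fixed positive constant, this becomes $\lesssim\int_1^\infty s^{-2}\,ds<\infty$. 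The complementary range, where $h/t\gtrsim1$, is handled like (b).

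The main obstacle is step (c) together with the endpoint $\gamma=-1$. There the BSVY weight gives no extra decay, and finiteness comes only from the factor $\log^{-2}$ in $f$: it turns $\int dh/h$ into $\int dh/(h\log^2)$. I will carry out the inversion of $x\log^2(e/x)$ carefully, keeping track of which of $h/t$ and $t/h$ is large, so that the constant is uniform in $\lambda$. Summing (a)–(c) and the $h\ge1$ part gives $[u]_{\dot W_\gamma}<\infty$, hence $u\in\dot W^{1,1}_1(\gamma)\subset\dot{BV}_1(\gamma)$.
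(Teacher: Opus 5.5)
Your proposal is correct and follows essentially the same route as the paper: a split into large $h$ and, for small $h$, three $x$-regions (near the origin, the logarithmic piece, the linear piece). It uses the same concavity bound $u(x+h)-u(x)\le h\,u'(x)$, the same inversion of $x\log^2(e/x)$, and the same final integral $\int dh/(h\log^2(e\lambda h^{\gamma}))$ via the substitution $s=\log(e\lambda h^{\gamma})$. One small fix is needed: the bound $|u(x+h)-u(x)|\le hf(x)$ in (c) requires $x+h\le 1/e$, because for $x$ just below $1/e$ the linear piece has slope $e/2>f(x)\approx e/4$, so the bound only holds up to a factor $2$. Take region (c) to be $(h,1/e-h)$, as the paper does with $I_2(h)$; this costs nothing, since (b) already covers $(1/e-h,2/e)$.
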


\begin{proof}
Let $f$ be the same as in \eqref{log-examplef}.
From the fact that $f\in L^1(\mathbb R)$, we deduce that $u\in\dot W^{1,1}(\mathbb R)$ and $u\in\dot{BV}(\mathbb R)$. It remains to prove that $[u]_{\dot W_{1,\gamma}}<\infty$.

For $\lambda,h\in (0,\infty)$, let
$$
m_{\lambda,\gamma}(h):=\left|\left\{x\in\mathbb R: |u(x+h)-u(x)|>\lambda h^{1+\gamma}\right\}\right|.
$$
Then
$$\nu_{1,\gamma}(E_{\lambda,\gamma}[u])=2\int_0^\infty m_{\lambda,\gamma}(h)h^{\gamma-1}\,dh.$$
The fact that $0\le u\le \frac12$ implies that
$m_{\lambda,\gamma}(h)=0$
if $\lambda h^{1+\gamma}\ge \frac12$. Thus, in what follows, we assume that
$\lambda h^{1+\gamma}\in(0,\frac12)$.

Next, we split the estimate according to the size of $h$.
We first consider the case $h\in[\frac1{4e},\infty)$.
If $|u(x+h)-u(x)|>\lambda h^{1+\gamma}$,
then one of $x$ and $x+h$ belongs to $\operatorname{supp} u$,
which further implies that $m_{\lambda,\gamma}(h)\le\frac4e$.
Using the assumption $\lambda h^{1+\gamma}\in(0,\frac12)$ and $h\in[\frac1{4e},\infty)$,
we find that $\lambda\in(0,\frac12(4e)^{1+\gamma})$.
Combining all these conclusions, we conclude that
\begin{align}\label{2140-1}
\lambda\int_{\frac1{4e}}^{\infty}m_{\lambda,\gamma}(h)h^{\gamma-1}\,dh
<\frac12(4e)^{1+\gamma}\int_{\frac1{4e}}^{\infty}\frac4eh^{\gamma-1}\,dh
\lesssim1,
\end{align}
where the implicit positive constant is independent of $\lambda$.

We next consider the case $h\in(0,\frac1{4e})$.
If $|u(x+h)-u(x)|>\lambda h^{1+\gamma}$, then $x\in(-h,\frac2e)$.
Now, we split the range of $x$ into three parts.
Let $I_1(h):=(-h,h)$, $I_2(h):=(h,\frac1e-h)$, and $I_3(h):=(\frac1e-h,\frac2e)$.
For any $j\in\{1,2,3\}$, let
$$m_{\lambda,\gamma}^{(j)}(h):=\left|\left\{x\in I_j(h): |u(x+h)-u(x)|>\lambda h^{1+\gamma}\right\}\right|.$$

We first consider the region $I_1(h)$.
We have $m_{\lambda,\gamma}^{(1)}(h)\le2h$. If $\gamma\in(-1,0)$, then
$\lambda h^{1+\gamma}\in(0,\frac12)$ implies $h\in(0,(\frac1{2\lambda})^{\frac1{1+\gamma}})$,
and hence
\begin{align}\label{2140-22}
\lambda\int_0^{\frac1{4e}}m_{\lambda,\gamma}^{(1)}(h)h^{\gamma-1}\,dh
&\le
2\lambda\int_0^{\min\{(4e)^{-1},(\frac1{2\lambda})^{\frac1{1+\gamma}}\}}h^\gamma\,dh
\lesssim1.
\end{align}
It remains to consider $\gamma=-1$. In this case, $\lambda\in(0,\frac12)$.
By the monotonicity of $u$
on $(-\infty,\frac1e]$, we conclude that, for any
$h\in(0,\frac1{4e})$ and $x\in I_1(h)$,
$$|u(x+h)-u(x)|\le u(2h)=\log^{-1}\left(\frac e{2h}\right).$$
Thus, $|u(x+h)-u(x)|>\lambda$ implies
$h>\frac e2 e^{-1/\lambda}$, and hence
\begin{align}\label{2140-2}
\lambda\int_0^{\frac1{4e}}m_{\lambda,\gamma}^{(1)}(h)h^{\gamma-1}\,dh
\le\lambda\int_0^{\frac1{4e}}2h\mathbf 1_{\{h>\frac e2e^{-1/\lambda}\}}h^{\gamma-1}\,dh
\lesssim1,
\end{align}
where the implicit positive constant is independent of $\lambda$.

We next consider the region $I_2(h)$.
For any $x\in I_2(h)$, the concavity of $u$ on $(0,\frac1e)$
implies that
$$u(x+h)-u(x)\le hu'(x)=
\frac h{x}\log^{-2}\left(\frac ex\right).$$
Thus, if $x\in I_2(h)$ satisfies $|u(x+h)-u(x)|>\lambda h^{1+\gamma}$,
then
\begin{align}\label{949}
x\log^2\left(\frac ex\right)<\frac{h^{-\gamma}}\lambda.
\end{align}
Let $\widetilde h_\lambda:=\min\{(\frac\lambda e)^{\frac1{-\gamma}},\frac1{4e}\}$,
$A:=h^{-\gamma}\lambda^{-1}$, and $L:=\log(\frac eA)$.
For any $h\in(0,\widetilde h_\lambda)$, we have $A<\frac1e$ and hence $L>2$.
Using the fact that $\Phi(t):=t\log^2(\frac et)$ is increasing on
$(0,\frac1e)$ and
$$\Phi\left(4AL^{-2}\right)
=
4AL^{-2}(L+2\log L-\log4)^2
\ge A,$$
we conclude that, if $\Phi(x)<A$, then $x<4AL^{-2}$. Equivalently,
if \eqref{949} holds, then
$$x<4h^{-\gamma}\lambda^{-1}
\log^{-2}\left(\frac{e\lambda}{h^{-\gamma}}\right).$$
Thus, for any $h\in(0,\widetilde h_\lambda)$,
$m_{\lambda,\gamma}^{(2)}(h)
\le4h^{-\gamma}\lambda^{-1}\log^{-2}(\frac{e\lambda}{h^{-\gamma}}).$
These, combined with the estimate $m_{\lambda,\gamma}^{(2)}(h)\le|I_2(h)|\le \frac1e$,
further imply that
\begin{align}\label{2140-3}
\lambda\int_0^{\frac1{4e}}m_{\lambda,\gamma}^{(2)}(h)h^{\gamma-1}\,dh
\le4\int_0^{\widetilde h_\lambda}\frac{dh}{h\log^2(\frac{e\lambda}{h^{-\gamma}})}+\frac\lambda e\int_{\widetilde h_\lambda}^{\frac1{4e}}h^{\gamma-1}\,dh
\lesssim1,
\end{align}
where the implicit positive constant is independent of $\lambda$.

Finally, using the fact that $u$ is Lipschitz on $[\frac3{4e},\infty)$,
we find that there exists a positive constant $C$ such that,
for any $x\in I_3(h)$, $|u(x+h)-u(x)|\le Ch$.
If $m_{\lambda,\gamma}^{(3)}(h)>0$, then
$h>(\frac\lambda C)^{\frac1{-\gamma}}$,
which further implies that
\begin{align}\label{2140-4}
\lambda\int_0^{\frac1{4e}}m_{\lambda,\gamma}^{(3)}(h)h^{\gamma-1}\,dh
\lesssim\lambda\int_{0}^{\frac1{4e}}
\mathbf1_{\{h>(\frac\lambda C)^{\frac1{-\gamma}}\}}
h^{\gamma-1}\,dh
\lesssim1,
\end{align}
where the implicit positive constant is independent of $\lambda$.

Combining \eqref{2140-1}, \eqref{2140-22}, \eqref{2140-2}, \eqref{2140-3},
and \eqref{2140-4},
we conclude $[u]_{\dot{W}_{1,\gamma}}<\infty$.
This finishes the proof of Lemma~\ref{lem:log-endpoint}.
\end{proof}

It remains to show $u\notin\dot H^{1,1}(\mathbb R)$. To this end, we need the following lemma,
which is a part of \cite[Corollary 2.4.7]{g14m}.

\begin{lemma}\label{cor:H-H1}
An integrable function belongs to the Hardy space $H^1(\mathbb R)$
if and only if its Hilbert transform is integrable.
\end{lemma}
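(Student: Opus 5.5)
The statement is the classical characterization of $H^1(\mathbb R)$ by the Hilbert transform. The paper quotes it from \cite[Corollary 2.4.7]{g14m}, so the plan is to reconstruct the standard argument. Throughout, $\mathcal H f$ is understood in the a.e.\ sense of Remark~\ref{rem:Hilbert-Lp-ae}.

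\textbf{Necessity.} Suppose $f\in L^1(\mathbb R)\cap H^1(\mathbb R)$. I would use the atomic decomposition $f=\sum_k\lambda_k a_k$ with $\sum_k|\lambda_k|\lesssim\|f\|_{H^1}$. Each $a_k$ is supported on an interval $I_k$, satisfies $\|a_k\|_{L^2}\le|I_k|^{-1/2}$, and has vanishing mean. For each atom one shows $\|\mathcal H a_k\|_{L^1}\lesssim1$ in two pieces:
\begin{itemize}
\item on $2I_k$, combine the $L^2$ boundedness of $\mathcal H$ with Cauchy--Schwarz;
\item off $2I_k$, subtract $\frac1{x-c_k}$ (with $c_k$ the center of $I_k$) using the cancellation of $a_k$, which gives a kernel bound $|I_k|/|x-c_k|^2$, integrable away from $I_k$.
\end{itemize}
Summing over $k$ gives $\mathcal H f\in L^1$. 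The series converges in $L^1$, and $\mathcal H$ is continuous from $L^1$ into weak $L^1$, so $\mathcal H f=\sum_k\lambda_k\mathcal H a_k$ a.e.

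\textbf{Sufficiency.} Suppose $f$ and $\mathcal H f$ are both integrable. Let $P_t$ and $Q_t$ be the Poisson and conjugate Poisson kernels, and set
$$F(x+it):=P_t*f(x)+iQ_t*f(x)=P_t*\bigl(f+i\mathcal H f\bigr)(x).$$
The identity $Q_t*f=P_t*\mathcal H f$ is checked on the Fourier side, or by density together with the weak-type $(1,1)$ bound for $\mathcal H$. Then $F$ is holomorphic on the upper half plane and $\sup_t\|F(\cdot+it)\|_{L^1}<\infty$, so $F$ belongs to the holomorphic Hardy space. The key step is to show $\sup_t|F(x+it)|\in L^1$. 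For this, take $0<q<1$ (say $q=\tfrac12$). Since $|F|^q$ is subharmonic, $|F(\cdot+i\varepsilon+it)|^q\le P_t*|F(\cdot+i\varepsilon)|^q$. The Hardy--Littlewood maximal function is bounded on $L^{1/q}$, and $|F(\cdot+i\varepsilon)|^q\in L^{1/q}$ uniformly in $\varepsilon$, which gives $\|\sup_t|F(\cdot+it)|\|_{L^1}\lesssim\sup_t\|F(\cdot+it)\|_{L^1}$. Then let $\varepsilon\to0$. Finally $\sup_t|P_t*f|\le\sup_t|F|$, so the Poisson maximal function of $f$ is integrable. Since the Poisson maximal characterization is equivalent to the $\mathcal M_\varphi$ characterization of $H^1$, this gives $f\in H^1(\mathbb R)$.

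\textbf{Main obstacle.} The subharmonicity and maximal-function step in the sufficiency direction is the hard part. It needs care in the $\varepsilon$-shifted limiting argument, and it uses the equivalence of the Poisson and smooth maximal characterizations, which is standard Fefferman--Stein theory.
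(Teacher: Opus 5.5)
\textbf{Overall.} The paper gives no proof of this lemma. It only quotes \cite[Corollary 2.4.7]{g14m}, and it only ever uses the implication $f\in H^1(\mathbb R)\Rightarrow\mathcal Hf\in L^1(\mathbb R)$ (in Theorem~\ref{thm:1d-strict}). Your necessity argument proves exactly this implication, directly for the a.e.\ principal value, and it is correct. The two atomic estimates are right. Identifying $\mathcal Hf$ with $\sum_k\lambda_k\mathcal Ha_k$ through $L^1$ convergence, the weak type $(1,1)$ bound and uniqueness of limits in measure is legitimate.

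\textbf{The gap.} Your sufficiency part has a genuine gap, and it sits at the step you treat as routine: the identity $Q_t*f=P_t*\mathcal Hf$. This identity is your only source of $\sup_t\|F(\cdot+it)\|_{L^1}<\infty$. Under your convention ($\mathcal Hf$ is the a.e.\ limit of $\mathcal H^{(\varepsilon)}f$, as in Definition~\ref{def:FT-H} and Remark~\ref{rem:Hilbert-Lp-ae}), neither justification you offer works.
\begin{itemize}
\item The Fourier-side check presupposes $\widehat{\mathcal Hf}(\xi)=-i\,\mathrm{sgn}(\xi)\widehat f(\xi)$ for this pointwise-defined integrable function. That is, it assumes $\mathcal Hf$ coincides with the distributional Hilbert transform of $f$, which is precisely the nontrivial content of this direction.
\item The density route gives $\mathcal Hf_n\to\mathcal Hf$ only in $L^{1,\infty}$, i.e.\ in measure. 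This cannot be passed through $P_t*(\cdot)$, since there is no domination or uniform integrability (weak-$L^1$ functions need not even be locally integrable). For instance, with $f_n:=P_{1/n}*f$ one has $\mathcal Hf_n=Q_{1/n}*f$, so the uniform integrability you would need is essentially the bound $\sup_t\|Q_t*f\|_{L^1}<\infty$ you are trying to prove.
\end{itemize}
No such limiting argument can suffice. The information it uses about $w:=Q_t*f-P_t*\mathcal Hf$ is: harmonic, a.e.\ vanishing boundary values, weak-$L^1$ maximal function, and boundary data whose Fourier transform tends to $0$. The Poisson integral of any nonzero singular measure whose Fourier transform tends to $0$ has all of these properties and is not zero.

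\textbf{How to close it.} What is missing is the classical Smirnov-class step.
\begin{itemize}
\item Transplant to the disc by a Cayley map. Kolmogorov's weak type estimate then puts $F$ in $H^q$ for every $q\in(0,1)$, hence in the Smirnov class.
\item Its boundary function is $f+i\mathcal Hf$ a.e., because $Q_t*f\to\mathcal Hf$ a.e.
\item Consequently $\log|F|$ is majorized by the Poisson integral of $\log|f+i\mathcal Hf|$. Jensen's inequality then gives $|F(x+it)|^q\le P_t*|f+i\mathcal Hf|^q(x)$.
\end{itemize}
From there, your maximal-function step yields $\sup_t|F(\cdot+it)|\in L^1(\mathbb R)$ directly, with no $\varepsilon$-shift and no prior uniform $L^1$ bound. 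That step is the routine part, not the main obstacle you flagged.

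Alternatively, you can define $\mathcal Hf$ for $f\in L^1(\mathbb R)$ as the tempered distribution with Fourier transform $-i\,\mathrm{sgn}(\xi)\widehat f(\xi)$. Then your Fourier check is legitimate and your argument is complete as written. Your atomic argument shows that the two readings agree when $f\in H^1(\mathbb R)$, which is all the paper needs.
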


\begin{lemma}\label{lem:H-not-L1}
Let $f=u'$ be the same as in \eqref{log-examplef}. Then $\mathcal H f\notin L^1(\mathbb R)$
and hence $u\notin \dot{H}^{1,1}(\mathbb R)$.
\end{lemma}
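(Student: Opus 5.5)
We will show that $\mathcal H f(x)$ behaves like $\frac{1}{\pi x}\log^{-1}(\frac ex)$ for small negative $x$. That function is not integrable near $0^-$, since $\int_0^{\delta}\frac{dt}{t\log(e/t)}=\infty$.

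\textbf{Lower bound for $\mathcal H f$ near $0^-$.} Fix $x\in(-\frac1{4e},0)$ and write $t:=-x>0$. Since $\operatorname{supp} f\subset[0,\frac2e]$ and $x\notin \operatorname{supp} f$, no principal value is needed:
$$\mathcal H f(x)=\frac1\pi\int_0^{2/e}\frac{f(y)}{x-y}\,dy=-\frac1\pi\int_0^{2/e}\frac{f(y)}{t+y}\,dy .$$
The contribution of $(\frac1e,\frac2e)$ equals $\frac1\pi\cdot\frac e2\int_{1/e}^{2/e}\frac{dy}{t+y}$, which is bounded uniformly in $t$. The main part is
$$\int_0^{1/e}\frac{dy}{(t+y)\,y\log^2(e/y)}\ \ge\ \int_t^{1/e}\frac{dy}{2y^2\log^2(e/y)} .$$
To bound the right-hand side from below, restrict to $y\in(t,2t)$, where $y^{-2}\ge \frac1{4t^2}$ and $\log(e/y)\le\log(e/t)$. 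This gives a lower bound $\gtrsim \frac{1}{t\log^2(e/t)}$. That lower bound is still integrable near $0$, so it is too weak. We therefore use the cancellation $\int f=0$ instead.

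\textbf{Using cancellation.} Write
$$-\pi\mathcal H f(x)=\int\frac{f(y)}{t+y}dy=\int f(y)\Big[\frac1{t+y}-\frac1y\Big]dy+\int_0^{2/e}\frac{f(y)}{y}dy .$$
The second integral diverges, so this splitting fails. The cleaner route uses the primitive. Integrating by parts with $u(0)=u(\frac2e)=0$ gives
$$\int\frac{f(y)}{t+y}dy=\int_0^{2/e}\frac{u(y)}{(t+y)^2}dy\ \ge\ \int_0^{1/e}\frac{\log^{-1}(e/y)}{(t+y)^2}dy .$$
On $(t,1/e)$ we have $(t+y)^2\le 4y^2$ and $\log^{-1}(e/y)\ge\log^{-1}(e/t)$. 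Hence the integral is at least
$$\frac{1}{4\log(e/t)}\int_t^{1/e}\frac{dy}{y^2}\ \gtrsim\ \frac{1}{t\log(e/t)}\qquad\text{for } t\le\tfrac1{2e}.$$
Therefore $|\mathcal H f(x)|\gtrsim \frac{1}{|x|\log(e/|x|)}$ on $(-\frac1{4e},0)$, and so $\mathcal H f\notin L^1(\mathbb R)$. This integration-by-parts step is the key point: it is where the one power of logarithmic decay is lost. The boundary terms vanish because $u$ is absolutely continuous, bounded, and $u(y)/(t+y)\to0$ as $y\to0^+$ for fixed $t>0$.

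\textbf{Conclusion.} By Lemma~\ref{cor:H-H1}, $f\notin H^1(\mathbb R)$. Since $f=u'$ is the distributional derivative of $u$, the definition gives $\|u\|_{\dot H^{1,1}(\mathbb R)}=\|f\|_{H^1(\mathbb R)}=\infty$, so $u\notin\dot H^{1,1}(\mathbb R)$.
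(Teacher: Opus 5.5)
Your proof is correct and follows the paper's argument: after discarding your two exploratory attempts, you integrate by parts using $u(0)=u(\frac2e)=0$ to get $|\mathcal Hf(x)|=\frac1\pi\int_0^{2/e}u(y)(|x|+y)^{-2}\,dy$ for $x<0$, and then use $u(y)\ge\log^{-1}(e/|x|)$ for $y\ge|x|$ to obtain the non-integrable lower bound $\frac{1}{|x|\log(e/|x|)}$ up to a constant, exactly as the paper does. The only difference is cosmetic: the paper restricts to $y\in[|x|,2|x|]$, where $(|x|+y)^2\le 9x^2$, while you integrate over $(|x|,\frac1e)$ using $(|x|+y)^2\le4y^2$, and both give the same lower bound.
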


\begin{proof}
From the fact that $\operatorname{supp} f\subset[0,\frac2e]$ and Remark~\ref{rem:Hilbert-Lp-ae}, it follows that, for any $x\in(-\infty,0)$,
$$
\mathcal H f(x)=\frac1\pi \int_0^{\frac2e}\frac{f(y)}{x-y}\,dy.
$$
Using the fact that $u(0)=u(\frac2e)=0$ and integration by parts, we obtain,
for any $x\in(-\infty,0)$,
$$
\mathcal H f(x)
=-\frac1\pi\int_0^{\frac2e}\frac{u(y)}{(x-y)^2}\,dy.
$$
For any $x\in(-\frac1{2e},0)$ and $y\in[-x,-2x]$, we have
$u(y)\ge\frac1{\log(-\frac ex)}$ and $(-x+y)^2\le9x^2$, which
further implies that
$$
|\mathcal H f(x)|
\ge\frac1\pi\int_{-x}^{-2x}\frac{u(y)}{(-x+y)^2}\,dy
\gtrsim\frac1{-x\log(-\frac ex)}.
$$
From this, we deduce that
$$
\int_{-\frac1{2e}}^0|\mathcal H f(x)|\,dx
\gtrsim\int_0^{\frac1{2e}}\frac{dt}{t\log(\frac et)}
=\infty,
$$
which completes the proof of Lemma~\ref{lem:H-not-L1}.
\end{proof}

\begin{theorem}\label{thm:1d-strict}
For any $\gamma\in[-1,0)$,
$$
\dot H^{1,1}(\mathbb R)\subsetneqq \dot{W}^{1,1}_1(\gamma)\quad\text{and}\quad
\dot H^{1,1}(\mathbb R)\subsetneqq \dot{BV}_1(\gamma).
$$
\end{theorem}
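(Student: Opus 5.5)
The plan is to assemble Theorem~\ref{thm:1d-strict} from the results already proved for $N=1$. The inclusions $\dot H^{1,1}(\mathbb R)\subset\dot{W}^{1,1}_1(\gamma)$ and $\dot H^{1,1}(\mathbb R)\subset\dot{BV}_1(\gamma)$ follow directly from Theorem~\ref{thm:positive} applied with $N=1$. Strictness will come from exhibiting the single function $u$ from \eqref{log-exampleu}, which lies in both spaces but not in $\dot H^{1,1}(\mathbb R)$.

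First, invoke Lemma~\ref{lem:log-endpoint}, which gives $u\in\dot{W}^{1,1}_1(\gamma)$ and $u\in\dot{BV}_1(\gamma)$ for every $\gamma\in[-1,0)$.

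Second, show $u\notin\dot H^{1,1}(\mathbb R)$; Lemma~\ref{lem:H-not-L1} already asserts this, and I would make the reasoning explicit as follows.
\begin{itemize}
\item Suppose $u\in\dot H^{1,1}(\mathbb R)$. By Definition~\ref{def:H11}, its distributional derivative $u'$ lies in $H^1(\mathbb R)$.
\item Since $u$ is absolutely continuous, this distributional derivative is the integrable function $f$ from \eqref{log-examplef}.
\item Lemma~\ref{cor:H-H1} then gives $\mathcal H f\in L^1(\mathbb R)$, contradicting Lemma~\ref{lem:H-not-L1}.
\end{itemize}

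One point needs a brief check: that $u$ is a legitimate element of $\mathcal S'(\mathbb R)$ and that its distributional derivative agrees with the pointwise a.e.\ derivative $f$. Both hold because $u$ is bounded, compactly supported, and absolutely continuous, so integration by parts against test functions is valid.

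Hence $u\in[\dot{W}^{1,1}_1(\gamma)\cap\dot{BV}_1(\gamma)]\setminus\dot H^{1,1}(\mathbb R)$, and both inclusions are strict. There is no real obstacle here, since all the analytic work (the BSVY estimate and the Hilbert transform lower bound) was done in the preceding lemmas; the only care needed is the identification of the distributional derivative noted above.
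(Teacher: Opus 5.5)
Your proposal is correct and follows the paper's proof essentially verbatim: the inclusion from Theorem~\ref{thm:positive} with $N=1$, membership of $u$ from \eqref{log-exampleu} via Lemma~\ref{lem:log-endpoint}, and exclusion from $\dot H^{1,1}(\mathbb R)$ by combining Lemma~\ref{cor:H-H1} with Lemma~\ref{lem:H-not-L1}. Your extra check that $u\in\mathcal S'(\mathbb R)$ has distributional derivative equal to $f$ makes explicit a step the paper uses without comment.
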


\begin{proof}
Fix $\gamma\in[-1,0)$. By the case $N=1$ of Theorem~\ref{thm:positive},
we have $\dot H^{1,1}(\mathbb R)\subset\dot{W}^{1,1}_1(\gamma)$ and
$\dot H^{1,1}(\mathbb R)\subset\dot{BV}_1(\gamma)$.
Let $u$ be the same as in \eqref{log-exampleu}.
From Lemma~\ref{lem:log-endpoint}, we infer that $u\in\dot{W}^{1,1}_1(\gamma)\subset\dot{BV}_1(\gamma)$.
Assume that $u\in\dot H^{1,1}(\mathbb R)$.
Using Definition~\ref{def:H11}, we conclude that $f=u'\in H^1(\mathbb R)$.
Then Lemma~\ref{cor:H-H1}, together with Lemma~\ref{lem:H-not-L1},
yields a contradiction.
From this, we deduce that $u\notin\dot H^{1,1}(\mathbb R)$.
This finishes the proof of Theorem~\ref{thm:1d-strict}.
\end{proof}

We lift the conclusion from the one-dimensional case to the $N$-dimensional case.

\begin{lemma}\label{lem:product-W}
Let $N\in\mathbb N\cap[2,\infty)$, let $\gamma\in[-1,0)$, let $u\in L^\infty(\mathbb R)\cap \dot{W}^{1,1}_1(\gamma)$ be compactly supported, and let
$\phi\in C_{\mathrm c}^\infty(\mathbb R^{N-1})$. For any $(x_1,x')\in \mathbb R\times\mathbb R^{N-1}=\mathbb R^N$,
let $U(x_1,x'):=u(x_1)\phi(x').$
Then $U\in \dot{W}^{1,1}(\gamma)$ and $U\in \dot{BV}(\gamma).$
\end{lemma}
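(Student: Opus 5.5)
The plan is to prove that $U\in\dot W^{1,1}(\mathbb R^N)$ directly and then to bound $[U]_{\dot W_{N,\gamma}}$ by splitting the difference into a part coming from $u$ and a part coming from $\phi$. Membership in $\dot{BV}(\gamma)$ then follows immediately, since $\dot W^{1,1}(\mathbb R^N)\subset\dot{BV}(\mathbb R^N)$.

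The gradient part is routine. Because $u\in\dot W^{1,1}(\mathbb R)$ is bounded with compact support and $\phi\in C_{\mathrm c}^\infty$, we have $\partial_1U=u'\phi$ and $\nabla_{x'}U=u\nabla\phi$. Both lie in $L^1(\mathbb R^N)$, so $U\in\dot W^{1,1}(\mathbb R^N)$.

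For the seminorm, write $x=(x_1,x')$, $y=(y_1,y')$ and decompose
$$U(x)-U(y)=[u(x_1)-u(y_1)]\phi(x')+u(y_1)[\phi(x')-\phi(y')].$$
By the quasi-triangle property of level sets we have $E_{\lambda,\gamma}[U]\subset A_\lambda\cup B_\lambda$, where $A_\lambda$ is the set where the first term exceeds $\frac\lambda2|x-y|^{1+\gamma}$ and $B_\lambda$ is the set where the second term does. It therefore suffices to show $\lambda\nu_\gamma(A_\lambda)+\lambda\nu_\gamma(B_\lambda)\lesssim1$ uniformly in $\lambda$.

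To handle $B_\lambda$, use $|\phi(x')-\phi(y')|\le\|\nabla\phi\|_\infty|x'-y'|$ and $|u|\le\|u\|_\infty$. On $B_\lambda$ this gives $\lambda|x-y|^{\gamma}\lesssim1$. Note that $y_1\in\operatorname{supp}u$, which is bounded, and that either $x'$ or $y'$ lies in $\operatorname{supp}\phi$. Integrating over $h=x-y$ and using $|h|^\gamma\lesssim\lambda^{-1}$, i.e.\ $|h|\gtrsim\lambda^{1/\gamma}$ since $\gamma<0$, we get
$$\int_{|h|\gtrsim\lambda^{1/\gamma}}|h|^{\gamma-N}\,dh\sim\lambda^{-1}.$$
This estimate is exactly the one in the proof of Theorem~\ref{thm:positive} with $g$ replaced by a bounded compactly supported function. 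The only subtlety is that the support restriction applies to $y_1$ and to one of $x',y'$, not to all of $y$. I will symmetrize: treat $y'\in\operatorname{supp}\phi$ and $x'\in\operatorname{supp}\phi$ separately, and swap the roles of $x$ and $y$ if needed. In each case the outer integral in $y$ runs over a bounded set, so $\lambda\nu_\gamma(B_\lambda)\lesssim1$. A neater alternative is to invoke Lemma~\ref{lem:KS-exact}-type reasoning with $g=C\mathbf 1_K$ for a large compact $K$.

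The main obstacle is $A_\lambda$, where the one-dimensional quantity has to be transferred to $N$ dimensions. On $A_\lambda$ we have $|u(x_1)-u(y_1)|\,|\phi(x')|>\frac\lambda2|x-y|^{1+\gamma}$. Set $t=x_1-y_1$ and $s=x'-y'$, so $|x-y|=(t^2+|s|^2)^{1/2}\ge|t|$. Since $1+\gamma\ge0$, this gives $|x-y|^{1+\gamma}\ge|t|^{1+\gamma}$, and hence $(x_1,y_1)\in E^{(1)}_{\lambda/(2\|\phi\|_\infty),\gamma}[u]$. For fixed $t$, with $x'$ restricted to $\operatorname{supp}\phi$ (bounded), I integrate the kernel $(t^2+|s|^2)^{(\gamma-N)/2}$ over $s\in\mathbb R^{N-1}$. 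Because $N-\gamma>N-1$, scaling gives
$$\int_{\mathbb R^{N-1}}(t^2+|s|^2)^{\frac{\gamma-N}{2}}\,ds=c_{N,\gamma}|t|^{\gamma-1}.$$
Hence
$$\nu_{N,\gamma}(A_\lambda)\le|\operatorname{supp}\phi|\,c_{N,\gamma}\,\nu_{1,\gamma}\bigl(E^{(1)}_{\lambda/(2\|\phi\|_\infty),\gamma}[u]\bigr),$$
and multiplying by $\lambda$ gives $\lambda\nu_{N,\gamma}(A_\lambda)\lesssim[u]_{\dot W_{1,\gamma}}<\infty$. I expect the delicate point to be exactly this monotonicity step, $|x-y|^{1+\gamma}\ge|t|^{1+\gamma}$, which needs $\gamma\ge-1$ (at $\gamma=-1$ it is trivial), together with keeping $\lambda$ uniform. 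Combining the bounds for $A_\lambda$ and $B_\lambda$ completes the proof.
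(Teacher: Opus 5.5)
Your proposal is correct and is essentially the paper's proof: the same splitting $U(x)-U(y)=[u(x_1)-u(y_1)]\phi(x')+u(y_1)[\phi(x')-\phi(y')]$, the same treatment of the $u$-term via $|x-y|^{1+\gamma}\ge|x_1-y_1|^{1+\gamma}$ and $\int_{\mathbb R^{N-1}}(t^2+|s|^2)^{\frac{\gamma-N}{2}}\,ds=c|t|^{\gamma-1}$, and the same handling of the $\phi$-term by the Lipschitz bound plus polar coordinates. There are two minor slips that do not affect the argument: the threshold should read $|h|\gtrsim\lambda^{-1/\gamma}$, which is what yields the bound $\lambda^{-1}$; and the aside with $g=C\mathbf 1_K$ does not work literally, because $y_1\in\operatorname{supp}u$ and $x'\in\operatorname{supp}\phi$ do not force $x$ or $y$ into a fixed compact set --- but your direct integration over $h$ is the right way to handle $B_\lambda$ anyway.
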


\begin{proof}
Let $J:=\operatorname{supp} u$ and $K:=\operatorname{supp}\phi$. The compact supports of $u$ and $\phi$ imply that $\operatorname{supp} U\subset J\times K$. Moreover,
$$
\nabla U(x_1,x')=\left(u'(x_1)\phi(x'),u(x_1)\nabla_{x'}\phi(x')\right)
$$
for almost every $(x_1,x')\in\mathbb R^N$. The assumptions
that $u'\in L^1(\mathbb R)$, $u\in L^\infty(\mathbb R)$,
and $J,K$ have finite measure imply $\nabla U\in L^1(\mathbb R^N;\mathbb R^N)$.
Hence, $U\in\dot W^{1,1}(\mathbb R^N)\cap\dot {BV}(\mathbb R^N)$.

For $x=(x_1,x')$ and $h=(h_1,h')$, we write
\begin{align*}
U(x+h)-U(x)&=\phi(x'+h')[u(x_1+h_1)-u(x_1)]+u(x_1)[\phi(x'+h')-\phi(x')]\\
&=:A(x,h)+B(x,h).
\end{align*}
The decomposition of $U(x+h)-U(x)$ further implies that
$$
E_{\lambda,\gamma}[U]
\subset
\left\{|A|>\frac\lambda2|h|^{1+\gamma}\right\}
\cup
\left\{|B|>\frac\lambda2|h|^{1+\gamma}\right\}.
$$

We first consider the term $A$.
If $|A(x,h)|>\frac\lambda2|h|^{1+\gamma}$, then $x'+h'\in K$ and
$|u(x_1+h_1)-u(x_1)|>\frac{\lambda}{2\|\phi\|_{L^\infty}}|h_1|^{1+\gamma}.$
For $h_1\ne0$, using the change of variables $h'=|h_1|z$, we obtain
$$
\int_{\mathbb R^{N-1}}|(h_1,h')|^{\gamma-N}\,dh'=C|h_1|^{\gamma-1}.
$$
From this and the change of variables $z=x'+h'$, it follows that
$$
\begin{aligned}
\nu_\gamma\left(\left\{|A|>\frac\lambda2|h|^{1+\gamma}\right\}\right)
\lesssim|K|\int_{\mathbb R}\int_{\mathbb R}
\mathbf 1_{\left\{|u(x_1+h_1)-u(x_1)|>\frac{\lambda}{2\|\phi\|_{L^\infty}}|h_1|^{1+\gamma}\right\}}
|h_1|^{\gamma-1}\,dx_1\,dh_1.
\end{aligned}
$$
By the definition of $[u]_{\dot W_{1,\gamma}}$, we conclude that
\begin{align}\label{2304-1}
\sup_{\lambda\in (0,\infty)}\lambda\,\nu_\gamma\left(\left\{|A|>\frac\lambda2|h|^{1+\gamma}\right\}\right)\lesssim [u]_{\dot W_{1,\gamma}}<\infty.
\end{align}

Next, we consider the term $B$.
From the fact that $\phi$ is Lipschitz, we infer that there exists a positive constant $C$ such that
$|B(x,h)|\le C|h|$. If $|B(x,h)|>\frac\lambda2|h|^{1+\gamma}$,
then $x_1\in J$, one of $x'$ and $x'+h'$ belongs to $K$,
and $|h|>(\frac\lambda C)^{-\frac1{\gamma}}$. Then, using polar coordinates, we have
\begin{align}\label{2304-2}
\lambda\,\nu_\gamma\left(\left\{|B|>\frac\lambda2|h|^{1+\gamma}\right\}\right)
\lesssim \lambda|J||K|\int_{|h|>(\frac\lambda C)^{-\frac1{\gamma}}}|h|^{\gamma-N}\,dh
\lesssim1.
\end{align}

Combining \eqref{2304-1} and \eqref{2304-2},
we conclude $[U]_{\dot W_\gamma}<\infty$,
which hence completes the proof of Lemma~\ref{lem:product-W}.
\end{proof}

We recall the following definition of atoms.

\begin{definition}\label{def:H1-atomic}
A function $a$ is called an \emph{$H^1(\mathbb R^N)$-atom} supported
in a cube $Q$ if $\operatorname{supp} a\subset Q$,
$\|a\|_{L^\infty(\mathbb R^N)}\le |Q|^{-1}$, and $\int_{\mathbb R^N}a(x)\,dx=0$.
\end{definition}

The following lemma is the atomic characterization of $H^1(\mathbb R^N)$; see, for instance, \cite[Chapter~III]{SteinHA}.

\begin{lemma}\label{lem:H1-models}
A function $f$ belongs to $H^1(\mathbb R^N)$
if and only if there exist a sequence $\{a_j\}_{j\in\mathbb N}$ of
$H^1(\mathbb R^N)$-atoms
and a sequence $\{c_j\}_{j\in\mathbb N}\subset\mathbb C$ such that
$f=\sum_{j=1}^\infty c_ja_j$ in $L^1(\mathbb R^N)$ and
$\sum_{j=1}^\infty |c_j|<\infty$.
Moreover,
$\|f\|_{H^1(\mathbb R^N)}\sim
\inf\sum_{j=1}^\infty |c_j|,$
where the infimum is taken over all atomic decompositions and
the positive equivalence constants are independent of $f$.
\end{lemma}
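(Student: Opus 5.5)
This is a classical result, cited from \cite[Chapter~III]{SteinHA}, so the plan is to reproduce the standard argument in two directions: first the sufficiency of atomic decompositions, then the existence of such a decomposition for every $f\in H^1(\mathbb R^N)$.

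\emph{Sufficiency.} It suffices to show that $\|\mathcal M_\varphi a\|_{L^1(\mathbb R^N)}\le C$ for every $H^1(\mathbb R^N)$-atom $a$, with $C$ independent of $a$. Then the $L^1$-convergent series $f=\sum_j c_ja_j$ satisfies
$$\|f\|_{H^1(\mathbb R^N)}\le\sum_j|c_j|\,\|\mathcal M_\varphi a_j\|_{L^1(\mathbb R^N)}\lesssim\sum_j|c_j|.$$
Here we use that $\mathcal M_\varphi$ is sublinear, and that $L^1$ convergence gives convergence of $f*\varphi_t$ pointwise for each fixed $t$.

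To bound $\mathcal M_\varphi a$, let $a$ be supported in a cube $Q$ with center $x_Q$ and edge length $\ell$, and let $Q^*$ be a fixed dilate of $Q$.
\begin{itemize}
\item On $Q^*$, we use $\mathcal M_\varphi a\lesssim Ma$, the Hardy--Littlewood maximal function, together with the $L^2$-boundedness of $M$ and Hölder's inequality. This gives
$$\int_{Q^*}\mathcal M_\varphi a\le|Q^*|^{1/2}\|a\|_{L^2(\mathbb R^N)}\lesssim|Q|^{1/2}|Q|^{-1/2}=1.$$
\item Off $Q^*$, we use $\int_{\mathbb R^N}a=0$ to write
$$a*\varphi_t(x)=\int_Q a(y)\left[\varphi_t(x-y)-\varphi_t(x-x_Q)\right]dy.$$
Since $\varphi\in C_{\rm c}^\infty(\mathbb R^N)$, the mean value theorem bounds this by $\ell t^{-N-1}$. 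The support of $\varphi$ forces $t\gtrsim|x-x_Q|$ whenever the expression is nonzero. Hence $\mathcal M_\varphi a(x)\lesssim\ell|x-x_Q|^{-N-1}$, which is integrable off $Q^*$ with integral $\lesssim1$.
\end{itemize}

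\emph{Necessity.} Let $f\in H^1(\mathbb R^N)\subset L^1(\mathbb R^N)$. We replace $\mathcal M_\varphi$ by the grand maximal function $f^*$, using the known equivalence $\|f^*\|_{L^1}\sim\|f\|_{H^1}$, since $f^*$ is adapted to Calderón--Zygmund decompositions. For each $k\in\mathbb Z$:
\begin{itemize}
\item Let $\Omega_k:=\{f^*>2^k\}$. This set is open and of finite measure, and we take its Whitney decomposition $\Omega_k=\bigcup_jQ_{k,j}$.
\item With a smooth partition of unity $\{\eta_{k,j}\}_j$ adapted to these cubes, we form the Calderón--Zygmund decomposition $f=g_k+\sum_jb_{k,j}$. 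Here $b_{k,j}:=(f-c_{k,j})\eta_{k,j}$, the constant $c_{k,j}$ is chosen so that $\int b_{k,j}=0$, and $\|g_k\|_{L^\infty}\lesssim2^k$.
\end{itemize}
We then write $f=\sum_{k\in\mathbb Z}(g_{k+1}-g_k)$. This requires two facts:
\begin{itemize}
\item $g_k\to f$ in $L^1$ as $k\to\infty$, because $\|f-g_k\|_{L^1}\lesssim\int_{\Omega_k}f^*\to0$;
\item $g_k\to0$ as $k\to-\infty$, which holds uniformly and also in $L^1$ thanks to $f\in L^1$ and $|\Omega_k|<\infty$.
\end{itemize}
Next, we regroup $g_{k+1}-g_k=\sum_jA_{k,j}$, where each $A_{k,j}$ is supported in a fixed dilate $\widetilde Q_{k,j}$, satisfies $\int A_{k,j}=0$, and obeys $\|A_{k,j}\|_{L^\infty}\le C2^k$. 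This regrouping uses the standard corrections involving the cubes of generation $k+1$ that meet $Q_{k,j}$. Setting
$$a_{k,j}:=\frac{A_{k,j}}{C2^k|\widetilde Q_{k,j}|}\quad\text{and}\quad c_{k,j}:=C2^k|\widetilde Q_{k,j}|,$$
each $a_{k,j}$ is an $H^1(\mathbb R^N)$-atom (after enclosing $\widetilde Q_{k,j}$ in a cube). Bounded overlap of the Whitney cubes then gives
$$\sum_{k,j}|c_{k,j}|\lesssim\sum_k2^k|\Omega_k|\lesssim\|f^*\|_{L^1(\mathbb R^N)}\sim\|f\|_{H^1(\mathbb R^N)}.$$
Re-indexing the pairs $(k,j)$ by $\mathbb N$ gives the required sequence. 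Combined with the sufficiency part, this proves the stated equivalence of $\|f\|_{H^1(\mathbb R^N)}$ with the infimum over atomic decompositions.

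The main obstacle is the regrouping step: verifying that the corrected pieces $A_{k,j}$ have vanishing integrals and the uniform bound $C2^k$. This needs the finite-overlap and comparable-size properties of the Whitney cubes of consecutive generations, and the fact that $|c_{k,j}|\lesssim2^k$ (since $f^*\lesssim2^k$ at points at comparable distance outside $\Omega_k$). I would follow Stein's bookkeeping for this step rather than re-derive it.
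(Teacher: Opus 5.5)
Your proposal is correct. It reproduces the classical argument from Stein's Chapter~III that the paper cites without proof: for sufficiency, a uniform $H^1$ bound on atoms via the $L^2$ estimate on $Q^*$ and the cancellation bound $\mathcal M_\varphi a(x)\lesssim \ell|x-x_Q|^{-N-1}$ off $Q^*$; for necessity, Calder\'on--Zygmund decompositions at heights $2^k$ for the grand maximal function, followed by Stein's regrouping of $g_{k+1}-g_k$. Two cosmetic points: you reuse $c_{k,j}$ for both the Calder\'on--Zygmund constants and the atomic coefficients, and the claimed $L^1$ convergence $g_k\to0$ as $k\to-\infty$ actually rests on $2^k|\Omega_k|\to0$ (which follows from $f^*\in L^1$), although uniform convergence plus absolute $L^1$ convergence of the atomic series already identifies the limit as $f$.
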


\begin{lemma}\label{lem:projection-atom}
Let $N\in\mathbb N\cap[2,\infty)$. For any integrable function $F$ on $\mathbb R^N=\mathbb R\times\mathbb R^{N-1}$,
we define the linear operator $P$ by
$$
PF(x_1):=\int_{\mathbb R^{N-1}}F(x_1,x')\,dx'.
$$
If $A$ is an $H^1(\mathbb R^N)$-atom, then $PA$ is
an $H^1(\mathbb R)$-atom.
\end{lemma}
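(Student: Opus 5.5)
The plan is to verify directly the three conditions of Definition~\ref{def:H1-atomic} for $PA$ in dimension one. The candidate interval is the projection of the cube of $A$ onto the first coordinate. Let $A$ be an $H^1(\mathbb R^N)$-atom supported in a cube $Q$ of edge length $\ell$. Since edges are parallel to the axes, we can write $Q=I\times Q'$, where $I\subset\mathbb R$ is an interval of length $\ell$ and $Q'\subset\mathbb R^{N-1}$ is a cube of edge length $\ell$. Thus $|Q|=\ell^N$, $|I|=\ell$ and $|Q'|=\ell^{N-1}$.

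\emph{Step 1 (well-definedness and support).} Since $A\in L^\infty(\mathbb R^N)$ has support in the bounded set $Q$, it is integrable. By Fubini's theorem, $PA(x_1)$ is defined for almost every $x_1\in\mathbb R$, $PA$ is measurable, and $PA\in L^1(\mathbb R)$. If $x_1\notin I$, then $A(x_1,x')=0$ for all $x'$, so $PA(x_1)=0$. Hence $\operatorname{supp} PA\subset I$ (up to a null set, which we may redefine).

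\emph{Step 2 (size).} For $x_1\in I$, the integrand $A(x_1,\cdot)$ vanishes outside $Q'$. Therefore
$$
|PA(x_1)|\le\int_{Q'}|A(x_1,x')|\,dx'\le |Q'|\,\|A\|_{L^\infty(\mathbb R^N)}\le \frac{\ell^{N-1}}{\ell^N}=\frac1{|I|}.
$$

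\emph{Step 3 (cancellation).} By Fubini's theorem,
$$
\int_{\mathbb R}PA(x_1)\,dx_1=\int_{\mathbb R^N}A(x)\,dx=0 .
$$
Steps 1 through 3 together show that $PA$ is an $H^1(\mathbb R)$-atom supported in $I$.

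There is no real obstacle in this argument. The only point needing care is the use of Fubini's theorem: it gives both the almost-everywhere definition of $PA$ and the identity $\int PA=\int A$, and it applies because $A\in L^1(\mathbb R^N)$. The size bound works with constant exactly $1$ because the cube is a product of intervals of equal length. This exact product structure is why the projection of a cube, and not of a general set, is used.
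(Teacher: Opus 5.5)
Your proposal is correct and follows essentially the same route as the paper. The paper also writes $Q=I\times Q'$ and checks the support in $I$, the cancellation via Fubini, and the bound $|PA|\le\|A\|_{L^\infty(\mathbb R^N)}|Q'|\le\ell^{-1}=|I|^{-1}$. Your added remarks are a slight refinement of the paper's terser argument: the a.e.\ definition of $PA$, and the size bound holding for almost every $x_1$ because $\|A\|_{L^\infty}$ is an essential supremum.
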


\begin{proof}
Let $A$ be an $H^1(\mathbb R^N)$-atom supported in a cube $Q=I\times Q'\subset\mathbb R\times\mathbb R^{N-1}$
with side length $\ell>0$.
Then $PA$ is supported in $I$, and
$$
\int_{\mathbb R}PA(x_1)\,dx_1=\int_{\mathbb R^N}A(x_1,x')\,dx_1\,dx'=0.
$$
Moreover, for any $x_1\in\mathbb R$,
$$
|PA(x_1)|\le \|A\|_{L^\infty(\mathbb R^N)}\,|Q'|\le |Q|^{-1}\ell^{N-1}=\ell^{-1}=|I|^{-1}.
$$
These show that
$PA$ is an $H^1(\mathbb R)$-atom and hence finish the proof of Lemma \ref{lem:projection-atom}.
\end{proof}

\begin{theorem}\label{thm:Nd-strict}
Let $N\in\mathbb N\cap[2,\infty)$, and let $\gamma\in[-1,0)$. Then
$$
\dot H^{1,1}(\mathbb R^N)\subsetneqq \dot{W}^{1,1}(\gamma)\quad\text{and}\quad
\dot H^{1,1}(\mathbb R^N)\subsetneqq \dot{BV}(\gamma).
$$
\end{theorem}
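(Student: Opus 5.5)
The inclusions $\dot H^{1,1}(\mathbb R^N)\subset\dot{W}^{1,1}(\gamma)\subset\dot{BV}(\gamma)$ are already given by Theorem~\ref{thm:positive}. So only strictness needs proof, and for that I will build one function $U\in\dot{W}^{1,1}(\gamma)\cap\dot{BV}(\gamma)$ with $U\notin\dot H^{1,1}(\mathbb R^N)$.

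\emph{Construction.} Take $u$ as in \eqref{log-exampleu}. It is bounded (indeed $0\le u\le 1$) and supported in $[0,\frac2e]$. By Lemma~\ref{lem:log-endpoint}, $u\in\dot{W}^{1,1}_1(\gamma)$. Fix a nonnegative $\phi\in C_{\mathrm c}^\infty(\mathbb R^{N-1})$ with $\int_{\mathbb R^{N-1}}\phi=1$, and set $U(x_1,x'):=u(x_1)\phi(x')$. Lemma~\ref{lem:product-W} then gives $U\in\dot{W}^{1,1}(\gamma)\cap\dot{BV}(\gamma)$.

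\emph{Exclusion from $\dot H^{1,1}$.} Suppose, for contradiction, that $U\in\dot H^{1,1}(\mathbb R^N)$. Then $\partial_1U=u'(x_1)\phi(x')=f(x_1)\phi(x')$ lies in $H^1(\mathbb R^N)$, and it is integrable.
\begin{itemize}
\item By Lemma~\ref{lem:H1-models}, write $\partial_1 U=\sum_j c_jA_j$ in $L^1(\mathbb R^N)$, where the $A_j$ are $H^1(\mathbb R^N)$-atoms and $\sum_j|c_j|<\infty$.
\item The operator $P$ of Lemma~\ref{lem:projection-atom} is bounded from $L^1(\mathbb R^N)$ to $L^1(\mathbb R)$, with $\|PF\|_{L^1}\le\|F\|_{L^1}$. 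Hence $P\partial_1U=\sum_jc_jPA_j$ in $L^1(\mathbb R)$.
\item Each $PA_j$ is an $H^1(\mathbb R)$-atom by Lemma~\ref{lem:projection-atom}. Lemma~\ref{lem:H1-models} in dimension one therefore gives $P\partial_1U\in H^1(\mathbb R)$.
\item Since $\int\phi=1$, we have $P\partial_1 U=f$. So $f\in H^1(\mathbb R)$.
\item By Lemma~\ref{cor:H-H1}, this forces $\mathcal Hf\in L^1(\mathbb R)$, which contradicts Lemma~\ref{lem:H-not-L1}.
\end{itemize}
Hence $U\notin\dot H^{1,1}(\mathbb R^N)$, and both inclusions in the statement are strict.

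\emph{Main technical point.} The delicate step is transferring $H^1$ membership through $P$. The atomic series converges in $L^1$, not merely in the $H^1$ norm, so $P$ can be applied term by term. The point to verify is that the sufficiency direction of Lemma~\ref{lem:H1-models} in one dimension accepts exactly such an $L^1$-convergent sum with $\ell^1$ coefficients. It does, because the lemma characterizes $H^1$ membership precisely by an $L^1$-convergent atomic series with $\sum_j|c_j|<\infty$. A second detail is that the cube $Q$ in Lemma~\ref{lem:projection-atom} must factor as $I\times Q'$; this holds automatically since cubes are axis-parallel.
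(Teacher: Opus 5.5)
Your proposal is correct and follows the paper's proof essentially step for step. The steps match: the product $U=u(x_1)\phi(x')$ handled by Lemma~\ref{lem:product-W}, an atomic decomposition of $\partial_1U$ pushed through the projection $P$ of Lemma~\ref{lem:projection-atom} to get $f\in H^1(\mathbb R)$, and the contradiction with Lemma~\ref{lem:H-not-L1}. Your normalization $\phi\ge0$, $\int\phi=1$ is a special case of the paper's condition $\int\phi\ne0$ and changes nothing.
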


\begin{proof}
Fix $\gamma\in[-1,0)$. By Theorem~\ref{thm:positive},
we have $\dot H^{1,1}(\mathbb R^N)\subset\dot{W}^{1,1}(\gamma)\cap\dot{BV}(\gamma)$.
Let $u$ be the same as in \eqref{log-exampleu},
and choose $\phi\in C_{\mathrm c}^\infty(\mathbb R^{N-1})$ satisfying $\int_{\mathbb R^{N-1}}\phi(x')\,dx'\ne0$.
Define $U(x_1,x'):=u(x_1)\phi(x')$. From
Lemma~\ref{lem:product-W}, we infer that $U\in\dot{W}^{1,1}(\gamma)$.

Assume that $U\in\dot H^{1,1}(\mathbb R^N)$.
Using Definition~\ref{def:H11}, we conclude that $F:=\partial_1U=u'(x_1)\phi(x')\in H^1(\mathbb R^N)$.
From Lemma \ref{lem:H1-models}, we deduce that there exist
a sequence $\{A_j\}_{j\in\mathbb N}$ of $H^1(\mathbb R^N)$-atoms and a sequence
$\{c_j\}_{j\in\mathbb N}\subset\mathbb C$ such that $F=\sum_{j=1}^\infty c_jA_j$ in $L^1(\mathbb R^N)$
and $\sum_{j=1}^\infty|c_j|<\infty$.
The obvious fact that $P$ is bounded from $L^1(\mathbb R^N)$ to $L^1(\mathbb R)$
implies that $PF=\sum_{j=1}^\infty c_jPA_j$ in $L^1(\mathbb R)$.
By Lemma~\ref{lem:projection-atom}, $PA_j$ is an $H^1(\mathbb R)$-atom,
which, together with Lemma \ref{lem:H1-models},
further implies $PF\in H^1(\mathbb R)$. On the other hand,
$$
PF(x_1)=\left[\int_{\mathbb R^{N-1}}\phi(x')\,dx'\right]u'(x_1).
$$
This, together with the fact that $\int_{\mathbb R^{N-1}}\phi(x')\,dx'\ne0$,
further implies that $u'\in H^1(\mathbb R)$, and hence $u\in\dot{H}^{1,1}(\mathbb R)$.
This contradicts Lemma \ref{lem:H-not-L1}, which states that $u\notin \dot{H}^{1,1}(\mathbb R)$.
From this, we deduce that $U\notin\dot H^{1,1}(\mathbb R^N)$.
This, together with $\dot H^{1,1}(\mathbb R^N)\subset\dot{W}^{1,1}(\gamma)\cap\dot{BV}(\gamma)$,
further implies that $\dot H^{1,1}(\mathbb R^N)\subsetneqq\dot{W}^{1,1}(\gamma)\cap\dot{BV}(\gamma)$,
which hence completes the proof of Theorem~\ref{thm:Nd-strict}.
\end{proof}

\begin{proof}[Proof of Theorem~\ref{thm:main-embedding}]
Theorem \ref{thm:main-embedding} follows directly from Theorems
\ref{thm:1d-strict} and \ref{thm:Nd-strict}.
\end{proof}

\section{Non-normability}\label{sec:non-normability}

In this section, we show that neither $\dot{W}^{1,1}(\gamma)/\mathbb R$ nor $\dot{BV}(\gamma)/\mathbb R$ is normable,
that is, we prove Theorem \ref{thm:main-non-normability}.
We first give a well-known equivalent characterization of the normability; see, for instance,
\cite[p.\,141, (1.4)]{k84}.
\begin{lemma}\label{lem:normability-criterion}
Let $q$ be a quasi-norm on a linear space $X$.
Then $q$ is normable if and only if there exists a positive
constant $C$ such that, for every $M\in\mathbb N$ and every
$x_1,\ldots,x_M\in X$,
\begin{align}\label{2133}
q\left(\sum_{j=1}^M x_j\right)
\le
C\sum_{j=1}^M q(x_j).
\end{align}
\end{lemma}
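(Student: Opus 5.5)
The plan is to prove the two implications separately. Throughout, $q$ is a quasi-norm with quasi-triangle constant $K\ge1$, so $q(x+y)\le K[q(x)+q(y)]$ for all $x,y\in X$.

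\emph{Necessity.} Suppose $q$ is normable. Then there is a norm $\|\cdot\|$ on $X$ and constants $0<c_1\le c_2$ with $c_1\|x\|\le q(x)\le c_2\|x\|$ for every $x\in X$. Applying the triangle inequality for $\|\cdot\|$ gives
$$q\left(\sum_{j=1}^M x_j\right)\le c_2\left\|\sum_{j=1}^M x_j\right\|\le c_2\sum_{j=1}^M\|x_j\|\le \frac{c_2}{c_1}\sum_{j=1}^M q(x_j).$$
Hence \eqref{2133} holds with $C:=c_2/c_1$, and this constant does not depend on $M$.

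\emph{Sufficiency.} Assume \eqref{2133}. The idea is to take the Minkowski-type hull
$$\|x\|:=\inf\left\{\sum_{j=1}^M q(x_j):\ M\in\mathbb N,\ x_1,\ldots,x_M\in X,\ \sum_{j=1}^M x_j=x\right\}.$$
I will check the following properties in order.
\begin{itemize}
\item Choosing the trivial decomposition $M=1$, $x_1=x$ gives $\|x\|\le q(x)$.
\item Subadditivity $\|x+y\|\le\|x\|+\|y\|$ follows by concatenating near-optimal decompositions of $x$ and $y$.
\item Absolute homogeneity $\|tx\|=|t|\|x\|$ follows by scaling every piece of a decomposition and using the homogeneity of $q$.
\item By \eqref{2133}, every decomposition of $x$ satisfies $\sum_j q(x_j)\ge C^{-1}q(x)$. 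Taking the infimum gives $\|x\|\ge C^{-1}q(x)$.
\end{itemize}
The last item also shows that $\|x\|=0$ forces $q(x)=0$, and hence $x=0$. So $\|\cdot\|$ is a genuine norm satisfying $C^{-1}q\le\|\cdot\|\le q$, and $q$ is therefore normable.

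No step here is a real obstacle. The one point to handle carefully is that the lower bound $\|\cdot\|\ge C^{-1}q$ uses \eqref{2133} with a constant $C$ that is uniform in the number of summands $M$. This uniformity is exactly what the later non-normability arguments will contradict, by producing finite sums (Cantor-type cutoffs, or weak $\ell^{1,\infty}_m$ blocks) for which the ratio in \eqref{2133} is unbounded as $M\to\infty$.
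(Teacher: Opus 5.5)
Your proof is correct and complete. Necessity is the one-line comparison with the equivalent norm. For sufficiency, the gauge $\|x\|=\inf\{\sum_j q(x_j):\sum_j x_j=x\}$ is a norm with $C^{-1}q\le\|\cdot\|\le q$; it is the Minkowski functional of the convex hull of the $q$-unit ball, and the lower bound is exactly \eqref{2133} applied to each decomposition.

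The paper gives no proof of this lemma and only quotes it as a well-known criterion from the literature, so your argument is the standard one behind that citation. In passing, the quasi-triangle constant $K$ is never used: the equivalence holds for any absolutely homogeneous functional that vanishes only at $0$.
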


The following lemma follows from a change of variables.

\begin{lemma}\label{lem:affine}
Let $\gamma\in[-1,0)$ and $f\in \dot{W}^{1,1}_1(\gamma)$.
Then, for $a\in\mathbb R$ and $r\in(0,\infty)$,
$$\left\|\left[f\left(\frac{\cdot-a}{r}\right)\right]'\right\|_{L^1(\mathbb R)}
=\left\|f'\right\|_{L^1(\mathbb R)}
\quad \text{and}\quad
\left[f\left(\frac{\cdot-a}{r}\right)\right]_{\dot W_{1,\gamma}}=\left[f\right]_{\dot W_{1,\gamma}}.$$
\end{lemma}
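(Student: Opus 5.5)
Set $g(x):=f(\frac{x-a}{r})$. Then $g'(x)=\frac1r f'(\frac{x-a}{r})$ for almost every $x$. We treat the two identities separately, each by a single affine change of variables.

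\emph{The $L^1$ identity.} Substitute $t=\frac{x-a}{r}$, so $dx=r\,dt$. This gives
$$\int_{\mathbb R}|g'(x)|\,dx=\int_{\mathbb R}\frac1r\left|f'\left(t\right)\right|r\,dt=\|f'\|_{L^1(\mathbb R)}.$$
This is exactly the first identity.

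\emph{The quasi-seminorm identity.} Use the map $T(s,t):=(a+rs,a+rt)$ on $\mathbb R\times\mathbb R$; its Jacobian is $r^2$. For $x=a+rs$ and $y=a+rt$ we have $|x-y|=r|s-t|$ and $g(x)-g(y)=f(s)-f(t)$. Hence
$$Q_\gamma g(x,y)=r^{-1-\gamma}Q_\gamma f(s,t),$$
and consequently
$$E^{(1)}_{\lambda,\gamma}[g]=T\left(E^{(1)}_{\lambda r^{1+\gamma},\gamma}[f]\right).$$
Since $N=1$, the weight is $|x-y|^{\gamma-1}=r^{\gamma-1}|s-t|^{\gamma-1}$. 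Changing variables therefore gives
$$\nu_{1,\gamma}\left(E_{\lambda,\gamma}[g]\right)=r^{2}r^{\gamma-1}\nu_{1,\gamma}\left(E_{\lambda r^{1+\gamma},\gamma}[f]\right)=r^{1+\gamma}\nu_{1,\gamma}\left(E_{\lambda r^{1+\gamma},\gamma}[f]\right).$$

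Multiply by $\lambda$ and set $\mu:=\lambda r^{1+\gamma}$, so that
$$\lambda\,\nu_{1,\gamma}(E_{\lambda,\gamma}[g])=\mu\,\nu_{1,\gamma}(E_{\mu,\gamma}[f]).$$
We claim that as $\lambda$ ranges over $(0,\infty)$, $\mu$ ranges over all of $(0,\infty)$. If $\gamma\in(-1,0)$, then $r^{1+\gamma}>0$ is a fixed positive factor, so $\lambda\mapsto\mu$ is a bijection of $(0,\infty)$. If $\gamma=-1$, then $\mu=\lambda$ directly. Taking suprema over $\lambda$ therefore yields $[g]_{\dot W_{1,\gamma}}=[f]_{\dot W_{1,\gamma}}$.

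The only step needing care is the exponent bookkeeping: the Jacobian $r^2$, the kernel factor $r^{\gamma-1}$, and the threshold rescaling $r^{1+\gamma}$ must combine into the invariance of $\lambda\,\nu$. They do, because $r^2r^{\gamma-1}=r^{1+\gamma}$ exactly cancels the $r^{-(1+\gamma)}$ arising from the substitution $\lambda=\mu r^{-(1+\gamma)}$. The same computation also shows that $g\in\dot W^{1,1}_1(\gamma)$ whenever $f$ is.
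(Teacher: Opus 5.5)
Your proposal is correct and follows essentially the same approach as the paper: the affine change of variables $x=a+rs$, $y=a+rt$, the scaling $Q_\gamma g=r^{-(1+\gamma)}Q_\gamma f$, the factor $r^{1+\gamma}$ coming from the Jacobian and the kernel, and the substitution $\mu=r^{1+\gamma}\lambda$ before taking suprema. Your separate treatment of $\gamma=-1$ is not needed, since $r^{1+\gamma}>0$ for every $\gamma$, but it does no harm.
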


\begin{proof}
Let $f_{a,r}(x):=f(\frac{x-a}{r})$ for any $x\in\mathbb R$.
The first equality follows directly from the change of variables $x=a+rx'$.

Let $x=a+rx'$ and $y=a+ry'$. Then
$$
Q_\gamma^{(1)}f_{a,r}(x,y)
=
r^{-(1+\gamma)}Q_\gamma^{(1)}f(x',y')
$$
and
$$
|x-y|^{\gamma-1}\,dx\,dy
=
r^{1+\gamma}|x'-y'|^{\gamma-1}\,dx'\,dy'.
$$
Thus, for every $\lambda\in (0,\infty)$ and $\mu=r^{1+\gamma}\lambda$,
$$
\lambda\nu_{1,\gamma}(E_{\lambda,\gamma}^{(1)}[f_{a,r}])
=
\mu\nu_{1,\gamma}(E_{\mu,\gamma}^{(1)}[f]).
$$
Taking the supremum over $\lambda,\mu\in(0,\infty)$,
we obtain
$[f_{a,r}]_{\dot W_{1,\gamma}}=[f]_{\dot W_{1,\gamma}}$.
This finishes the proof of Lemma \ref{lem:affine}.
\end{proof}

We first consider the case $\gamma\in(-1,0)$.

\begin{proposition}\label{prop:one-dimensional-finite}
Let $\gamma\in(-1,0)$.
Assume that $g\in W^{1,\infty}(\mathbb R)\cap L^\infty(\mathbb R)$
such that there exist
$\alpha<\beta$ and $c_-,c_+\in\mathbb R$ such that
$g(x):=c_-$ for any $x\le \alpha$ and
$g(x):=c_+$ for any $x\ge\beta$.
Then $g\in\dot{W}^{1,1}_1(\gamma)$.
\end{proposition}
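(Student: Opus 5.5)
The plan is to estimate $[g]_{\dot W_{1,\gamma}}$ directly, with the same level-set bookkeeping as in the proof of Lemma~\ref{lem:log-endpoint}. First, $g'\in L^\infty(\mathbb R)$ and $g'$ vanishes outside $[\alpha,\beta]$, so $g'\in L^1(\mathbb R)$ and $g\in\dot W^{1,1}(\mathbb R)$. It therefore suffices to prove $\sup_\lambda\lambda\,\nu_{1,\gamma}(E_{\lambda,\gamma}[g])<\infty$. By symmetry,
$$
\nu_{1,\gamma}(E_{\lambda,\gamma}[g])=2\int_0^\infty m_\lambda(h)h^{\gamma-1}\,dh,
\qquad
m_\lambda(h):=\left|\left\{x\in\mathbb R:|g(x+h)-g(x)|>\lambda h^{1+\gamma}\right\}\right|.
$$
Set $L:=\|g'\|_{L^\infty}$ and $M:=2\|g\|_{L^\infty}$, and let $\ell:=\beta-\alpha$.

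Two pointwise facts drive the estimate. The first is support: if $g(x+h)\ne g(x)$ with $h>0$, then $[x,x+h]$ meets $(\alpha,\beta)$, so $x\in(\alpha-h,\beta)$ and $m_\lambda(h)\le \ell+h$. The second is a pair of size bounds, $|g(x+h)-g(x)|\le\min\{Lh,M\}$. Since $\gamma\in(-1,0)$, the condition $Lh>\lambda h^{1+\gamma}$ forces $h>h_0:=(\lambda/L)^{1/(-\gamma)}$. The condition $M>\lambda h^{1+\gamma}$ forces $h<h_1:=(M/\lambda)^{1/(1+\gamma)}$. Hence $m_\lambda(h)=0$ unless $h_0<h<h_1$.

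On that range we use $m_\lambda(h)\le\ell+h$ and split into the two pieces. The first piece is
$$
\lambda\ell\int_{h_0}^\infty h^{\gamma-1}\,dh=\frac{\ell}{-\gamma}\,\lambda h_0^{\gamma}=\frac{\ell L}{-\gamma},
$$
and the exponents cancel exactly because $h_0^{-\gamma}=\lambda/L$. The second piece is
$$
\lambda\int_0^{h_1}h^{\gamma}\,dh=\frac{\lambda h_1^{1+\gamma}}{1+\gamma}=\frac{M}{1+\gamma},
$$
which is finite precisely because $\gamma>-1$. Both bounds are independent of $\lambda$, so $[g]_{\dot W_{1,\gamma}}\lesssim \ell L+M<\infty$.

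There is no serious obstacle here; the argument is essentially a rescaled version of the $I_3$ and large-$h$ estimates in Lemma~\ref{lem:log-endpoint}. The one step needing care is the support estimate $m_\lambda(h)\le \ell+h$ and the correct pairing of cutoffs: the lower cutoff $h_0$ must go with the $\ell$-term and the upper cutoff $h_1$ with the $h$-term, so that each integral converges. This pairing is exactly where $\gamma\in(-1,0)$ is used. At $\gamma=-1$ the $h_1$ cutoff disappears, and indeed that endpoint is excluded from this proposition. It is also worth recording the explicit bound $[g]_{\dot W_{1,\gamma}}\lesssim (\beta-\alpha)\|g'\|_{L^\infty}+\|g\|_{L^\infty}$, because the non-normability argument will need uniform control of such cutoff functions, presumably combined with Lemma~\ref{lem:affine}.
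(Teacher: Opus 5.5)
Your proposal is correct and is essentially the paper's own proof: the same symmetric reduction to $2\int_0^\infty m_\lambda(h)h^{\gamma-1}\,dh$, the same support bound $m_\lambda(h)\le \ell+h$, the same cutoffs $(\lambda/L)^{-1/\gamma}<h<(2\|g\|_{L^\infty}/\lambda)^{1/(1+\gamma)}$, and the same resulting bound $2[\ell L/|\gamma|+2\|g\|_{L^\infty}/(1+\gamma)]$. The only thing you skip is the degenerate case $L=0$ (where $h_0$ is undefined, but $g$ is constant), which the paper disposes of separately in one line.
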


\begin{proof}
Let $L:=\|g'\|_{L^\infty(\mathbb R)}$, $M:=\|g\|_{L^\infty(\mathbb R)}$,
and $\ell:=\beta-\alpha$. The conclusion is immediate if $L=0$
or $M=0$, so we assume that $L,M\in(0,\infty)$.
By symmetry and the change of variables $y=x+h$, we find that
$$
\nu_{1,\gamma}(E^{(1)}_{\lambda,\gamma}[g])
=
2\int_0^\infty h^{\gamma-1}
|\{x:\ |g(x+h)-g(x)|>\lambda h^{1+\gamma}\}|\,dh.
$$
From the definition of $g$, it follows that,
for any $h\in(0,\infty)$,
$|\{x:\ g(x+h)\ne g(x)\}|\le \ell+h$ and
$|g(x+h)-g(x)|\le \min\{Lh,2M\}.$
By this, the integrand can be nonzero only when
$a_\lambda:=(\frac{\lambda}{L})^{-\frac1\gamma}
\le h\le
(\frac{2M}{\lambda})^{\frac1{1+\gamma}}
=:b_\lambda$,
which further implies that
$$
\lambda\nu_{1,\gamma}(E^{(1)}_{\lambda,\gamma}[g])
\le
2\lambda\int_0^\infty
\mathbf1_{\{a_\lambda<h<b_\lambda\}}(\ell+h)h^{\gamma-1}\,dh\le
2\left(
\frac{\ell L}{|\gamma|}
+
\frac{2M}{1+\gamma}
\right)<\infty.
$$
This finishes the proof of Proposition~\ref{prop:one-dimensional-finite}.
\end{proof}

The following non-normability result in the one-dimensional case for
$\gamma\in(-1,0)$ is essentially derived from
\cite[Lemma~6.2]{BSSVY}. In particular, the construction of the sequence
$\{g_m\}_{m\in\mathbb N}$ below is taken from its proof.

\begin{theorem}\label{thm:open-interval-one-dim}
Let $\gamma\in(-1,0)$. Then neither $\dot{W}^{1,1}_1(\gamma)/\mathbb R$ nor $\dot{BV}_1(\gamma)/\mathbb R$ is normable.
\end{theorem}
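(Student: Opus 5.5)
We argue by contradiction using the criterion of Lemma~\ref{lem:normability-criterion}. Since $\dot{W}^{1,1}_1(\gamma)\subset\dot{BV}_1(\gamma)$ with the same quasi-seminorm on $W^{1,1}$-functions, it suffices to produce, for every $C>0$, finitely many functions $g_{m,1},\dots,g_{m,M_m}\in\dot{W}^{1,1}_1(\gamma)$ violating \eqref{2133}. Such a family shows non-normability of both quotient spaces simultaneously: a norm equivalent to the $\dot{BV}_1(\gamma)/\mathbb R$ quasi-norm would restrict to one on $\dot{W}^{1,1}_1(\gamma)/\mathbb R$.

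\textbf{Building blocks.} Fix a Lipschitz cutoff $\psi$ with $\psi=0$ on $(-\infty,0]$, $\psi=1$ on $[1,\infty)$, and linear in between. By Proposition~\ref{prop:one-dimensional-finite} we have $\psi\in\dot{W}^{1,1}_1(\gamma)$. By Lemma~\ref{lem:affine}, every rescaled translate $\psi(\frac{\cdot-a}{r})$ has
\[
\|\nabla\cdot\|_{L^1}=1 \quad\text{and}\quad [\,\cdot\,]_{\dot W_{1,\gamma}}=[\psi]_{\dot W_{1,\gamma}},
\]
both independent of $a$ and $r$. The same holds for height-$\delta$ versions $\delta\psi(\frac{\cdot-a}{r})$, whose quasi-seminorm scales linearly in $\delta$.

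\textbf{Cantor construction.} Following the proof of \cite[Lemma~6.2]{BSSVY}, at generation $m$ write the Cantor-type staircase $g_m$ as a sum of elementary steps:
\[
g_m=\sum_{k=1}^{m}\ \sum_{j=1}^{2^{k-1}} 2^{-k}\,\psi\Bigl(\frac{\cdot-a_{k,j}}{r_k}\Bigr),
\]
where the $a_{k,j}$ are the locations of the removed intervals at level $k$, with lengths $r_k\approx 3^{-k}$. Other scale ratios may be used, as in BSSVY. Each summand has quasi-norm $\lesssim 2^{-k}$, so the right-hand side of \eqref{2133} is bounded by
\[
\sum_{k=1}^{m} 2^{k-1}\cdot 2^{-k}\bigl(1+[\psi]_{\dot W_{1,\gamma}}\bigr)\sim m .
\]

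\textbf{Lower bound.} The main work is to show
\[
\|g_m\|_{\dot{W}^{1,1}_1(\gamma)}\ge [g_m]_{\dot W_{1,\gamma}}\gtrsim \Theta(m)\quad\text{with } \frac{\Theta(m)}{m}\to\infty .
\]
Here I would reuse the lower bound computed in \cite[Lemma~6.2]{BSSVY}, which shows that the Cantor function has infinite BSVY quasi-seminorm for $\gamma\in(-1,0)$.

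Concretely, fix $\lambda$ adapted to scale $h\approx 3^{-k}$ and choose the construction parameters (the ratio of step heights to interval lengths) so that at each of the roughly $m$ scales the increments $|g_m(x+h)-g_m(x)|\approx 2^{-k}$ exceed $\lambda h^{1+\gamma}$ on a set of $x$ of measure $\approx 2^k\cdot 3^{-k}$. These contributions should add up, through the weight $h^{\gamma-1}$, over many scales at one common $\lambda$. If the original BSSVY parameters only give logarithmic-in-$m$ growth of a single-$\lambda$ quantity, I would instead tune the interval lengths $r_k$ so that $\lambda h^{1+\gamma}$ matches $2^{-k}$ at a fixed $\lambda$ simultaneously for all $k$. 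This is where $\gamma<0$ makes $\sum_k$ of the level-set measures grow.

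I expect this last step, getting superlinear growth in the number of summands at a single $\lambda$, to be the main obstacle. The idea is that each block $\psi$ contributes only a bounded amount at each $\lambda$ individually, but the blocks are tuned to the same $\lambda$, so their contributions add. Once this lower bound holds, \eqref{2133} fails as $m\to\infty$, and Lemma~\ref{lem:normability-criterion} finishes the proof.
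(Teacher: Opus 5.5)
There is a genuine gap, and it starts with your decomposition. The displayed double sum is non-decreasing with total rise $\sum_{k=1}^m 2^{k-1}2^{-k}=m/2$. So it is not a Cantor staircase, which rises by $1$ and is constant on the removed intervals, and therefore cannot be built from positive ramps sitting on them. This miscount is what makes the right-hand side of \eqref{2133} of order $m$, and it forces you to require $[g_m]_{\dot W_{1,\gamma}}/m\to\infty$.

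You do not prove that superlinear bound, and it fails for the staircase you have in mind. First, \cite[(6-6)]{BSSVY} gives only $\nu_{1,\gamma}(E^{(1)}_{1/4,\gamma}[g_m])\gtrsim m$. Second, for the tuned staircase with ratio $\rho=2^{-1/(1+\gamma)}$, a scale-by-scale count gives $[g_m]_{\dot W_{1,\gamma}}\lesssim m$. For $h\in[\rho^{k+1},\rho^k)$ the increments are at most $2^{1-k}\le 4h^{1+\gamma}$, so only $\lambda<4$ matters. They live on a set of measure $\lesssim(2\rho)^k$, so each of the $m$ scales contributes $O(1)$, and the ranges $h<\rho^m$ and $h\ge1$ contribute $O(1)$.

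Your heuristic that tuned blocks ``add up'' at a common $\lambda$ cannot help by itself either. If the level set of $\sum_j f_j$ were contained in the union of the level sets of the $f_j$, you would get $\lambda\,\nu_{1,\gamma}(E^{(1)}_{\lambda,\gamma}[\sum_j f_j])\le\sum_j[f_j]_{\dot W_{1,\gamma}}$, which is no better than \eqref{2133}. The gain comes from cross-scale pairs $(x,y)$ with $|x-y|\approx\rho^k$, $k<m$. There, about $2^{m-k}$ tiny steps accumulate an increment $\approx 2^{-k}\approx|x-y|^{1+\gamma}$, although no single summand exceeds the threshold.

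The missing idea is to decompose $g_m$ only at its finest scale. Take $\rho=2^{-1/(1+\gamma)}$; the middle-thirds ratio is not tuned for general $\gamma$. With the self-similar recursion, \eqref{2145} gives $g_m=2^{-m}\sum_{j=1}^{2^m}g_0\bigl(\frac{\cdot-a_{m,j}}{\rho^m}\bigr)$. By homogeneity and Lemma~\ref{lem:affine}, each summand has quasi-seminorm $2^{-m}\|g_0\|_{\dot{W}^{1,1}_1(\gamma)}$. Hence the right-hand side of \eqref{2133} is $C\|g_0\|_{\dot{W}^{1,1}_1(\gamma)}$, independent of $m$. The merely linear bound $[g_m]_{\dot W_{1,\gamma}}\ge\frac14\nu_{1,\gamma}(E^{(1)}_{1/4,\gamma}[g_m])\ge m/C_2$ from \cite[(6-6)]{BSSVY} then already gives the contradiction; no superlinear growth is needed. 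Your reduction from $\dot{BV}_1(\gamma)$ to $\dot{W}^{1,1}_1(\gamma)$ is fine and is the one the paper uses.
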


\begin{proof}
Let $\rho:=2^{-\frac1{1+\gamma}}\in(0,\frac12)$.
Let $g_0\in C^\infty(\mathbb R)$ satisfy $0\le g_0\le1$, $g_0(x)=0$ for $x\le\rho$, and $g_0(x)=1$ for $x\ge1-\rho$.
For any $m\in\mathbb N$ and $x\in\mathbb R$, we define
$$
g_{m+1}(x)
:=
\frac12 g_m\left(\frac{x}{\rho}\right)
+
\frac12 g_m\left(\frac{x-[1-\rho]}{\rho}\right).
$$
Using this iteratively, we conclude that,
for any $m\in\mathbb Z_+$,
there exist $a_{m,1},\ldots,a_{m,2^m}\in\mathbb R$ such that,
for any $x\in\mathbb R$,
\begin{align}\label{2145}
g_m(x)
=
\frac1{2^{m}}\sum_{j=1}^{2^m}
g_0\left(\frac{x-a_{m,j}}{\rho^m}\right).
\end{align}
From Proposition~\ref{prop:one-dimensional-finite}, we infer that
$\|g_0\|_{\dot{W}^{1,1}_1(\gamma)}<\infty$. Moreover, by Lemma~\ref{lem:affine},
we find that
$$
\left\|
g_0\left(\frac{\cdot-a_{m,j}}{\rho^m}\right)
\right\|_{\dot{W}^{1,1}_1(\gamma)}
=
\|g_0\|_{\dot{W}^{1,1}_1(\gamma)}<\infty.
$$
Assume that $\dot{W}^{1,1}_1(\gamma)/\mathbb R$ is normable.
Then, from Lemma \ref{lem:normability-criterion}, it follows that
there exists a positive constant $C_1$, independent of $m$,
such that
$$
\|g_m\|_{\dot{W}^{1,1}_1(\gamma)}
\le
C_1\|g_0\|_{\dot{W}^{1,1}_1(\gamma)}<\infty.
$$
On the other hand, by \cite[(6-6)]{BSSVY}, we find that there exists
a positive constant $C_2$, depending on $\gamma$, such that
$$
\|g_m\|_{\dot{W}^{1,1}_1(\gamma)}
\ge
[g_m]_{\dot W_{1,\gamma}}\ge
\frac14\nu_{1,\gamma}\left(E^{(1)}_{\frac14,\gamma}[g_m]\right)
\ge
\frac{m}{C_2}
\to\infty
$$
as $m\to\infty$, which is a contradiction. This shows that
$\dot{W}^{1,1}_1(\gamma)/\mathbb R$ is not normable.
Moreover, if $\dot{BV}_1(\gamma)/\mathbb R$ were normable, then the corresponding
norm restricted to the subspace $\dot{W}^{1,1}_1(\gamma)/\mathbb R$ would be
equivalent to $\|\cdot\|_{\dot{W}^{1,1}_1(\gamma)}$ because, for any
$u\in\dot{W}^{1,1}_1(\gamma)$,
$\|u\|_{\dot{BV}_1(\gamma)}
=
\|u\|_{\dot{W}^{1,1}_1(\gamma)}.$
This contradicts the non-normability of
$\dot{W}^{1,1}_1(\gamma)/\mathbb R$ and hence finishes the proof
of Theorem~\ref{thm:open-interval-one-dim}.
\end{proof}

We lift the non-normability from the one-dimensional case to the
$N$-dimensional case.
To this end, we need the following lemma.

\begin{proposition}\label{prop:lifting}
Let $N\in\mathbb N\cap[2,\infty)$ and $\gamma\in(-1,0)$.
Let $\eta_1\in C_{\mathrm c}^\infty(\mathbb R)$ be supported in $(-1,2)$,
satisfy $0\le\eta_1\le1$, and $\eta_1(x)=1$ for any
$x\in[-\frac12,\frac32]$.
Let $\eta(x):=\prod_{j=1}^N\eta_1(x_j)$ for $x=(x_1,\ldots,x_N)\in\mathbb R^N$, and define
$$
(Tg)(x):=g(x_1)\eta(x).
$$
Then there exists a positive constant $C$ such that,
for any bounded $g\in\dot{W}^{1,1}_1(\gamma)$,
\begin{align}\label{2131}
\|Tg\|_{\dot{W}^{1,1}(\gamma)}
\le
C\left[
\|g\|_{L^\infty(\mathbb R)}
+
\|g'\|_{L^1(\mathbb R)}
+
[g]_{\dot W_{1,\gamma}}
\right].
\end{align}
\end{proposition}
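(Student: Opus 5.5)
We follow the scheme of Lemma~\ref{lem:product-W}, now keeping track of constants. Write $Tg(x)=g(x_1)\eta_1(x_1)\psi(x')$ with $\psi(x'):=\prod_{j=2}^N\eta_1(x_j)\in C_{\mathrm c}^\infty(\mathbb R^{N-1})$, supported in $(-1,2)^{N-1}$.

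\emph{Gradient term.} We have $\nabla(Tg)=(g'\eta_1\psi+g\eta_1'\psi,\ g\eta_1\nabla_{x'}\psi)$. Hence
$$\|\nabla Tg\|_{L^1}\lesssim\|g'\|_{L^1}+\|g\|_{L^\infty},$$
since $\eta_1'$, $\eta_1$, $\psi$ and $\nabla\psi$ are bounded with supports of bounded measure.

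\emph{Quasi-seminorm.} For $x=(x_1,x')$ and $h=(h_1,h')$ we decompose
\begin{align*}
Tg(x+h)-Tg(x)&=\eta(x+h)\,[g(x_1+h_1)-g(x_1)]+g(x_1)\,[\eta(x+h)-\eta(x)]\\
&=:A(x,h)+B(x,h).
\end{align*}
As in Lemma~\ref{lem:product-W}, $E_{\lambda,\gamma}[Tg]$ is contained in $\{|A|>\frac\lambda2|h|^{1+\gamma}\}\cup\{|B|>\frac\lambda2|h|^{1+\gamma}\}$.

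\emph{The term $A$.} If $|A|>\frac\lambda2|h|^{1+\gamma}$, then $x+h\in(-1,2)^N$ and, since $|h|\ge|h_1|$ and $1+\gamma>0$,
$$|g(x_1+h_1)-g(x_1)|>\tfrac\lambda2|h_1|^{1+\gamma}.$$
We integrate first in $h'$, using $\int_{\mathbb R^{N-1}}|(h_1,h')|^{\gamma-N}\,dh'=C|h_1|^{\gamma-1}$, and then in $x'$ with $x'+h'\in(-1,2)^{N-1}$, which contributes a factor $3^{N-1}$. This gives
$$\lambda\,\nu_\gamma\bigl(\{|A|>\tfrac\lambda2|h|^{1+\gamma}\}\bigr)\lesssim\lambda\,\nu_{1,\gamma}\bigl(E^{(1)}_{\lambda/2,\gamma}[g]\bigr)\le2[g]_{\dot W_{1,\gamma}}.$$
There is one subtlety: the $x'$-integral must be done after fixing $h'$, and the bound on $|h|^{\gamma-N}$ through $h'$ uses the exact identity above. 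This is the same computation as \eqref{2304-1}, now with the constant made explicit.

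\emph{The term $B$.} This is the step needing care, because the bound must be linear in $\|g\|_{L^\infty}$. We have
$$|B|\le\|g\|_{L^\infty}\min\{\|\nabla\eta\|_{L^\infty}|h|,\,2\}.$$
Moreover, $B\ne0$ forces $x$ or $x+h$ to lie in $(-1,2)^N$, so for fixed $h$ the $x$-set has measure at most $2\cdot3^N$. Set $M:=\|g\|_{L^\infty}$. Then the set is nonempty only when $\lambda|h|^{1+\gamma}<4M$ and $\lambda|h|^{\gamma}<2\|\nabla\eta\|_{L^\infty}M$, that is,
$$a:=\left(\frac{\lambda}{2\|\nabla\eta\|_{L^\infty}M}\right)^{-1/\gamma}\le|h|\le\left(\frac{4M}{\lambda}\right)^{1/(1+\gamma)}.$$
By polar coordinates,
$$\lambda\,\nu_\gamma\bigl(\{|B|>\tfrac\lambda2|h|^{1+\gamma}\}\bigr)\lesssim\lambda\int_{|h|\ge a}|h|^{\gamma-N}\,dh\sim\frac{\lambda a^{\gamma}}{|\gamma|}=\frac{2\|\nabla\eta\|_{L^\infty}M}{|\gamma|}.$$
This bound is linear in $M$, as required; the upper cutoff is not even needed.

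Summing the gradient bound and the two level-set bounds, and taking the supremum over $\lambda$, yields \eqref{2131} with $C$ depending only on $N$, $\gamma$ and $\eta_1$.
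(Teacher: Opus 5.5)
Your proposal is correct and follows essentially the same route as the paper. You use the same product-rule splitting: the term carrying $g(x_1+h_1)-g(x_1)$ is reduced to $\nu_{1,\gamma}(E^{(1)}_{\lambda/2,\gamma}[g])$ via $|h|\ge|h_1|$, $1+\gamma>0$ and integration in the transverse variables, and the term carrying $\eta(x+h)-\eta(x)$ is bounded linearly in $\|g\|_{L^\infty}$ by the Lipschitz bound and polar coordinates. The only differences are cosmetic: you split at $\lambda/2$ instead of invoking the quasi-triangle inequality, and your remark about the order of the $x'$- and $h'$-integrations is moot by Tonelli's theorem.
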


\begin{proof}
The definition of $Tg$ gives
$$
\|\nabla(Tg)\|_{L^1(\mathbb R^N)}
\lesssim
\|g'\|_{L^1(\mathbb R)}
+
\|g\|_{L^\infty(\mathbb R)}.
$$

To estimate $[Tg]_{\dot W_\gamma}$, for any $x,y\in\mathbb R^N$, we write
\begin{align}\label{1029-1}
Q_\gamma(Tg)(x,y)
=
g(y_1)\frac{\eta(x)-\eta(y)}{|x-y|^{1+\gamma}}
+
\eta(x)\frac{g(x_1)-g(y_1)}{|x-y|^{1+\gamma}}.
\end{align}
By the quasi-triangle inequality of the $L^{1,\infty}$ quasi-norm,
it suffices to estimate the two terms in \eqref{1029-1} separately.
Let $K:=\operatorname{supp}\eta$.
Using the fact that $\eta$ is Lipschitz and compactly supported,
we conclude that, for any $x,y\in\mathbb R^N$
$$
\left|
g(y_1)\frac{\eta(x)-\eta(y)}{|x-y|^{1+\gamma}}
\right|
\lesssim
\|g\|_{L^\infty(\mathbb R)}
|x-y|^{-\gamma}
[\mathbf 1_K(x)+\mathbf 1_K(y)],
$$
which, combined with polar coordinates,
further implies that
\begin{align}\label{1029-2}
&\sup_{\lambda\in (0,\infty)}
\lambda\nu_\gamma
\left(
\left\{x,y\in\mathbb R^N:\
\left|
g(y_1)\frac{\eta(x)-\eta(y)}{|x-y|^{1+\gamma}}
\right|>\lambda
\right\}
\right)\nonumber\\
&\quad\lesssim
\sup_{\lambda\in (0,\infty)}
\lambda|K|
\int_{(\frac{\lambda}{\|g\|_{L^\infty(\mathbb R)}})^{-\frac1\gamma}}^\infty
r^{\gamma-1}\,dr\lesssim
\|g\|_{L^\infty(\mathbb R)}.
\end{align}
For the second term, since
$|x-y|\ge|x_1-y_1|$ and $0\le\eta\le1$, we deduce that, if
$$
\eta(x)\frac{|g(x_1)-g(y_1)|}{|x-y|^{1+\gamma}}
>
\lambda,
$$
then $x\in K$ and
$|Q^{(1)}_\gamma g(x_1,y_1)|
>
\lambda.$

Writing $x=(x_1,x')$ and $y=(y_1,y')$, by Fubini's theorem, the compactness of
$\operatorname{supp}\eta$, and the change of variables $z=\frac{x'-y'}{|x_1-y_1|}$, we have
$$
\begin{aligned}
&\nu_\gamma
\left(
\left\{x,y\in\mathbb R^N:\
\eta(x)\frac{|g(x_1)-g(y_1)|}{|x-y|^{1+\gamma}}
>
\lambda
\right\}
\right)\\
&\quad\le
\int_{E^{(1)}_{\lambda,\gamma}[g]}
\int_{\{x'\in\mathbb R^{N-1}:\ (x_1,x')\in\operatorname{supp}\eta\}}
\int_{\mathbb R^{N-1}}
\frac{dy'\,dx'}{\left(|x_1-y_1|^2+|x'-y'|^2\right)^{\frac{N-\gamma}{2}}}
\,dx_1\,dy_1\\
&\quad\lesssim
\int_{E^{(1)}_{\lambda,\gamma}[g]}
|x_1-y_1|^{\gamma-1}\,dx_1\,dy_1
=
\nu_{1,\gamma}
\left(E^{(1)}_{\lambda,\gamma}[g]\right),
\end{aligned}
$$
which further implies that
\begin{align}\label{1029-3}
\sup_{\lambda\in (0,\infty)}
\lambda\nu_\gamma
\left(
\left\{x,y\in\mathbb R^N:\
\eta(x)\frac{|g(x_1)-g(y_1)|}{|x-y|^{1+\gamma}}
>
\lambda
\right\}
\right)
\lesssim
[g]_{\dot W_{1,\gamma}}.
\end{align}
Using \eqref{1029-1}, \eqref{1029-2}, and \eqref{1029-3}, we conclude that \eqref{2131} holds. This finishes the proof of Proposition~\ref{prop:lifting}.\end{proof}

We show the non-normability in the $N$-dimensional case where $\gamma\in(-1,0)$.

\begin{theorem}\label{thm:nonnorm-open}
Let $\gamma\in(-1,0)$. Then neither $\dot{W}^{1,1}(\gamma)/\mathbb R$ nor $\dot{BV}(\gamma)/\mathbb R$ is normable.
\end{theorem}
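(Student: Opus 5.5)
We argue by contradiction. We lift the one-dimensional counterexample of Theorem~\ref{thm:open-interval-one-dim} through the operator $T$ of Proposition~\ref{prop:lifting}, and use the normability criterion Lemma~\ref{lem:normability-criterion} in $N$ dimensions. By the same reasoning as at the end of Theorem~\ref{thm:open-interval-one-dim}, non-normability of $\dot{BV}(\gamma)/\mathbb R$ follows from that of $\dot W^{1,1}(\gamma)/\mathbb R$, since the two quasi-norms coincide on $\dot W^{1,1}(\gamma)$. For $N=1$ the claim is Theorem~\ref{thm:open-interval-one-dim}, so we assume $N\ge2$ and that $\dot W^{1,1}(\gamma)/\mathbb R$ is normable with constant $C_1$ in \eqref{2133}.

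\textbf{Step 1 (decomposition into uniformly bounded pieces).} Let $g_m$ be as in \eqref{2145}, with the pieces $h_{m,j}:=2^{-m}g_0((\cdot-a_{m,j})\rho^{-m})$. Each piece is constant outside an interval contained in $[0,1]$ and satisfies $\|h_{m,j}\|_{L^\infty}\le 2^{-m}$. By Lemma~\ref{lem:affine}, together with the homogeneity of $[\cdot]_{\dot W_{1,\gamma}}$ under scalar multiplication, we get $\|h_{m,j}'\|_{L^1}+[h_{m,j}]_{\dot W_{1,\gamma}}= 2^{-m}\|g_0\|_{\dot W^{1,1}_1(\gamma)}$. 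Proposition~\ref{prop:lifting} then gives $\|Th_{m,j}\|_{\dot W^{1,1}(\gamma)}\lesssim 2^{-m}$. Since $T$ is linear and $g_m=\sum_j h_{m,j}$, the criterion \eqref{2133} yields
$$\|Tg_m\|_{\dot W^{1,1}(\gamma)}\le C_1\sum_{j=1}^{2^m}\|Th_{m,j}\|_{\dot W^{1,1}(\gamma)}\lesssim 1,$$
uniformly in $m$.

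\textbf{Step 2 (lower bound in $N$ dimensions).} This is the main obstacle: we must show $[Tg_m]_{\dot W_\gamma}\gtrsim m$. We restrict to $x,y\in[-\frac12,\frac32]^N$, where $\eta\equiv1$, so that $Tg_m(x)-Tg_m(y)=g_m(x_1)-g_m(y_1)$. We further restrict to pairs with $|x'-y'|\le|x_1-y_1|$, so that $|x-y|\sim|x_1-y_1|$. On this region, if $|Q^{(1)}_\gamma g_m(x_1,y_1)|>\frac14$ then $|Q_\gamma Tg_m(x,y)|>c\cdot\frac14$ for some constant $c>0$. For fixed $(x_1,y_1)\in[0,1]^2$, the measure of admissible $(x',y')$, weighted by $|x-y|^{\gamma-N}$, is at least a constant multiple of $|x_1-y_1|^{N-1}\cdot|x_1-y_1|^{\gamma-N}=|x_1-y_1|^{\gamma-1}$. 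This uses $|x_1-y_1|\le 1$, which ensures the $(N-1)$-dimensional slab of width $|x_1-y_1|$ fits inside the cube with measure comparable to $|x_1-y_1|^{N-1}$. Hence
$$\nu_\gamma\bigl(E_{c/4,\gamma}[Tg_m]\bigr)\gtrsim \nu_{1,\gamma}\bigl(E^{(1)}_{\frac14,\gamma}[g_m]\cap[0,1]^2\bigr).$$

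\textbf{Step 3 (localizing the one-dimensional estimate).} It remains to check that the pairs counted in the estimate $\nu_{1,\gamma}(E^{(1)}_{1/4,\gamma}[g_m])\ge m/C_2$ of \cite[(6-6)]{BSSVY} can be taken inside $[0,1]^2$. These pairs arise from points within the Cantor-type transition set, which lies in $[0,1]$, and their partners at distances comparable to the scale $\rho^k\le1$. I expect this to hold either by inspecting that proof or by rerunning its counting argument restricted to $[0,1]^2$; failing that, one can enlarge the cube on which $\eta\equiv1$, since $g_m$ is constant outside $[0,1]$. The lower bound then gives $\|Tg_m\|_{\dot W^{1,1}(\gamma)}\gtrsim m$, which contradicts Step~1 as $m\to\infty$.
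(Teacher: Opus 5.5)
Your proposal is correct and follows the paper's proof essentially step for step. It uses the same splitting of $g_m$ via \eqref{2145} with uniform bounds from Proposition~\ref{prop:lifting} and Lemma~\ref{lem:affine}, the same restriction to pairs where $\eta\equiv1$ and $|x'-y'|\lesssim|x_1-y_1|$, and the same appeal to the estimate (6-6) of Brezis et al.\ restricted to $(0,1)^2$, which the paper also simply cites without further justification.

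Your fallback in Step~3 is in fact sound, but it needs one more fact than you state. Since $0\le g_m\le1$, every pair in $E^{(1)}_{\frac14,\gamma}[g_m]$ satisfies $|x_1-y_1|<4^{1/(1+\gamma)}=:R$. Since $g_m$ is also constant on $(-\infty,0]$ and on $[1,\infty)$, this set lies in the fixed square $(-R,1+R)^2$, independent of $m$. A fixed cutoff equal to $1$ on a correspondingly larger cube therefore captures all of it.
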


\begin{proof}
The case $N=1$ follows from Theorem~\ref{thm:open-interval-one-dim}.
Assume now that $N\in\mathbb N\cap[2,\infty)$.

Let $\{g_m\}_{m\in\mathbb Z_+}$ be the same as in the proof of
Theorem~\ref{thm:open-interval-one-dim}. For any $m\in\mathbb Z_+$, there exist
$a_{m,1},\ldots,a_{m,2^m}\in\mathbb R$ such that \eqref{2145} holds.
Propositions \ref{prop:one-dimensional-finite} and
\ref{prop:lifting}, together with Lemma~\ref{lem:affine}, give
a constant $M\in(0,\infty)$, independent of $m$ and $j$, such that
$$
\left\|
T\left(
g_0\left(\frac{\cdot-a_{m,j}}{\rho^m}\right)
\right)
\right\|_{\dot{W}^{1,1}(\gamma)}
\le
M.
$$
If $\dot{W}^{1,1}(\gamma)/\mathbb R$ is normable, then, from Lemma \ref{lem:normability-criterion},
we infer that there exists a positive
constant $C_1$, independent of $m$, such that
$$
\|Tg_m\|_{\dot{W}^{1,1}(\gamma)}
\le
C_12^{-m}\sum_{j=1}^{2^m}
\left\|
T\left(
g_0\left(\frac{\cdot-a_{m,j}}{\rho^m}\right)
\right)
\right\|_{\dot{W}^{1,1}(\gamma)}
\le
CM.
$$
Let $\eta$ be as in Proposition \ref{prop:lifting}.
Then, obviously, we have, for any $s,t\in(0,1)$,
$z\in(-\frac14,\frac14)^{N-1}$, and
$w\in\mathbb R^{N-1}$ satisfying $|w|<\frac14|s-t|$, it holds that
$\eta(s,z)=\eta(t,z+w)=1$,
$|(s-t,w)|\le\frac54|s-t|$, and
\[
Tg_m(s,z)-Tg_m(t,z+w)=g_m(s)-g_m(t).
\]
Then there exists a positive constant $c$ such that
\[
\nu_\gamma(E_{\lambda,\gamma}[Tg_m])
\gtrsim \iint_{\left\{(s,t)\in(0,1)^2:
|g_m(s)-g_m(t)|>c\lambda|s-t|^{1+\gamma}\right\}}
|s-t|^{\gamma-1}\,ds\,dt=
\nu_{1,\gamma}\left(E^{(1)}_{c\lambda,\gamma}[g_m]\cap(0,1)^2\right).
\]
By this and \cite[(6-6)]{BSSVY}, we find that there exists
a positive constant $C_2$, depending on $\gamma$, such that
$$
\|Tg_m\|_{\dot{W}^{1,1}(\gamma)}
\ge
[Tg_m]_{\dot W_\gamma}
\ge
\frac1{4c}\nu_\gamma(E_{\frac1{4c},\gamma}[Tg_m])
\gtrsim
\nu_{1,\gamma}\left(E^{(1)}_{\frac14,\gamma}[g_m]\cap(0,1)^2\right)
\ge
\frac{m}{C_2}
\to\infty
$$
as $m\to\infty$, which is a contradiction.
This implies that
$\dot{W}^{1,1}(\gamma)/\mathbb R$ is not normable.

If $\dot{BV}(\gamma)/\mathbb R$ were normable, then the corresponding
norm restricted to the subspace $\dot{W}^{1,1}(\gamma)/\mathbb R$ would be
equivalent to $\|\cdot\|_{\dot{W}^{1,1}(\gamma)}$ because, for any
$u\in\dot{W}^{1,1}(\gamma)$,
$\|u\|_{\dot{BV}(\gamma)}
=
\|u\|_{\dot{W}^{1,1}(\gamma)}.$
This contradicts the non-normability of
$\dot{W}^{1,1}(\gamma)/\mathbb R$ and hence finishes the proof of Theorem~\ref{thm:nonnorm-open}.
\end{proof}

We turn to the endpoint case $\gamma=-1$.
To prove non-normability, we relate the BSVY quasi-seminorm to the
classical weak $L^1$ quasi-norm. To extract an ordinary one-variable level set from
the two-variable difference level set, we let $u$ vanish on
$(-\infty,0]$. Then, for $x\leq0<y$, the difference
$|u(x)-u(y)|$ reduces to $|u(y)|$, and integration in $x$ produces the
measure $dy/y$. Under the logarithmic change of variables $y=e^t$, this
measure becomes $dt$, so this part yields the usual level-set measure of
$U(t):=u(e^t)$ and hence its weak $L^1$ quasi-norm. For $x,y>0$, the
changes of variables $x=e^s$ and $y=e^t$ transform the BSVY measure into
the measure $\nu_K$ defined below.

\begin{definition}\label{def:hyperbolic-kernel}
For any measurable set $E\subset \mathbb R\times\mathbb R$, we define
$$
\nu_K(E):=\iint_{E} \frac{1}{4\sinh^2(\frac{s-t}2)}\,ds\,dt.
$$
For any measurable function $U:\mathbb R\to\mathbb R$, let
$$
\mathcal L(U):=
\sup_{\lambda\in (0,\infty)}\lambda\,\nu_K(\{(s,t)\in\mathbb R^2:|U(s)-U(t)|>\lambda\}).
$$
\end{definition}

\begin{lemma}\label{lem:rho-finite-local}
Let $\rho\in C^\infty(\mathbb R)$ satisfy
$0\le \rho\le 1$,
$\rho(t)=0$ for $t\le 0$,
and $\rho(t)=1$ for $t\ge 1$.
Then $\mathcal L(\rho)$ is finite.
\end{lemma}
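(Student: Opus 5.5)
We must show $\sup_\lambda \lambda\,\nu_K(\{|\rho(s)-\rho(t)|>\lambda\})<\infty$. Since $0\le\rho\le1$, the level set is empty for $\lambda\ge1$, so only $\lambda\in(0,1)$ matters. By symmetry we restrict to $s>t$ and write $h:=s-t>0$, so that
$$
\nu_K(\{|\rho(s)-\rho(t)|>\lambda\})=2\int_0^\infty \frac{m_\lambda(h)}{4\sinh^2(h/2)}\,dh,
\qquad
m_\lambda(h):=\left|\{t\in\mathbb R:|\rho(t+h)-\rho(t)|>\lambda\}\right|.
$$
The kernel behaves like $h^{-2}$ for $h\le1$ and like $e^{-h}$ for $h\ge1$. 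We therefore split the $h$-integral at $h=1$.

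\emph{Large $h$.} The difference $\rho(t+h)-\rho(t)$ vanishes unless $t<1$ and $t+h>0$. Hence $m_\lambda(h)\le h+1$ for every $h$. Since $\int_1^\infty (h+1)\sinh^{-2}(h/2)\,dh<\infty$ and $\lambda<1$, this part contributes at most a constant after multiplying by $\lambda$.

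\emph{Small $h$, $h\in(0,1)$.} Put $L:=\|\rho'\|_{L^\infty}$. Then $|\rho(t+h)-\rho(t)|\le Lh$, so $m_\lambda(h)=0$ unless $h>\lambda/L$. For $h<1$ we also have $m_\lambda(h)\le h+1\le2$. It follows that
$$
\lambda\int_{\lambda/L}^{1}\frac{2}{4\sinh^2(h/2)}\,dh\lesssim\lambda\int_{\lambda/L}^\infty h^{-2}\,dh\lesssim L,
$$
using $\sinh(h/2)\ge h/2$. This crude bound already suffices, because the kernel's $h^{-2}$ singularity is exactly balanced by the Lipschitz cutoff $h>\lambda/L$. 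Combining the two ranges gives $\lambda\,\nu_K(\cdots)\lesssim 1+L$ uniformly in $\lambda$, so $\mathcal L(\rho)<\infty$.

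The only point needing care is the small-$h$ regime. There $m_\lambda(h)$ need not be small, so the estimate relies entirely on the Lipschitz threshold $h>\lambda/L$. In this regime a weaker bound on $m_\lambda(h)$ such as $h+1$ is harmless, since $\sinh^{-2}(h/2)$ is integrable near infinity and behaves like $h^{-2}$ near zero. No finer structure of $\rho$ is required.
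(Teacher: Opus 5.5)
Your proof is correct. It rests on the same two facts as the paper's proof. First, the Lipschitz bound forces $|s-t|>\lambda/L$, which exactly offsets the $|s-t|^{-2}$ singularity of the kernel. Second, $\sinh^{-2}$ decays exponentially at large separation.

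What differs is the decomposition.
\begin{itemize}
\item The paper splits the level set by \emph{position}. Pairs with $s\in(-1,2)$ are handled by the Lipschitz cutoff together with the global bound $\sinh^{-2}(\tfrac r2)\lesssim r^{-2}$, and the bounded $s$-range supplies the finiteness. The remaining pairs straddle $[0,1]$, with $s\le0$ and $t\ge1$ up to symmetry. Their total $\nu_K$-mass is bounded by a constant times $e^{-1}$, so no $\lambda$-dependence is needed there.
\item You split by \emph{separation} $h=|s-t|$ and use the single support bound $m_\lambda(h)\le h+1$ for every $h$. You pay for its linear growth with the exponential tail on $h\ge1$.
\end{itemize}

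Your version is the exact analogue of the proof of Proposition~\ref{prop:one-dimensional-finite}, with $h^{\gamma-1}$ replaced by $(4\sinh^2(h/2))^{-1}$. It also shows why the logarithmic change of variables matters at $\gamma=-1$. With the Euclidean weight $h^{-2}$, the contribution of the linear part of $m_\lambda(h)\le h+1$ would diverge logarithmically at infinity; only the exponential decay of the hyperbolic kernel rescues it. The paper's position-based split avoids the distribution function altogether and gives a slightly shorter tail estimate.
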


\begin{proof}
Define the Lipschitz constant
\begin{align}\label{909}
L_\rho:=\sup_{s\neq t}\frac{|\rho(s)-\rho(t)|}{|s-t|}<\infty
\end{align}
and
$$
E_\lambda:=\{(s,t)\in\mathbb R^2:|\rho(s)-\rho(t)|>\lambda\}.
$$
If $\lambda\in(1,\infty)$, then $\{|\rho(s)-\rho(t)|>\lambda\}=\emptyset$.
Based on this, it suffices to consider the case $\lambda\in(0,1]$.
By this and symmetry, we conclude that
\begin{align*}
\mathcal L(\rho)&\lesssim\sup_{\lambda\in(0,1)}\lambda
\int_{-1}^{2}\int_{\mathbb R}\frac{1}{\sinh^2(\frac{s-t}2)}
\mathbf1_{E_\lambda}(s,t)\,dt\,ds
+\sup_{\lambda\in(0,1)}\lambda\int_{-\infty}^0\int_1^{\infty}
\frac{1}{\sinh^2(\frac{s-t}2)}\,dt\,ds\\
&=:\rm{I}+\rm{II}.
\end{align*}
We first consider $\rm I$.
Using \eqref{909}, we find that,
for any $s,t\in E_\lambda$,
$|s-t|>\frac\lambda{L_\rho}$ and hence
$$
\mathrm I
\lesssim\sup_{\lambda\in(0,1)} \lambda\int_{|r|>\frac{\lambda}{L_\rho}}\frac{1}{\sinh^2(\frac{r}2)}\,dr
\lesssim\sup_{\lambda\in(0,1)} \lambda\int_{|r|>\frac{\lambda}{L_\rho}}\frac{1}{r^2}\,dr\lesssim
L_\rho.
$$
For $\rm{II}$, we have
\begin{align*}
\mathrm{II}\le\int_{-\infty}^0\int_1^{\infty}
\frac{1}{\sinh^2(\frac{s-t}2)}\,dt\,ds
\lesssim\int_{-\infty}^0\int_1^\infty
e^{-|s-t|}\,dt\,ds=
e^{-1}<\infty.
\end{align*}
This finishes the proof of Lemma \ref{lem:rho-finite-local}.
\end{proof}

\begin{definition}\label{def:weak-L1}
Let $p\in(0,\infty)$.
The weak Lebesgue space $L^{p,\infty}(\mathbb R^N)$
is defined to be the set of all $f\in\mathscr M(\mathbb R^N)$ such that
$$
\|f\|_{L^{p,\infty}(\mathbb R^N)}
:=
\sup_{\lambda\in (0,\infty)}
\lambda\,\left|\{x\in\mathbb R^N:|f(x)|>\lambda\}\right|^{\frac1p}
<\infty.
$$
\end{definition}

\begin{lemma}\label{lem:log-formula}
Let $U\in\mathscr M(\mathbb R)$ be compactly supported, and let
$u(x):=0$ for $x\le0$ and $u(x):=U(\log x)$ for $x>0$. Then $u$ is compactly supported,
\begin{align}\label{1058}
[u]_{\dot W_{1,-1}}
=
\sup_{\lambda\in (0,\infty)}\lambda(
2|\{t\in\mathbb R:|U(t)|>\lambda\}|
+
\nu_K(\{(s,t)\in\mathbb R^2:|U(s)-U(t)|>\lambda\})
),
\end{align}
and hence
\begin{align}\label{621}
2\|U\|_{L^{1,\infty}}
\le
[u]_{\dot W_{1,-1}}
\le
2\|U\|_{L^{1,\infty}}+\mathcal L(U).
\end{align}
\end{lemma}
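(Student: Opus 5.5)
The plan is a direct computation of $\nu_{1,-1}(E^{(1)}_{\lambda,-1}[u])$, splitting $\mathbb R^2$ according to the signs of $x$ and $y$. Since $\gamma=-1$, we have $Q^{(1)}_{-1}u(x,y)=u(x)-u(y)$, and the weight is $|x-y|^{-2}$. Compact support of $u$ is immediate: if $\operatorname{supp}U\subset[-R,R]$, then $u$ vanishes outside $(0,e^R]$.

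The set $E:=E^{(1)}_{\lambda,-1}[u]$ decomposes into four pieces.
\begin{itemize}
\item On $\{x\le0,\ y\le0\}$, we have $u(x)-u(y)=0$, so this piece contributes nothing.
\item On $\{x\le 0<y\}$, the condition becomes $|U(\log y)|>\lambda$. For fixed $y>0$,
$$\int_{-\infty}^0(y-x)^{-2}\,dx=\frac1y .$$
After the substitution $y=e^t$ (so $dy/y=dt$), this piece contributes exactly $|\{t:|U(t)|>\lambda\}|$.
\item The piece $\{y\le0<x\}$ contributes the same amount by symmetry, which gives the factor $2$.
\item On $\{x,y>0\}$, substitute $x=e^s$ and $y=e^t$, so $dx\,dy=e^{s+t}\,ds\,dt$. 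Then
$$(e^s-e^t)^2=e^{s+t}\bigl(e^{(s-t)/2}-e^{-(s-t)/2}\bigr)^2=4e^{s+t}\sinh^2\Bigl(\frac{s-t}2\Bigr),$$
so the weight becomes exactly $\frac{1}{4\sinh^2((s-t)/2)}$. The condition becomes $|U(s)-U(t)|>\lambda$, so this piece is $\nu_K(\{|U(s)-U(t)|>\lambda\})$.
\end{itemize}
The diagonal has measure zero. Summing the pieces, multiplying by $\lambda$, and taking the supremum gives \eqref{1058}.

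For \eqref{621}, both terms inside the supremum in \eqref{1058} are nonnegative. Dropping the $\nu_K$ term gives the lower bound $2\|U\|_{L^{1,\infty}}$. For the upper bound, use $\sup(a+b)\le\sup a+\sup b$, which gives $2\|U\|_{L^{1,\infty}}+\mathcal L(U)$.

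There is no real obstacle here. The only care needed is the hyperbolic identity for the Jacobian in the last piece, and noting that all integrands are nonnegative, so Tonelli justifies the manipulations even when some quantities are infinite. Compact support of $U$ is not needed for the identity itself; it is used only for the support claim.
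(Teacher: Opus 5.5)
Your proposal is correct and follows essentially the same route as the paper: the same four-region split by the signs of $x$ and $y$, the same inner integral $\int_{-\infty}^0 (y-x)^{-2}\,dx = 1/y$ followed by $y=e^t$, and the same substitution $x=e^s$, $y=e^t$ with $e^{s+t}/(e^s-e^t)^2 = 1/\bigl(4\sinh^2(\frac{s-t}{2})\bigr)$. You then obtain \eqref{621} from \eqref{1058} exactly as the paper does, by dropping the nonnegative $\nu_K$ term for the lower bound and splitting the supremum for the upper bound.
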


\begin{proof}
Since $U$ has compact support, $u$ is compactly supported as well. Fix $\lambda\in (0,\infty)$. If $x,y\le0$, then $|u(x)-u(y)|=0$. For the mixed region $x\le0<y$, we have
\begin{align}\label{2003-1}
\iint\limits_{\genfrac{}{}{0pt}{}{x\le0<y}
{|u(x)-u(y)|>\lambda}}\frac{dx\,dy}{|x-y|^2}
&=
\int\limits_{\genfrac{}{}{0pt}{}{y\in(0,\infty)}
{|U(\log y)|>\lambda}}
\left[\int_{-\infty}^0\frac{dx}{(y-x)^2}\right]dy\nonumber\\
&=
\int\limits_{\genfrac{}{}{0pt}{}{y\in(0,\infty)}
{|U(\log y)|>\lambda}}\frac{dy}{y}
=
|\{t\in\mathbb R:|U(t)|>\lambda\}|.
\end{align}
The symmetric mixed region $y\le0<x$ has the same contribution.

When $x,y>0$, using $s=\log x$ and $t=\log y$, we obtain
\begin{align}\label{2003-2}
\iint\limits_{\genfrac{}{}{0pt}{}{x,y\in(0,\infty)}
{|u(x)-u(y)|>\lambda}}\frac{dx\,dy}{|x-y|^2}
&=
\iint\limits_{\genfrac{}{}{0pt}{}{s,t\in\mathbb R}
{|U(s)-U(t)|>\lambda}}
\frac{e^{s+t}\,ds\,dt}{(e^s-e^t)^2}\nonumber\\
&=
\iint\limits_{\genfrac{}{}{0pt}{}{s,t\in\mathbb R}
{|U(s)-U(t)|>\lambda}}
\frac{ds\,dt}{4\sinh^2(\frac{s-t}{2})}.
\end{align}
Using \eqref{2003-1}, its symmetric counterpart, and \eqref{2003-2}, we obtain \eqref{1058}.
The definition of $\mathcal L(U)$ then implies that \eqref{621} holds.
This finishes the proof of Lemma~\ref{lem:log-formula}.
\end{proof}

Let $\rho\in C^\infty(\mathbb R)$ be a non-decreasing function satisfying the
assumptions of Lemma~\ref{lem:rho-finite-local}.
Let $m\in\mathbb N$, $H_m:=\sum_{k=1}^m\frac1k$, and $L_m:=L_0H_m$, where
$L_0:=\max\{4,64\mathcal L(\rho),8\|\rho'\|_{L^1(\mathbb R)}\}.$
For any $L\in[2,\infty)$ and $t\in\mathbb R$, let
$\Phi^{(L)}(t):=\frac1L[\rho(t)-\rho(t-L)]$.
Then we have
$0\le \Phi^{(L)}\le \frac1L$,
$\operatorname{supp}\Phi^{(L)}\subset [0,L+1]$,
and $\Phi^{(L)}(t)=\frac1L\text{ for }1\le t\le L$.
For any $j\in\{1,\dots,m\}$, we define
$a_{m,j}:=2+(j-1)(L_m+2)$
and the atoms
$\Phi_{m,j}(t):=\Phi^{(L_m)}(t-a_{m,j})$.
We immediately find that the supports of these atoms are pairwise disjoint.
For any $c=(c_1,\dots,c_m)\in\mathbb R^m$ and $t\in\mathbb R$, we define
$$
U_{m,c}(t):=\sum_{j=1}^m c_j\Phi_{m,j}(t)
$$
and
$$
u_{m,c}(x):=
\begin{cases}
0,&x\le 0,\\
U_{m,c}(\log x),&x>0.
\end{cases}
$$

For any $m\in\mathbb N$ and
$c=(c_1,\ldots,c_m)\in\mathbb R^m$, let
$$
\|c\|_{\ell^{1,\infty}}
:=
\sup_{\mu\in(0,\infty)}
\mu\,\#\{j\in\{1,\ldots,m\}:|c_j|>\mu\},
$$
where the \emph{symbol} $\#$ denotes the cardinality of a set.
In what follows, let $c_1^*\ge\cdots\ge c_m^*\ge0$ be the decreasing rearrangement of
$|c_1|,\ldots,|c_m|$. Then it is easy to show that
$\|c\|_{\ell^{1,\infty}}=\max_{1\le k\le m}kc_k^*$.

\begin{lemma}\label{lem:tail-block}
For any $m\in\mathbb N$ and $c\in\mathbb R^m$, we have
$$
\frac12\|c\|_{\ell^{1,\infty}}
\le
\left\|U_{m,c}\right\|_{L^{1,\infty}}
\le
\frac32\|c\|_{\ell^{1,\infty}}.
$$
\end{lemma}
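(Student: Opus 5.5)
Since the atoms $\Phi_{m,j}$ have pairwise disjoint supports, for every $t\in\mathbb R$ we have $|U_{m,c}(t)|=|c_j|\Phi^{(L_m)}(t-a_{m,j})$ if $t\in a_{m,j}+[0,L_m+1]$, and $U_{m,c}(t)=0$ otherwise. Hence, for any $\lambda\in(0,\infty)$, the level set splits as a disjoint union,
$$
\left|\{t:|U_{m,c}(t)|>\lambda\}\right|
=\sum_{j=1}^m\left|\left\{t:\Phi^{(L_m)}(t)>\frac{\lambda}{|c_j|}\right\}\right|,
$$
with the convention that the $j$-th term is $0$ if $c_j=0$. The plan is to compare each term with $L_m\mathbf 1_{\{|c_j|>\lambda L_m\}}$, using the plateau $\Phi^{(L_m)}=\frac1{L_m}$ on $[1,L_m]$, the bounds $0\le\Phi^{(L_m)}\le\frac1{L_m}$, and $\operatorname{supp}\Phi^{(L_m)}\subset[0,L_m+1]$.

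\emph{Upper bound.} Since $\Phi^{(L_m)}\le\frac1{L_m}$, the $j$-th set is empty unless $|c_j|>\lambda L_m$, and in that case its measure is at most $L_m+1$. Thus
$$
\lambda\left|\{|U_{m,c}|>\lambda\}\right|\le \lambda(L_m+1)\,\#\{j:|c_j|>\lambda L_m\}.
$$
Putting $\mu:=\lambda L_m$, the right-hand side equals $\frac{L_m+1}{L_m}\,\mu\,\#\{j:|c_j|>\mu\}$. Since $L_m\ge L_0\ge4$, we have $\frac{L_m+1}{L_m}\le\frac54\le\frac32$. Taking the supremum over $\lambda$ gives the right inequality.

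\emph{Lower bound.} If $|c_j|>\lambda L_m$, then on $[1,L_m]$ we have $|c_j|\Phi^{(L_m)}=\frac{|c_j|}{L_m}>\lambda$. So the $j$-th set contains an interval of length $L_m-1$, and
$$
\lambda\left|\{|U_{m,c}|>\lambda\}\right|\ge\frac{L_m-1}{L_m}\,\mu\,\#\{j:|c_j|>\mu\}\quad\text{with }\mu=\lambda L_m .
$$
Since $L_m\ge4$, the factor is at least $\frac34\ge\frac12$. As $\lambda$ ranges over $(0,\infty)$, so does $\mu$, and taking the supremum yields the left inequality.

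The only point needing care is the reparametrization $\mu=\lambda L_m$, which is a bijection of $(0,\infty)$, together with the constraint $L_m\ge 4$ from the choice of $L_0$ that keeps the constants $\frac{L_m\pm1}{L_m}$ within $[\frac12,\frac32]$. The rest is a direct computation and I do not expect a real obstacle.
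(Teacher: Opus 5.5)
Your proof is correct and follows the paper's route. You use the disjoint supports, the bound $\Phi^{(L_m)}\le 1/L_m$ together with the support length $L_m+1$ for the upper estimate, and the plateau on $[1,L_m]$ for the lower estimate; the only differences are cosmetic: you substitute $\mu=\lambda L_m$ directly in the definition of $\|c\|_{\ell^{1,\infty}}$ rather than using the rearrangement formula $\max_k kc_k^*$, and you get the sharper constants $\frac34$ and $\frac54$.
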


\begin{proof}
We first consider the upper bound. If
$|U_{m,c}(t)|>\lambda$, then, for some $j$, $|c_j|>\lambda L_m$.
Using the fact that any atom is supported in an interval of length
$L_m+1\le\frac32 L_m$, we obtain
$$
|\{t\in\mathbb R:|U_{m,c}(t)|>\lambda\}|
\le
\frac32 L_m\#\{j:|c_j|>\lambda L_m\}.
$$
Multiplying by $\lambda$ and taking the supremum over $\lambda\in (0,\infty)$, we obtain
$\|U_{m,c}\|_{L^{1,\infty}}\le\frac32\|c\|_{\ell^{1,\infty}}$.

For the lower bound, fix $k\in\{1,\ldots,m\}$.
The definition of $U_{m,c}$ gives
$$\left|\left\{t\in\mathbb R:\ \left|U_{m,c}(t)\right|
\ge \frac{c_k^*}{L_m}\right\}\right|\ge k(L_m-1),$$
which further implies that
$$
\|U_{m,c}\|_{L^{1,\infty}}
\ge
\frac{c_k^*}{L_m}k(L_m-1)
\ge
\frac12kc_k^*.
$$
Taking the maximum over $k$, we obtain the desired lower bound.
\end{proof}

We have the following connections between $\dot W_{1,-1}$ and the weak $\ell^1$ norm.

\begin{proposition}\label{prop:1d-blocks}
For any $m\in\mathbb N$ and $c\in\mathbb R^m$,
$$
\|c\|_{\ell^{1,\infty}}
\le [u_{m,c}]_{\dot W_{1,-1}}
\le 4\,\|c\|_{\ell^{1,\infty}},
$$
$$
\|u_{m,c}'\|_{L^1(\mathbb R)}
\le \frac14\,\|c\|_{\ell^{1,\infty}},
\quad\text{and}\quad
\|u_{m,c}\|_{L^\infty(\mathbb R)}
\le \frac14\,\|c\|_{\ell^{1,\infty}}.
$$
\end{proposition}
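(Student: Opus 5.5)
The plan is to work entirely on the logarithmic side through Lemma~\ref{lem:log-formula}, combined with Lemma~\ref{lem:tail-block}. Two elementary facts about weak $\ell^1$ are used throughout. Since $c_k^*\le \|c\|_{\ell^{1,\infty}}/k$ for every $k$, we have, for every $\tau\ge0$,
$$
\sum_{j:\,|c_j|>\tau}|c_j|\le \sum_{k=1}^m c_k^*\le H_m\|c\|_{\ell^{1,\infty}}.
$$
This is exactly why $L_m=L_0H_m$ is chosen.

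\emph{The $L^\infty$ and $L^1$ bounds.} The supports of the $\Phi_{m,j}$ are pairwise disjoint and $0\le\Phi^{(L_m)}\le 1/L_m$. Hence
$$
\|u_{m,c}\|_{L^\infty}=\|U_{m,c}\|_{L^\infty}\le \max_j|c_j|/L_m\le \|c\|_{\ell^{1,\infty}}/4,
$$
using $c_1^*\le\|c\|_{\ell^{1,\infty}}$ and $L_m\ge L_0\ge4$. For the derivative, the change of variables $x=e^t$ gives $\|u_{m,c}'\|_{L^1(\mathbb R)}=\|U_{m,c}'\|_{L^1(\mathbb R)}$; the left half-line contributes nothing because $U_{m,c}$ vanishes near $-\infty$, so $u_{m,c}$ is continuous at $0$. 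Since $(\Phi^{(L)})'=[\rho'(t)-\rho'(t-L)]/L$, we get
$$
\|U_{m,c}'\|_{L^1}\le \frac{2\|\rho'\|_{L^1}}{L_m}\sum_j|c_j|\le \frac{2\|\rho'\|_{L^1}H_m}{L_0H_m}\|c\|_{\ell^{1,\infty}}\le\frac14\|c\|_{\ell^{1,\infty}},
$$
because $L_0\ge 8\|\rho'\|_{L^1}$.

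\emph{The lower bound and the reduction of the upper bound.} By \eqref{621} and Lemma~\ref{lem:tail-block},
$$
[u_{m,c}]_{\dot W_{1,-1}}\ge 2\|U_{m,c}\|_{L^{1,\infty}}\ge\|c\|_{\ell^{1,\infty}}.
$$
For the upper bound, \eqref{621} together with Lemma~\ref{lem:tail-block} gives
$$
[u_{m,c}]_{\dot W_{1,-1}}\le 3\|c\|_{\ell^{1,\infty}}+\mathcal L(U_{m,c}).
$$
So everything reduces to proving $\mathcal L(U_{m,c})\le\|c\|_{\ell^{1,\infty}}$.

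\emph{The bound on $\mathcal L(U_{m,c})$ (main obstacle).} Write
$$
U_{m,c}=\sum_{j=1}^m\frac{c_j}{L_m}\bigl[\rho(\cdot-a_{m,j})-\rho(\cdot-a_{m,j}-L_m)\bigr],
$$
which is a sum of $2m$ translated copies of $\pm\rho$. For a fixed pair $(s,t)$, the difference $\rho(s-b)-\rho(t-b)$ is nonzero only if $[s,t]$ meets $[b,b+1]$; since these transition intervals are separated by distance at least $1$, at most a bounded number of the $2m$ terms are nonzero, and I expect at most four. Consequently, if $|U_{m,c}(s)-U_{m,c}(t)|>\lambda$, then some term satisfies
$$
\frac{|c_j|}{L_m}\,|\rho(s-b)-\rho(t-b)|>\frac{\lambda}{4}.
$$
Because $|\rho(s)-\rho(t)|\le1$, this forces $|c_j|>\lambda L_m/4$. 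By the translation invariance of $\nu_K$ and the definition of $\mathcal L(\rho)$, each such term contributes at most $4|c_j|\mathcal L(\rho)/(\lambda L_m)$. Summing over the two shifts $b$ per $j$ and over the $j$ with $|c_j|>\lambda L_m/4$, and applying the weak-$\ell^1$ summation bound above, gives
$$
\lambda\,\nu_K\bigl(\{|U_{m,c}(s)-U_{m,c}(t)|>\lambda\}\bigr)\le \frac{8\mathcal L(\rho)}{L_0H_m}H_m\|c\|_{\ell^{1,\infty}}\le\|c\|_{\ell^{1,\infty}},
$$
since $L_0\ge64\mathcal L(\rho)$. The delicate point is the claim that only boundedly many terms are active for each pair, which makes the union bound lose only a constant factor rather than a factor of $m$. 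Everything else is routine bookkeeping of constants, and the choice $L_0\ge64\mathcal L(\rho)$ leaves enough slack for it.
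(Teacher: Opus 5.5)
\textbf{Gap in the bound on $\mathcal L(U_{m,c})$.} Your $L^\infty$ and $L^1$ bounds, the lower bound, and the reduction $[u_{m,c}]_{\dot W_{1,-1}}\le 3\|c\|_{\ell^{1,\infty}}+\mathcal L(U_{m,c})$ are correct and match the paper. The gap is in the step you flag as delicate. You claim that, for a fixed pair $(s,t)$, at most four of the $2m$ terms $\frac{c_j}{L_m}[\rho(s-b)-\rho(t-b)]$ with $b\in\{a_{m,j},a_{m,j}+L_m\}$ are nonzero. This is false. The transition intervals are indeed separated by distance at least $1$, but $[s,t]$ can be arbitrarily long. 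So that observation only bounds the number of active terms by roughly $|s-t|$, not by a constant.

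Concretely, take $s$ in the plateau $[a_{m,1}+1,a_{m,1}+L_m]$ of the first atom and $t$ in the plateau of the $m$-th atom. Then for every $i\in\{2,\ldots,m-1\}$ the two pieces of the $i$-th atom are $-c_i/L_m$ and $+c_i/L_m$. So when all $c_i\neq0$, exactly $2m-2$ terms are nonzero; they cancel in pairs, and your counting ignores this. The deduction ``the sum exceeds $\lambda$ with at most four nonzero summands, so one exceeds $\lambda/4$'' is therefore unjustified. A union bound built on the actual number of active terms would lose a factor of order $m$, which destroys the estimate.

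\textbf{How to repair it.} The inclusion you actually use is true: $|U_{m,c}(s)-U_{m,c}(t)|>\lambda$ implies $\frac{|c_j|}{L_m}|\rho(s-b)-\rho(t-b)|>\frac\lambda4$ for some $j$ and some $b\in\{a_{m,j},a_{m,j}+L_m\}$. The reason, however, is the cancellation inside each atom, so group the two $\rho$-pieces of each $\Phi_{m,j}$ together.
\begin{enumerate}
\item The supports of the $\Phi_{m,j}$ are pairwise disjoint, so $\Phi_{m,i}(s)-\Phi_{m,i}(t)\neq0$ for at most two indices $i$, namely the atoms containing $s$ or $t$.
\item Hence some $j$ satisfies $|c_j[\Phi_{m,j}(s)-\Phi_{m,j}(t)]|>\frac\lambda2$.
\item Writing this difference as its two $\rho$-pieces, one of them exceeds $\frac\lambda4$.
\end{enumerate}
With this replacement the rest of your bookkeeping goes through unchanged and gives $\mathcal L(U_{m,c})\le\frac{8\mathcal L(\rho)}{L_0}\|c\|_{\ell^{1,\infty}}\le\frac18\|c\|_{\ell^{1,\infty}}$. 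This is exactly the paper's argument. The paper works with the atoms $\Phi_{m,j}$ directly and applies the quasi-triangle inequality inside each atom to get $\mathcal L(c_j\Phi_{m,j})\le 4|c_j|\mathcal L(\rho)/L_m$.
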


\begin{proof}
By the quasi-triangle inequality of the $L^{1,\infty}$ quasi-norm,
we conclude that, for any $j\in\{1,\ldots,m\}$,
\begin{align}\label{2217}
\mathcal L(c_j\Phi_{m,j})\le2\frac{|c_j|}{L_m}\mathcal L(\rho(\cdot-a_{m,j}))
+2\frac{|c_j|}{L_m}\mathcal L(\rho(\cdot-a_{m,j}-L_m))=\frac{4|c_j|\mathcal L(\rho)}{L_m}.
\end{align}
From the fact that, for any $m\in\mathbb N$, the supports of the atoms
$\{\Phi_{m,j}\}_{j\in\{1,\ldots,m\}}$
are pairwise disjoint,
we deduce that,
if
$$
|U_{m,c}(s)-U_{m,c}(t)|>\lambda,
$$
then there exists $j\in\{1,\ldots,m\}$ such that
$$
\left|c_j\left[\Phi_{m,j}(s)-\Phi_{m,j}(t)\right]\right|
>\frac\lambda2.
$$
Together with \eqref{2217}, this gives
\begin{align*}
\mathcal L(U_{m,c})
&\le2\sup_{\lambda\in (0,\infty)}
\sum_{j=1}^m
\lambda\,\nu_K\left(
\left\{
\left|c_j\left[\Phi_{m,j}(s)-\Phi_{m,j}(t)\right]\right|>\lambda
\right\}
\right)
\le
\frac{8\mathcal L(\rho)}{L_m}
\sup_{\lambda\in (0,\infty)}
\sum_{|c_j|>\lambda L_m}|c_j|.
\end{align*}
Obviously, for any $k\in\{1,\dots,m\}$,
$
kc_k^*\le \|c\|_{\ell^{1,\infty}}$,
and hence
$c_k^*
\le
\frac{\|c\|_{\ell^{1,\infty}}}{k}$,
which further implies that
\begin{align}\label{2157}
\sum_{j=1}^m |c_j|
=
\sum_{k=1}^m c_k^*
\le
H_m\,\|c\|_{\ell^{1,\infty}}.
\end{align}
Using the fact that $L_m=L_0H_m$ and $L_0\ge16\mathcal L(\rho)$, we obtain
$$
\mathcal L(U_{m,c})
\le
\frac{8\mathcal L(\rho)}{L_0}\,
\|c\|_{\ell^{1,\infty}}
\le
\frac12\,\|c\|_{\ell^{1,\infty}}.
$$
From Lemmas \ref{lem:log-formula} and \ref{lem:tail-block},
we deduce that
$$
[u_{m,c}]_{\dot W_{1,-1}}
\ge
2\|U_{m,c}\|_{L^{1,\infty}}
\ge
\|c\|_{\ell^{1,\infty}}
$$
and
$$
[u_{m,c}]_{\dot W_{1,-1}}
\le
2\|U_{m,c}\|_{L^{1,\infty}}+\mathcal L(U_{m,c})
\le
3\,\|c\|_{\ell^{1,\infty}}
+
\frac12\,\|c\|_{\ell^{1,\infty}}
\le
4\,\|c\|_{\ell^{1,\infty}}.
$$

Furthermore,
using the definitions of both $u_{m,c}$ and $U_{m,c}$ and changing the variables,
we conclude that
\begin{align}\label{2309}
\|u_{m,c}'\|_{L^1(\mathbb R)}=
\|U_{m,c}'\|_{L^1(\mathbb R)}
\le
\frac{2\|\rho'\|_{L^1(\mathbb R)}}{L_m}
\sum_{j=1}^m |c_j|.
\end{align}
By \eqref{2157}, \eqref{2309}, and the fact that
$L_m=L_0H_m$ and $L_0\ge8\|\rho'\|_{L^1(\mathbb R)}$,
we obtain
$$
\|u_{m,c}'\|_{L^1(\mathbb R)}
\le
\frac14\,\|c\|_{\ell^{1,\infty}}.
$$

Finally, from the fact that, for any $m\in\mathbb N$, the supports of the atoms
$\{\Phi_{m,j}\}_{j\in\{1,\ldots,m\}}$
are disjoint and
$0\le\Phi_{m,j}\le L_m^{-1}$, it follows that
$$
\|U_{m,c}\|_{L^\infty(\mathbb R)}
\le
\frac1{L_m}\max_{1\le j\le m}|c_j|\le
\frac{\|c\|_{\ell^{1,\infty}}}{L_m},
$$
which, together with the fact that $L_m\ge4$, further implies that
$$
\|u_{m,c}\|_{L^\infty(\mathbb R)}
=
\|U_{m,c}\|_{L^\infty(\mathbb R)}
\le
\frac14\,\|c\|_{\ell^{1,\infty}}.
$$
This finishes the proof of Proposition \ref{prop:1d-blocks}.
\end{proof}

We lift the conclusion from the one-dimensional case to the
$N$-dimensional case.

\begin{proposition}\label{lem:higher-blocks}
Let $N\in\mathbb N\cap[2,\infty)$. Choose $\psi\in C_{\mathrm c}^\infty(\mathbb R^{N-1})$ such that
$0\le\psi\le1$, $\psi=1$ on $B_{N-1}(\mathbf0,1)$, and
$\operatorname{supp}\psi\subset B_{N-1}(\mathbf0,2)$.
For any $m\in\mathbb N$, $c\in\mathbb R^m$, and $(x_1,x')\in\mathbb R\times\mathbb R^{N-1}$,
let $r_m:=4e^{m(L_m+2)+1}$,
$v_{m,c}(x_1):=u_{m,c}(r_mx_1)$, and
$$
F_{m,c}(x_1,x')
:=
v_{m,c}(x_1)\psi(x').
$$
Then there exists a positive constant $C$, depending only on $N$ and $\psi$,
such that
$$
C^{-1}\|c\|_{\ell^{1,\infty}}
\le
[F_{m,c}]_{\dot W_{-1}}
\le
C\|c\|_{\ell^{1,\infty}}
$$
and
$$
\|\nabla F_{m,c}\|_{L^1(\mathbb R^N;\mathbb R^N)}
\le
C\|c\|_{\ell^{1,\infty}}.
$$
\end{proposition}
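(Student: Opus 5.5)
The plan is to deduce everything from the one-dimensional Proposition~\ref{prop:1d-blocks}, using that for $\gamma=-1$ the quantity $[\cdot]_{\dot W_{1,-1}}$ is dilation invariant. Lemma~\ref{lem:affine} with $\gamma=-1$ (the factor $r^{1+\gamma}$ equals $1$) gives
$$[v_{m,c}]_{\dot W_{1,-1}}=[u_{m,c}]_{\dot W_{1,-1}}\quad\text{and}\quad \|v_{m,c}'\|_{L^1(\mathbb R)}=\|u_{m,c}'\|_{L^1(\mathbb R)},$$
and clearly $\|v_{m,c}\|_{L^\infty(\mathbb R)}=\|u_{m,c}\|_{L^\infty(\mathbb R)}$. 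Next I locate the support. Since $\operatorname{supp}U_{m,c}\subset[2,\,m(L_m+2)+1]$, we have $\operatorname{supp}u_{m,c}\subset[e^2,e^{m(L_m+2)+1}]$, and so by the choice of $r_m$,
$$\operatorname{supp}v_{m,c}\subset\left[\frac{e^2}{r_m},\frac14\right]\subset\left(0,\frac14\right].$$
The gradient bound is then immediate:
$$\|\partial_1F_{m,c}\|_{L^1}\le\|v_{m,c}'\|_{L^1}\|\psi\|_{L^1},\qquad \|\nabla_{x'}F_{m,c}\|_{L^1}\le\tfrac14\|v_{m,c}\|_{L^\infty}\|\nabla\psi\|_{L^1}.$$
Both are at most $C\|c\|_{\ell^{1,\infty}}$ by Proposition~\ref{prop:1d-blocks}.

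For the upper bound on $[F_{m,c}]_{\dot W_{-1}}$ I would repeat the argument of Lemma~\ref{lem:product-W}, keeping track of constants. Write $F(x+h)-F(x)=A+B$ with
$$A:=\psi(x'+h')[v(x_1+h_1)-v(x_1)],\qquad B:=v(x_1)[\psi(x'+h')-\psi(x')].$$
For the $A$ term, $x'+h'\in K:=\operatorname{supp}\psi$ and $\int_{\mathbb R^{N-1}}|(h_1,h')|^{-1-N}\,dh'=C|h_1|^{-2}$. This yields
$$\sup_{\lambda}\lambda\,\nu_{-1}(\{|A|>\lambda/2\})\lesssim|K|\,[v_{m,c}]_{\dot W_{1,-1}}\le 4C\|c\|_{\ell^{1,\infty}}.$$
For the $B$ term, $|B|\le\|\nabla\psi\|_{L^\infty}\|v\|_{L^\infty}|h|$. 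Hence $|B|>\lambda/2$ forces $x_1\in(0,\frac14]$, one of $x'$ and $x'+h'$ to lie in $K$, and $|h|>\lambda/(2\|\nabla\psi\|_{L^\infty}\|v\|_{L^\infty})$. Polar coordinates then give $\lambda\,\nu_{-1}(\{|B|>\lambda/2\})\lesssim\|v\|_{L^\infty}\le\frac14\|c\|_{\ell^{1,\infty}}$, with a constant depending only on $N$ and $\psi$.

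The lower bound is the delicate step, because the one-dimensional lower bound in Proposition~\ref{prop:1d-blocks} comes from the mixed region $x\le0<y$, where $x$ ranges over all of $(-\infty,0]$. In the $N$-dimensional problem I can only use pairs where the cutoff $\psi$ equals $1$ at both points.
\begin{itemize}
\item I restrict to $x'\in B_{N-1}(\mathbf 0,\frac12)$ and $|y'-x'|<\min\{|x_1-y_1|,\frac12\}$. There $\psi(x')=\psi(y')=1$, $F(x)-F(y)=v(x_1)-v(y_1)$, and $|x-y|\le\sqrt2|x_1-y_1|$.
\item Restricting further to $|x_1-y_1|\le\frac12$, the $y'$-integral of $|x-y|^{-1-N}$ is $\gtrsim|x_1-y_1|^{-2}$. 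So $\nu_{-1}(E_{\lambda,-1}[F])$ is bounded below by a constant times the one-dimensional measure $\iint|x_1-y_1|^{-2}\,dx_1\,dy_1$, taken over the level set of $v$ intersected with $\{|x_1-y_1|\le\frac12\}$.
\item I keep only the pairs with $x_1\in[-\frac14,0]$ and $y_1\in(0,\frac14]$. Since $y_1<\frac14$,
$$\int_{-1/4}^0\frac{dx_1}{(y_1-x_1)^2}=\frac1{y_1}-\frac1{y_1+\frac14}\ge\frac1{2y_1}.$$
\item Running the computation \eqref{2003-1} with this truncation, together with the scaling $t=\log(r_my_1)$, gives $\lambda\,\nu_{-1}(E_{\lambda,-1}[F])\gtrsim\lambda\,|\{t:|U_{m,c}(t)|>\lambda\}|$.
\end{itemize}
Taking the supremum over $\lambda$ and applying Lemma~\ref{lem:tail-block} gives $[F_{m,c}]_{\dot W_{-1}}\gtrsim\|U_{m,c}\|_{L^{1,\infty}}\ge\frac12\|c\|_{\ell^{1,\infty}}$, which completes the proof.
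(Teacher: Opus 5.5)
Your proposal is correct and follows essentially the same route as the paper. For the upper bound you use the same splitting into $A+B$, with the $B$-term controlled by the Lipschitz bound on $\psi$, the support property $\operatorname{supp}v_{m,c}\subset(0,\frac14]$, and polar coordinates; for the lower bound you restrict to pairs where $\psi$ equals $1$ at both points and one first coordinate is nonpositive, which recovers the measure $dy_1/y_1$ and hence $\|U_{m,c}\|_{L^{1,\infty}}$. The differences are only cosmetic: you take $x_1\in[-\frac14,0]$ with $|y'-x'|<|x_1-y_1|$ and evaluate the $x_1$-integral exactly, whereas the paper takes $y_1\in[-x_1,0]$ with $|x'-y'|<\frac{x_1}{2}$ and uses $|x-y|\sim x_1$.
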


\begin{proof}
For $x=(x_1,x')$ and $y=(y_1,y')$, we write
$$
F_{m,c}(x)-F_{m,c}(y)
=
[v_{m,c}(x_1)-v_{m,c}(y_1)]\psi(x')
+
v_{m,c}(y_1)[\psi(x')-\psi(y')]
=:A(x,y)+B(x,y).
$$
By the quasi-triangle inequality of the $L^{1,\infty}$ quasi-norm,
it suffices to estimate the above last two terms separately.

We first estimate the contribution of $A$. Let
$K:=\operatorname{supp}\psi$. If $|A(x,y)|>\lambda$, then
$x'\in K$ and
$|v_{m,c}(x_1)-v_{m,c}(y_1)|>\lambda.$
Therefore, by Fubini's theorem and the change of variables
$z=(x'-y')/|x_1-y_1|$, we obtain
\begin{align}\label{21090}
\nu_{-1}(\{|A|>\lambda\})
&\le
\iint_{\{|v_{m,c}(x_1)-v_{m,c}(y_1)|>\lambda\}}
\int_K\int_{\mathbb R^{N-1}}
\frac{dy'\,dx'}{
\bigl(|x_1-y_1|^2+|x'-y'|^2\bigr)^{\frac{N+1}{2}}
}
\,dx_1\,dy_1
\nonumber\\
&\lesssim
\iint_{\{|v_{m,c}(x_1)-v_{m,c}(y_1)|>\lambda\}}
\frac{dx_1\,dy_1}{|x_1-y_1|^2}.
\end{align}
Moreover, from Lemma \ref{lem:affine}, we infer that
$[v_{m,c}]_{\dot W_{1,-1}}
=
[u_{m,c}]_{\dot W_{1,-1}}$, which,
together with \eqref{21090} and Proposition \ref{prop:1d-blocks},
further implies that
\[
\sup_{\lambda\in(0,\infty)}
\lambda\nu_{-1}(\{|A|>\lambda\})
\lesssim
[v_{m,c}]_{\dot W_{1,-1}}
=
[u_{m,c}]_{\dot W_{1,-1}}
\lesssim
\|c\|_{\ell^{1,\infty}}.
\]

Next, we estimate the contribution of $B$.
By the definition of the blocks, $\operatorname{supp} U_{m,c}\subset[2,m(L_m+2)+1]$.
Thus, from $u_{m,c}(x)=U_{m,c}(\log x)$ and
$v_{m,c}(x)=u_{m,c}(r_mx)$ with
$r_m=4e^{m(L_m+2)+1}$, it follows
\begin{align}\label{21450}
\operatorname{supp} v_{m,c}\subset[0,\frac14].
\end{align}
Let $L_\psi$ be the Lipschitz constant of $\psi$ and
$S:=\operatorname{supp}v_{m,c}\subset[0,\frac14]$. If $|B(x,y)|>\lambda$, then
$y_1\in S$, at least one of $x'$ and $y'$ belongs to $K$, and
\[
|x'-y'|>
a_\lambda
:=
\frac{\lambda}{
L_\psi\|v_{m,c}\|_{L^\infty(\mathbb R)}
}.
\]
By Fubini's theorem, symmetry, and polar
coordinates, we obtain
\begin{align*}
\nu_{-1}(\{|B|>\lambda\})
&\lesssim
\int_S\int_K
\int_{\{|x'-y'|>a_\lambda\}}
\int_{\mathbb R}
\frac{dx_1\,dy'\,dx'\,dy_1}{
\bigl(|x_1-y_1|^2+|x'-y'|^2\bigr)^{\frac{N+1}{2}}
}
\\
&\lesssim
\int_K\int_{\{|x'-y'|>a_\lambda\}}
\frac{dy'\,dx'}{|x'-y'|^N}
\lesssim
a_\lambda^{-1}.
\end{align*}
From this and
Proposition~\ref{prop:1d-blocks}, we deduce that
\[
\sup_{\lambda\in(0,\infty)}
\lambda\nu_{-1}(\{|B|>\lambda\})
\lesssim
\|v_{m,c}\|_{L^\infty(\mathbb R)}
=
\|u_{m,c}\|_{L^\infty(\mathbb R)}
\lesssim
\|c\|_{\ell^{1,\infty}}.
\]
Combining the estimates of these two terms, we obtain
\begin{align*}
[F_{m,c}]_{\dot W_{-1}}
\lesssim
\|c\|_{\ell^{1,\infty}}.
\end{align*}

For the lower bound, let, for any $\lambda\in(0,\infty)$,
$$
\begin{aligned}
\Omega_\lambda
:=
\Big\{(x_1,x',y_1,y'):\,
&x_1\in(0,\tfrac14],\quad |v_{m,c}(x_1)|>\lambda,\quad
y_1\in[-x_1,0],\\
&x'\in B_{N-1}(0,\tfrac12),\quad
|x'-y'|<\tfrac{x_1}{2}
\Big\}.
\end{aligned}
$$
For any $(x_1,x',y_1,y')\in\Omega_\lambda$, we have
$\psi(x')=\psi(y')=1$ and $v_{m,c}(y_1)=0$, and hence
\begin{align}\label{933}
\Omega_\lambda\subset\{|F_{m,c}(x)-F_{m,c}(y)|>\lambda\}.
\end{align}
Moreover, on $\Omega_\lambda$,
$x_1\le x_1-y_1\le2x_1$ and $|x'-y'|<\frac{x_1}{2}$, which further implies that
$|x-y|\lesssim x_1$. From this, it follows that
\begin{align*}
\nu_{-1}(\Omega_\lambda)
&\gtrsim
\int_{\{x_1>0:\,|v_{m,c}(x_1)|>\lambda\}}
\int_{-x_1}^{0}
\int_{B_{N-1}(0,\frac12)}
\int_{|x'-y'|<\frac{x_1}{2}}
\frac{dy'\,dx'\,dy_1\,dx_1}{x_1^{N+1}}\\
&\gtrsim
\int_{\{x_1>0:\,|v_{m,c}(x_1)|>\lambda\}}
\frac{dx_1}{x_1},
\end{align*}
which, together with \eqref{933}, further implies that
$$
[F_{m,c}]_{\dot W_{-1}}
\gtrsim
\sup_{\lambda\in (0,\infty)}
\lambda
\int_{\{x_1>0:\,|v_{m,c}(x_1)|>\lambda\}}
\frac{dx_1}{x_1}.
$$
By the definition of $v_{m,c}$ and the change of variables
$s=\log(r_mx_1)$, we have
$$
\sup_{\lambda\in (0,\infty)}\lambda
\int_{\{x_1>0:\,|v_{m,c}(x_1)|>\lambda\}}
\frac{dx_1}{x_1}
=
\sup_{\lambda\in (0,\infty)}\lambda
\left|\{s\in\mathbb R:\,|U_{m,c}(s)|>\lambda\}\right|
=
\|U_{m,c}\|_{L^{1,\infty}(\mathbb R)}.
$$
From this and Lemma~\ref{lem:tail-block}, it follows that
$$
[F_{m,c}]_{\dot W_{-1}}
\gtrsim
\|c\|_{\ell^{1,\infty}}.
$$

Finally, by \eqref{21450}, we conclude that
$$
\|v_{m,c}\|_{L^1(\mathbb R)}
\le
\frac14\|v_{m,c}\|_{L^\infty(\mathbb R)}
=
\frac14\|u_{m,c}\|_{L^\infty(\mathbb R)}.
$$
This, together with the definitions of both $\psi$ and $v_{m,c}$ and Proposition~\ref{prop:1d-blocks}, gives
\begin{align*}
\|\nabla F_{m,c}\|_{L^1(\mathbb R^N;\mathbb R^N)}
&\le\|\psi\|_{L^1(\mathbb R^{N-1})}\|v_{m,c}'\|_{L^1(\mathbb R)}
+\|\nabla\psi\|_{L^1(\mathbb R^{N-1};\mathbb R^{N-1})}\|v_{m,c}\|_{L^1(\mathbb R)}\\
&\lesssim\|u_{m,c}'\|_{L^1(\mathbb R)}+\|u_{m,c}\|_{L^\infty(\mathbb R)}\lesssim
\|c\|_{\ell^{1,\infty}},
\end{align*}
which completes the proof of Proposition \ref{lem:higher-blocks}.
\end{proof}

Using Propositions \ref{prop:1d-blocks} and
\ref{lem:higher-blocks}, we immediately obtain the following
corollary.

\begin{corollary}\label{cor:endpoint-blocks}
For any $N,m\in\mathbb N$, there exist positive constants $a_N$ and $b_N$ and a linear map
$$
J_m:\mathbb R^m\to C_{\mathrm c}^\infty(\mathbb R^N)
$$
such that, for any $c\in\mathbb R^m$,
$$
a_N\,\|c\|_{\ell^{1,\infty}}\le \|J_m c\|_{\dot{W}^{1,1}(-1)}\le b_N\,\|c\|_{\ell^{1,\infty}}
$$
and
$$
a_N\,\|c\|_{\ell^{1,\infty}}\le \|J_m c\|_{\dot{BV}(-1)}\le b_N\,\|c\|_{\ell^{1,\infty}}.
$$
\end{corollary}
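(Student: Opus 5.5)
We take $J_m c:=u_{m,c}$ when $N=1$ and $J_m c:=F_{m,c}$ when $N\ge2$, with $u_{m,c}$, $F_{m,c}$ as constructed above. Linearity in $c$ is immediate: $U_{m,c}=\sum_j c_j\Phi_{m,j}$ is linear in $c$, and so are composition with $\log$, the dilation $x_1\mapsto r_mx_1$, and multiplication by $\psi(x')$.

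For smoothness we check the behaviour near the origin. Since $\operatorname{supp}U_{m,c}\subset[2,m(L_m+2)+1]$, the function $u_{m,c}$ vanishes on $(-\infty,e^2)$ and outside a compact set. On $(0,\infty)$ it equals $U_{m,c}\circ\log$, which is $C^\infty$ because $\rho\in C^\infty$. Hence $u_{m,c}\in C_{\mathrm c}^\infty(\mathbb R)$. The same then holds for $v_{m,c}$ and for $F_{m,c}=v_{m,c}\otimes\psi$, so $J_m$ indeed maps into $C_{\mathrm c}^\infty(\mathbb R^N)$. We also choose $\rho$ non-decreasing, as the paper fixed.

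For the norm bounds, first note that every element of the image is $C_{\mathrm c}^\infty$, so $\nabla(J_mc)\in L^1$ and $\|\nabla J_mc\|_{\mathcal M}=\|\nabla J_mc\|_{L^1}$. Hence the $\dot{W}^{1,1}(-1)$ and $\dot{BV}(-1)$ quasi-seminorms coincide on the image, and it suffices to treat $\dot{W}^{1,1}(-1)$.

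For $N=1$, Proposition~\ref{prop:1d-blocks} gives
$$
\|c\|_{\ell^{1,\infty}}\le[u_{m,c}]_{\dot W_{1,-1}}\le\|u_{m,c}\|_{\dot{W}^{1,1}_1(-1)}=\|u_{m,c}'\|_{L^1}+[u_{m,c}]_{\dot W_{1,-1}}\le\left(\tfrac14+4\right)\|c\|_{\ell^{1,\infty}},
$$
so we may take $a_1=1$ and $b_1=\frac{17}4$. For $N\ge2$, Proposition~\ref{lem:higher-blocks} gives
$$
C^{-1}\|c\|_{\ell^{1,\infty}}\le[F_{m,c}]_{\dot W_{-1}}\le\|F_{m,c}\|_{\dot{W}^{1,1}(-1)}\le 2C\|c\|_{\ell^{1,\infty}},
$$
with $C$ depending only on $N$ and $\psi$ and not on $m$. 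We then set $a_N:=C^{-1}$ and $b_N:=2C$.

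The argument is a direct assembly of Propositions~\ref{prop:1d-blocks} and \ref{lem:higher-blocks}. The only points requiring care are the smoothness of $u_{m,c}$ at $x=0$, which follows from its support lying in $[e^2,\infty)$, and the uniformity of $a_N$ and $b_N$ in $m$, which is built into both propositions.
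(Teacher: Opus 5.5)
Your proposal is correct and is essentially the paper's argument: the paper derives the corollary directly from Propositions~\ref{prop:1d-blocks} and \ref{lem:higher-blocks}, taking $J_mc=u_{m,c}$ for $N=1$ and $J_mc=F_{m,c}$ for $N\ge2$, exactly as you do. You also spell out details the paper leaves implicit: that $u_{m,c}$ vanishes on $(-\infty,e^2)$, so it is smooth at the origin; that the $\dot{W}^{1,1}(-1)$ and $\dot{BV}(-1)$ quasi-seminorms coincide on the image; and that the constants are uniform in $m$.
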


\begin{theorem}\label{thm:nonnorm-endpoint}
Neither $\dot{W}^{1,1}(-1)/\mathbb R$ nor $\dot{BV}(-1)/\mathbb R$ is normable.
\end{theorem}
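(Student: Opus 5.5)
The proof will be by contradiction, using Lemma~\ref{lem:normability-criterion} together with the finite-dimensional weak $\ell^{1,\infty}$ blocks of Corollary~\ref{cor:endpoint-blocks}. Suppose $\dot{W}^{1,1}(-1)/\mathbb R$ is normable. Then there is a constant $C$ such that \eqref{2133} holds for $q=\|\cdot\|_{\dot W^{1,1}(-1)/\mathbb R}$.

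Fix $m\in\mathbb N$ and let $e_1,\ldots,e_m$ be the standard basis of $\mathbb R^m$. Let $\mathbf 1:=(1,\ldots,1)=\sum_{j=1}^m e_j$.

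Because $J_m$ is linear, $J_m\mathbf 1=\sum_{j=1}^m J_m e_j$. Applying \eqref{2133} with $x_j:=J_m e_j+\mathbb R$ and then Corollary~\ref{cor:endpoint-blocks} gives
$$
a_N\|\mathbf 1\|_{\ell^{1,\infty}}
\le\|J_m\mathbf 1\|_{\dot W^{1,1}(-1)}
\le C\sum_{j=1}^m\|J_m e_j\|_{\dot W^{1,1}(-1)}
\le Cb_N\sum_{j=1}^m\|e_j\|_{\ell^{1,\infty}}.
$$
We evaluate both weak norms using $\|c\|_{\ell^{1,\infty}}=\max_k kc_k^*$. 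For $c=e_j$ this gives $\|e_j\|_{\ell^{1,\infty}}=1$. For $c=\mathbf 1$ it gives $\|\mathbf 1\|_{\ell^{1,\infty}}=m$. The display therefore reads $a_Nm\le Cb_Nm$, which is not a contradiction.

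This is the step that needs care. The naive all-ones test vector is useless, and we must pick vectors for which $\ell^{1,\infty}$ badly fails the triangle inequality. The standard choice is $c:=(1,\tfrac12,\ldots,\tfrac1m)$. Then $\|c\|_{\ell^{1,\infty}}=1$, and we split it into cyclic permutations. Let $\sigma$ be the cyclic shift on $\{1,\ldots,m\}$ and set $c^{(i)}:=c\circ\sigma^i$ for $i=0,\ldots,m-1$. Each $c^{(i)}$ is a rearrangement of $c$, so $\|c^{(i)}\|_{\ell^{1,\infty}}=1$. Their sum is $\sum_{i}c^{(i)}=H_m\mathbf 1$, whose weak norm is $mH_m$.

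Linearity of $J_m$, \eqref{2133}, and Corollary~\ref{cor:endpoint-blocks} then give
$$
a_N\,mH_m\le\Big\|J_m\Big(\sum_{i=0}^{m-1}c^{(i)}\Big)\Big\|_{\dot W^{1,1}(-1)}
\le C\sum_{i=0}^{m-1}\|J_mc^{(i)}\|_{\dot W^{1,1}(-1)}\le Cb_N\,m.
$$
Hence $H_m\le Cb_N/a_N$ for every $m$. Since $H_m\sim\log m\to\infty$, this is a contradiction. The essential point is that the constants $a_N,b_N$ in Corollary~\ref{cor:endpoint-blocks} do not depend on $m$.

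For $\dot{BV}(-1)/\mathbb R$ we repeat the same argument with the second pair of inequalities in Corollary~\ref{cor:endpoint-blocks}. Alternatively, we can argue as in Theorem~\ref{thm:nonnorm-open}: $\|u\|_{\dot{BV}(-1)}=\|u\|_{\dot W^{1,1}(-1)}$ on $\dot W^{1,1}(-1)$, so a norm equivalent to the $\dot{BV}(-1)$ quasi-norm would restrict to a norm on $\dot W^{1,1}(-1)/\mathbb R$, contradicting the first part.
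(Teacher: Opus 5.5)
Your proof is correct and essentially the paper's: the same test vector $h_m=(1,\frac12,\ldots,\frac1m)$, the same use of Lemma~\ref{lem:normability-criterion} with Corollary~\ref{cor:endpoint-blocks}, and the same $\dot{BV}(-1)$ treatment. The only difference is that the paper averages over all $m!$ permutations, which also uses homogeneity of the quasi-norm to pull out $\frac1{m!}$, whereas you sum $m$ cyclic shifts directly and reach the same contradiction $H_m\lesssim 1$.
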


\begin{proof}
Let $\mathcal X(-1)\in\{\dot{W}^{1,1}(-1),\dot{BV}(-1)\}$. Assume
that $\mathcal X(-1)/\mathbb R$ is normable.
For $m\in\mathbb N$, let $J_m$ be the same as in Corollary~\ref{cor:endpoint-blocks} and
$h_m:=(1,\frac12,\ldots,\frac1m)\in\mathbb R^m.$
Let $\mathfrak S_m$ denote the set of all permutations of $h_m$
and $H_m:=\sum_{k=1}^m\frac1k$. Then we have
$$
\frac1{m!}\sum_{\sigma\in\mathfrak S_m}\sigma(h_m)
=
\left(\frac{H_m}{m},\ldots,\frac{H_m}{m}\right)
=:z_m.
$$
From the definition of $\mathfrak S_m$, we infer that, for any $\sigma\in\mathfrak S_m$,
$\|\sigma(h_m)\|_{\ell^{1,\infty}}=\|h_m\|_{\ell^{1,\infty}}=1$.
By this, the linearity of $J_m$, the assumption that $\mathcal X(-1)/\mathbb R$ is normable,
Lemma \ref{lem:normability-criterion}, and Corollary \ref{cor:endpoint-blocks},
we find that
$$
\begin{aligned}
\|J_mz_m\|_{\mathcal X(-1)}
&=
\left\|
\frac1{m!}\sum_{\sigma\in\mathfrak S_m}J_m(\sigma(h_m))
\right\|_{\mathcal X(-1)}\\
&\lesssim
\frac{1}{m!}\sum_{\sigma\in\mathfrak S_m}
\|J_m(\sigma(h_m))\|_{\mathcal X(-1)}
\sim1.
\end{aligned}
$$
On the other hand, Corollary~\ref{cor:endpoint-blocks} and
$\|z_m\|_{\ell^{1,\infty}}=H_m$ imply that
$$
\|J_mz_m\|_{\mathcal X(-1)}
\gtrsim
H_m\to\infty
$$
as $m\to\infty$, which is a contradiction.
This finishes the proof of Theorem \ref{thm:nonnorm-endpoint}.
\end{proof}

\begin{proof}[Proof of Theorem~\ref{thm:main-non-normability}]
The case $\gamma\in(-1,0)$ follows from Theorem~\ref{thm:nonnorm-open},
while the endpoint case $\gamma=-1$ follows from Theorem~\ref{thm:nonnorm-endpoint}.
\end{proof}

\smallskip

\noindent\textbf{Acknowledgements}\quad
The authors acknowledge the use of AI tools 
during the exploratory stage of this project.
All mathematical arguments and proofs in the 
final manuscript were checked
and written by the authors.
We should also mention that the first version of our article was
submitted to arXiv on July 19, 2026, and then this article
was on hold by arXiv till September 17, 2026. During the period
when it was on hold, on August 13 we submitted a revised version
to arXiv by adding the above acknowledgement to AI.

\bigskip

\noindent Yiqun Chen, Dachun Yang, Wen Yuan and
Yangyang Zhang (Corresponding author)

\medskip

\noindent Laboratory of Mathematics and Complex Systems
(Ministry of Education of China),
School of Mathematical Sciences, Institute for Advanced Study,
Beijing Normal University, Beijing 100875,
The People's Republic of China

\smallskip

\noindent{\it E-mails:} \texttt{yiqunchen@mail.bnu.edu.cn} (Y. Chen)

\noindent\phantom{{\it E-mails:} }\texttt{dcyang@bnu.edu.cn} (D. Yang)

\noindent\phantom{{\it E-mails:} }\texttt{wenyuan@bnu.edu.cn} (W. Yuan)

\noindent\phantom{{\it E-mails:} }\texttt{yangyzhang@bnu.edu.cn} (Y. Zhang)

\end{document}